\theoremstyle{plain}
\newtheorem{theorem}{Theorem}[section]
\newtheorem{lemme}[theorem]{Lemma}
\newtheorem{prop}[theorem]{Proposition}
\newtheorem{trick}[theorem]{Trick} 
\theoremstyle{remark}
\newtheorem{remark}[theorem]{Remark}
\newtheorem*{assumption}{Assumption}
\def\si{\mathbf{s}}
\def\cA{\mathcal{A}}
\def\cC{\mathcal{C}}
\def\cD{\mathcal{D}}
\def\cF{\mathcal{F}}
\def\cN{\mathcal{N}}
\def\cX{\mathcal{X}}
\def \R{\mathbb{R}}
\def \1{\mathds{1}}
\newcommand{\E}{\operatorname{\rm{E}}}
\renewcommand{\P}{\operatorname{\rm{P}}}
\newcommand{\Var}{\operatorname{Var}}
\newcommand{\Cov}{\operatorname{Cov}}
\begin{document}

\begin{frontmatter}
\title{Mean number and correlation function of critical points of isotropic Gaussian fields and some results on GOE random matrices}
%\title{A sample article title with some additional note\thanksref{t1}}
\runtitle{Mean number and correlation  of critical points}
%\thankstext{T1}{A sample additional note to the title.}

\begin{aug}
%%%%%%%%%%%%%%%%%%%%%%%%%%%%%%%%%%%%%%%%%%%%%%
%%Only one address is permitted per author. %%
%%Only division, organization and e-mail is %%
%%included in the address.                  %%
%%Additional information can be included in %%
%%the Acknowledgments section if necessary. %%
%%%%%%%%%%%%%%%%%%%%%%%%%%%%%%%%%%%%%%%%%%%%%%
\author[A]{\fnms{Jean-Marc} \snm{Aza\"is}\ead[label=e1]{jean-marc.azais@math.univ-toulouse.fr}}
\and
\author[B]{\fnms{C\'eline} \snm{Delmas}\ead[label=e2]{celine.delmas.toulouse@inrae.fr}}

%%%%%%%%%%%%%%%%%%%%%%%%%%%%%%%%%%%%%%%%%%%%%%
%% Addresses                                %%
%%%%%%%%%%%%%%%%%%%%%%%%%%%%%%%%%%%%%%%%%%%%%%

\address[A]{Institut de Math\'ematiques de Toulouse,
Universit\'e Paul Sabatier,
\printead{e1}}

\address[B]{Math\'ematiques et Informatique Appliqu\'ees Toulouse UR875,
INRAE,
\printead{e2}}

\end{aug}

\begin{abstract}
Let $\mathcal{X}= \{X(t) : t \in  \mathbb{R}^N \} $ be an isotropic Gaussian random field  with real values.
In a first part we study the mean number of critical points of $\mathcal{X}$ with index $k$ using random matrices tools.
We obtain an exact expression for the probability density of the $k$th eigenvalue of a $N$-GOE matrix.
We deduce some exact expressions for the mean number of critical points with a given index. 
In a second part we study  attraction or repulsion between these critical points. A measure is the correlation function.
We prove attraction between critical points when $N>2$, neutrality for $N=2$ and repulsion for $N=1$.
The attraction between critical points that occurs when the dimension is greater than two is due to  critical points with adjacent indexes.
A strong repulsion between maxima and minima is proved. The correlation function between maxima (or minima) depends on the dimension of the ambient space.
\end{abstract}

\begin{keyword}[class=MSC2020]
\kwd[Primary ]{60G60}
\kwd[; secondary ]{60B20}
\end{keyword}

\begin{keyword}
\kwd{Critical points of Gaussian fields}
\kwd{GOE matrices}
\kwd{Kac-Rice formula}
\end{keyword}

\end{frontmatter}
%%%%%%%%%%%%%%%%%%%%%%%%%%%%%%%%%%%%%%%%%%%%%%
%% Please use \tableofcontents for articles %%
%% with 50 pages and more                   %%
%%%%%%%%%%%%%%%%%%%%%%%%%%%%%%%%%%%%%%%%%%%%%%
%\tableofcontents

\section{Introduction}\label{s:intro}
Critical points of random fields play an important role in small or large dimension.
In large dimension data they appear in the study of algorithms of maximisation of the likelihood \cite{benarous2, ros}.
In smaller dimension they play a role in many applications from various areas: detection of peaks in a random field \cite{cheng3, taylor, worsley1, worsley2},
optimisation of a response  modelled by a random field \cite{ginsbourger}, modelling of random sea waves \cite{longuet, Alo, rychlik}.
Critical points  and their indexes  are an important  element of the geometry of random fields.
They appear  in the computation of the Euler characteristic \cite{adlertaylor, cheng}.
They are a subject  of study  on their own as in  \cite{nicolaescu, benarous, BCW, cheng2}.

Let $\cX= \{X(t) : t \in  \R^N \} $ be an isotropic Gaussian random field  with real values.
If we look at the repartition of the critical points of $\cX$ as a function of their index in dimension two,
considerations of symmetry and of Euler characteristic   (see, for example, \cite{adler}, lemma 6.1.1) imply  that if $\mathcal N^c(S)$, $\mathcal N^c_{0}(S)$, $\mathcal N^c_{2}(S)$, $\mathcal N^c_{1}(S)$  are respectively
the number  of  critical, minimum, maximum and saddle points  on a  given set $S$
\begin{equation}\label{e:adler}
\E(\mathcal N^c_{0}(S)) = \E(\mathcal N^c_{2}(S)) = \frac 1 2 \E(\mathcal N^c_{1}(S)) =  \frac 1 4 \E(\mathcal N^c(S)).
\end{equation}
In higher dimensions simulations suggest that such a simple result does not hold true. % (see Figure \ref{figure1})  *** on a enlevé***. 
 The purpose  of Section \ref{s:mean} is to compute these expectations using  random matrices tools.
With this objective in mind, we obtain an exact expression for the probability density of the $k$th ordered eigenvalue
of a $N$-GOE matrix (see Theorem \ref{lemmedensitevpk} and \eqref{eq:q22}--\eqref{eq:q55}). We deduce
exact expressions for the mean number of critical points with a given index (Propositions \ref{p:NPC}, \ref{prop:n=3} and \ref{prop:n=4}%--\ref{prop:n=2up}
). In particular, for $N=3$, if we denote by $\mathcal N^c_{0}(S)$,  $\mathcal N^c_{1}(S)$, $\mathcal N^c_{2}(S)$ and $\mathcal N^c_{3}(S)$ respectively the number of minimum,
the number of critical points with index $1$ and $2$ (see Section \ref{s:mean}, equation \eqref{nombre1} for the definition) and the number of maximum on a given set $S$, we obtain (see Proposition \ref{prop:n=3})
\begin{align*}
\E(\mathcal N^c_{0}(S))& = \E(\mathcal N^c_{3}(S)) = \frac{29-6\sqrt{6}}{116} \E(\mathcal N^c(S)),\\
\E(\mathcal N^c_{1}(S))& = \E(\mathcal N^c_{2}(S)) = \frac{29+6\sqrt{6}}{116} \E(\mathcal N^c(S)).
\end{align*}
Proposition \ref{prop:n=4} gives these expectations for $N=4$. 
%Proposition \ref{prop:n=2up} gives the mean numbers of critical points with index $k$ above a level for $N=2$ and $\lambda_4-3\lambda_2^2=0$.

On the other hand Section \ref{s:process} studies how  the critical points are spread  in the space.
In a pioneering work, Belyaev, Cammarota  and Wigman \cite{BCW} study  the attraction or repulsion  (see Section \ref{s:attractionrepulsion} for a precise definition)  of the point process  of critical points of  a particular random wave model in  dimension 2.
In the case of random processes ($N=1$),  it was generally admitted  that repulsion between  crossings or critical points occurs.
In fact this result has never been written explicitly  so it is the object of Section \ref{s:1d} with Proposition \ref{p:crit}.
With  respect to this behaviour the result of \cite{BCW} is unexpected since  no repulsion occurs between critical points.  
The object of Section \ref{s:correlation}  is to explore the phenomenon  of attraction or repulsion for a large class of random fields,   in all dimensions  and for each type of indexes.
A precise definition of attraction and repulsion is given in Section \ref{s:attractionrepulsion}.  
Theorem \ref{t1} proves attraction between critical points when $N>2$, neutrality for $N=2$ and repulsion for $N=1$.
Theorem \ref{t2} shows that the attraction between critical points that occurs when the dimension is greater than $2$ is due to the attraction between critical points with adjacent indexes.
In Theorem \ref{theo3} we prove a strong repulsion, growing with $N$, between maxima and minima. Finally Theorem \ref{theo4} gives an upper bound to the correlation function between maxima
(or equivalently minima) proving repulsion for $N<5$.% and suggesting neutrality for $N=5$ and attraction for $N>5$. All these results generalise the results of \cite{BCW}.

In Appendix \ref{a:1}  we  prove the validity of all the Kac-Rice formulas used in the paper. 
Appendix \ref{s:proofs} is devoted %to the calculation of the conditional
%joint distribution of the Hessian matrices of $X(t)$ and $X(0)$ given the nullity of the gradients of $X(t)$ and $X(0)$
to the proofs of Lemmas  \ref{l:moments}, \ref{lemmeconditionaldistribution} and \ref{lemme2}.% giving some of its asymptotic equivalents when $t\rightarrow 0$.
 
\section{Notation, main assumptions and background}
\subsection{Notation}
\begin{itemize}
\item $\varphi(\cdot)$ and $\Phi(\cdot)$ are respectively the probability density and the cumulative distribution
 function of a standard Gaussian variable, $\bar{\Phi} (\cdot):=1-\Phi(\cdot)$.\\

\item If $X$ and $Y$ are random vectors,  $\Var(X)$ denotes the variance-covariance matrix of $X$ and
$$
\Cov(X,Y) := \E\Big( \big(X-\E(X)\big) \big(Y-\E(Y)\big)^\top\Big).
$$

\item For $X$ and $Y$ two random vectors $p_{X}(x)$ is the probability density of $X$ at $x$ and $p_{X}(x/Y=y)$ the probability density of $X$ conditionally to $Y=y$ when these densities exist.\\

\item For a random field  $ \cX= \{X(t) : t \in  \R^N \} $, we denote by $\nabla X(t)$ the gradient  of $X(t)$ and by $\nabla^2 X(t)$ its Hessian, when they exist.\\
 
\item $i(M)$ is the index of the matrix $M$: the number of its negative eigenvalues.\\

\item $z^+$ and $z^-$ denote respectively $\sup (0,z)$ and $-\inf(0,z)$.\\

\item $\displaystyle X_{i_1i_2\ldots i_n}(t)$ denotes $\displaystyle \frac{\partial^n X(t)}{\partial t_{i_1} \partial t_{i_2} \ldots \partial t_{i_n}}$.\\

\item $Id_n$ is the identity matrix of size $n$; $J_{n,p}$ the ($n\times p$)-matrix with all elements equal to $1$.\\
\item  $|S|$ is the Lebesgue measure of  the Borel set $S$.\\
\item  For a random field  $ \cX= \{X(t) : t \in  \R^N \} $ the number of critical points is precisely defined for every Borel set $S$ as
 \begin{equation*}
{\cN}^c(S):=\#\{t\in S : \nabla X(t)=0 \}.
\end{equation*}
\end{itemize}

\subsection{Main assumptions}
In the sequel we will use  the following assumptions (A$k$), with $ k = 2$, $3$ or $4$.

\begin{assumption}[A$k$]

 \begin{itemize}
\item $ \cX= \{X(t) : t \in  \R^N \} $ is a stationary and isotropic Gaussian field  with real values. We assume, without loss of generality, that it is centred  with variance 1. We set
$$
\E(X(s)X(t))=c(s,t)=\mathbf r(||s-t||^2).
$$

\item $\mathbf r$ is of class $\cC^k$.  This is equivalent to the existence of a finite $2k$th spectral moment $\lambda_{2k}$ and
it implies that $ \cX$ is $k$ times differentiable  in quadratic mean. Note that  $\forall \ell=1,\ldots, N$ and $n=1,\ldots, k$

\begin{align}
1&= \Var(X(t)) = \mathbf r(0), \nonumber \\
\lambda_{2n}&:=\Var\left(\frac{\partial^{n}X(t)}{\partial t_{\ell}^n}\right)= (-1)^n\frac{(2n)!}{n!}\mathbf r^{(n)}(0) . \label{spectralmoments}
\end{align}

\item To avoid the trivial case  of a constant random field, we assume that $ \lambda_2 = -2\mathbf r'(0)  >0$.\\

\item When $N>1$  we know from Lemma \ref{l:moments} below  that
$\displaystyle \lambda_{2n}\lambda_{2n-4}>\lambda_{2n-2}^2$ for $n=2,\ldots,k$. When $N=1$, we have to assume these relations to avoid the particular case of  the sine-cosine process.\\

\item When $k=2$ we have to assume, in addition for the validity of the Kac-Rice formula, that $\cX$ is $C^2$ which is  slightly stronger than the finiteness  of $\lambda_4$ .
When $k \geq 3$  the fact that  $\cX$ is $C^2$  is a consequence of  the finiteness  of $\lambda_6$.
\end{itemize}
\end{assumption}

Note that, of course,  (A4) is stronger than (A3) which is itself stronger than (A2).

\subsection{Background}
We give two lemmas that will be needed in the next sections.
The first one gives  universal properties for $\cX$. The second one is more technical.
\begin{lemme}\label{l:moments}
Let $\cX$ be a stationary and isotropic Gaussian field.
When $\lambda_{2k}$ is finite, the spectral moments satisfy for $n=2,\ldots, k$
\begin{equation}\label{eq0:lemme1}
\lambda_{2n}\lambda_{2n-4}\geq \frac{(2n-1)}{(2n-3)}\frac{(2n-4+N)}{(2n-2+N)} \lambda_{2n-2}^2.
\end{equation}
Moreover when $ i_1+j_1,\ldots,i_N+j_N$ are all even and when all the quantities hereunder are well defined
\begin{align}
\E\left( \frac{\partial^{|i|} X(t)}{\partial t_1^{i_1}, \ldots,\partial t_N^{i_N} }
  \frac {\partial^{|j|} X(t)}{\partial t_1^{j_1}, \ldots,\partial t_N^{j_N} }  \right)&=
    \left. \frac {\partial^{|i|+|j|} c(s,t)}{\partial s_1^{i_1}, \ldots,\partial s_N^{i_N}    \partial t_1^{j_1}, \ldots,\partial t_N^{j_N}   }\right|_{s=t} \nonumber\\
=  (-1)^{|j|}\mathbf r^{(|\beta|)}(0)\prod_{\ell=1}^N\frac{(2\beta_{\ell})!}{\beta_{\ell}!} 
  &=(-1)^{|\beta|+|j|}\lambda_{2|\beta|}\frac{|\beta|!}{(2|\beta|)!}\prod_{\ell=1}^N\frac{(2\beta_{\ell})!}{\beta_{\ell}!}, \label{eq2:lemme1}
\end{align}
where $\displaystyle |\beta|=\frac{|i|+|j|}{2}$ and $\displaystyle \beta_{\ell}=\frac{i_{\ell}+j_{\ell}}{2}$ for $\ell=1,\ldots,N$. In the other cases the covariances vanish.
\end{lemme}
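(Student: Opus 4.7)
The plan is to prove the derivative formula \eqref{eq2:lemme1} first by a direct Taylor expansion, and then deduce the inequality \eqref{eq0:lemme1} from a spectral representation plus Cauchy--Schwarz, with the dimensional factor coming from an explicit angular integral.

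For the derivative formula, I would set $u = s-t$ so that $c(s,t) = \mathbf r(\|u\|^2)$, and note that $\partial/\partial s_\ell = \partial/\partial u_\ell$ while $\partial/\partial t_\ell = -\partial/\partial u_\ell$. Hence
$$
\left.\frac{\partial^{|i|+|j|}\,c(s,t)}{\partial s_1^{i_1}\cdots\partial t_N^{j_N}}\right|_{s=t}
= (-1)^{|j|}\,\left.\prod_{\ell=1}^N \frac{\partial^{\,i_\ell+j_\ell}}{\partial u_\ell^{\,i_\ell+j_\ell}}\right|_{u=0}\!\mathbf r(\|u\|^2).
$$
If some $i_\ell+j_\ell$ is odd this vanishes by parity. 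Otherwise, writing $2\beta_\ell = i_\ell+j_\ell$ and expanding $\mathbf r(\|u\|^2) = \sum_m \mathbf r^{(m)}(0)\|u\|^{2m}/m!$, only the term $m=|\beta|$ survives after taking $2|\beta|$ derivatives and evaluating at $u=0$. Applying the multinomial expansion to $(u_1^2 + \cdots + u_N^2)^{|\beta|}$, the coefficient of $\prod_\ell u_\ell^{2\beta_\ell}$ equals $|\beta|!/\prod_\ell \beta_\ell!$, and the subsequent differentiation pulls down $\prod_\ell (2\beta_\ell)!$. This yields the first equality in \eqref{eq2:lemme1}, and substituting $\mathbf r^{(|\beta|)}(0)$ via \eqref{spectralmoments} gives the second.

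For the inequality, a naive Cauchy--Schwarz on the spectral measure would only produce $\lambda_{2n}\lambda_{2n-4} \geq \lambda_{2n-2}^2$ and miss the dimensional correction, so the key is to isolate the angular contribution. By Bochner's theorem and isotropy, the spectral measure $\mu$ decomposes in polar coordinates $\xi = \rho\,\omega$ as the product of a radial measure $\tilde\mu$ on $(0,\infty)$ and the uniform measure on $S^{N-1}$. Since $\lambda_{2n} = \int_{\R^N}\xi_1^{2n}\,d\mu(\xi)$, this yields
$$
\lambda_{2n} = \gamma_n(N)\int_0^\infty \rho^{2n}\,d\tilde\mu(\rho), \qquad \gamma_n(N) := \int_{S^{N-1}} \omega_1^{2n}\,d\sigma(\omega),
$$
with $\sigma$ the normalized uniform probability on $S^{N-1}$. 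A Beta-function computation gives
$$
\gamma_n(N) = \frac{(2n-1)!!}{N(N+2)\cdots(N+2n-2)},
$$
so that a telescoping of the ratios $\gamma_n/\gamma_{n-1} = (2n-1)/(N+2n-2)$ produces
$$
\frac{\gamma_n(N)\,\gamma_{n-2}(N)}{\gamma_{n-1}(N)^2} = \frac{(2n-1)(2n-4+N)}{(2n-3)(2n-2+N)}.
$$
Setting $M_{2k} := \int_0^\infty \rho^{2k}\,d\tilde\mu(\rho)$, Cauchy--Schwarz yields $M_{2n-2}^2 \leq M_{2n}M_{2n-4}$, and multiplying both sides by the angular ratio above recovers exactly \eqref{eq0:lemme1}.

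The main delicate step I anticipate is the bookkeeping for $\gamma_n(N)$, and verifying that its ratio simplifies to the precise factor $(2n-1)(2n-4+N)/[(2n-3)(2n-2+N)]$ announced in the lemma; the remaining parts are a routine multinomial computation and a standard Cauchy--Schwarz argument on $L^2(\tilde\mu)$.
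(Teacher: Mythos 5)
Your proposal is correct and follows essentially the same route as the paper: for the inequality it uses the identical polar decomposition of the spectral measure, the same angular moment $\mu_{2p}=(2p-1)!!/[N(N+2)\cdots(N+2p-2)]$ whose ratio produces the constant $K(n,N)$, and Cauchy--Schwarz on the radial moments. For \eqref{eq2:lemme1} the multinomial bookkeeping is the same as the paper's, only carried out on a direct Taylor expansion of $\mathbf r$ at $0$ instead of inside the Schoenberg--Yaglom spectral series; this is a cosmetic difference, with the small caveat that, since $\mathbf r$ need not be analytic, the expansion should be phrased as a finite Taylor polynomial of order $|\beta|$ with remainder rather than an infinite series.
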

\begin{remark}
Note that the coefficients $\displaystyle K(n,N):= \frac{(2n-1)}{(2n-3)}\frac{(2n-4+N)}{(2n-2+N)}$ are  greater than 1 when $N>1$ and take the value 1 when $N=1$.
\end{remark}
The proof of Lemma \ref{l:moments} is given in Appendix \ref{s:proofs}.\\

 In the sequel  we need  a precise description of the distribution of $X(t), \nabla X(t), \nabla^2 X(t)$
and, for technical reasons  in the last section, we need to add $X_{1ii}, X_{11\ell} , X_{1111} $ for $1\leq i \leq N$ and $2\leq \ell \leq N$.
To get independence we have to partition  this vector as follows. Set 
  \begin{align*}
\zeta_1&:=\left(X_2(t),\ldots,X_{\ell -1}(t),X_{\ell+1}(t),X_N(t)\right)^\top ,\\
\zeta_2&:=\left(X_{ij}(t), i,j=1,\ldots,N\mbox{ and }i\neq j\right)^\top ,\\
\zeta_3&:=\left(X(t),X_{1111}(t),X_{11}(t),X_{22}(t),X_{33}(t),\ldots,X_{NN}(t)\right)^\top ,\\
\zeta_4&:=\left(X_1(t),X_{111}(t),X_{122}(t),X_{133}(t),\ldots,X_{1NN}(t)\right)^\top ,\\
\zeta_5&:=\left(X_{\ell}(t),X_{11\ell}(t)\right)^\top.
\end{align*}

\begin{lemme}\label{lemme1} Under Assumption (A$4$)
the random vectors $\zeta_1$, $\zeta_2$, $\zeta_3$, $\zeta_4$ and $\zeta_5$   defined above are Gaussian, independent, centred with variance matrices respectively given by
$$
\Var \left( \zeta_1 \right)=\lambda_2 Id_{N-2},\mbox{ }
\Var \left( \zeta_2 \right)=\frac{\lambda_4}{3} Id_{N(N-1)/2}, \mbox{ }
\Var \left( \zeta_3 \right)=M_{(N+2)}, 
$$
$$
\Var \left( \zeta_4 \right)=\tilde M_{(N+1)},\mbox{ }
\Var \left(\zeta_5\right)=\left(
\begin{array}{cc}
\lambda_2 & -\lambda_4/3\\
-\lambda_4/3 & \lambda_6/5
\end{array}
\right),
$$
where    
$$
M_{(N+2)}=\left(
\begin{array}{cccc}
1 & \lambda_4 & -\lambda_2 & \begin{array}{ccc} -\lambda_2 & \hdots & -\lambda_2 \end{array}\\
\lambda_4 & \lambda_8 & -\lambda_6 & \begin{array}{ccc} -\lambda_6/5 & \hdots & -\lambda_6/5 \end{array}\\
 -\lambda_2 & -\lambda_6 & \lambda_4 & \begin{array}{ccc} \lambda_4/3 & \hdots & \lambda_4/3 \end{array}\\
\begin{array}{c} -\lambda_2 \\ \vdots \\ -\lambda_2 \end{array}& \begin{array}{c} -\lambda_6/5 \\ \vdots \\ -\lambda_6/5 \end{array} &  \begin{array}{c} \lambda_4/3 \\ \vdots \\ \lambda_4/3 \end{array} & \displaystyle \left(\frac{2\lambda_4}{3}Id_{N-1}+\frac{\lambda_4}{3}J_{N-1}\right)
\end{array}
\right)
$$
and 
$$
\tilde{M}_{(N+1)}=\left(
\begin{array}{ccc}
 \lambda_2 & -\lambda_4 & \begin{array}{ccc} -\lambda_4/3 & \hdots & -\lambda_4/3 \end{array}\\
 -\lambda_4 & \lambda_6 & \begin{array}{ccc} \lambda_6/5 & \hdots & \lambda_6/5 \end{array}\\
 \begin{array}{c} -\lambda_4/3 \\ \vdots \\ -\lambda_4/3 \end{array} &  \begin{array}{c} \lambda_6/5 \\ \vdots \\ \lambda_6/5 \end{array} & \displaystyle \left(\frac{2\lambda_6}{15}Id_{N-1}+\frac{\lambda_6}{15}J_{N-1}\right)
\end{array}
\right).
$$
Moreover we have
\begin{equation} \label{equation:determinant1}
 \det \left(\Var\left(X_{11}(t),X_{22}(t),\ldots, X_{NN}(t)\right)\right)=(N+2)2^{N-1}\left(\frac{\lambda_4}{3}\right)^N,
\end{equation}
\begin{equation} \label{equation:determinant3}
 \det\left(\Var\left(X_1(t),X_{122}(t),\ldots,X_{1NN}(t)\right)\right)=\left(\frac{2\lambda_6}{15}\right)^{N-2}\frac{3(N+1)\lambda_2\lambda_6-5(N-1)\lambda_4^2}{45}.
\end{equation}
\end{lemme}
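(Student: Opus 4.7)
The plan is to deduce everything from Lemma \ref{l:moments}, which characterizes the covariances of partial derivatives of an isotropic Gaussian field and, crucially, tells us that $\E(\partial^I X(t)\,\partial^J X(t))=0$ unless every coordinate sum $I_k+J_k$ is even. Since each $\zeta_r$ is a finite vector of partial derivatives of a centred Gaussian field, the five vectors are automatically jointly Gaussian and centred; hence it suffices to verify zero cross-covariance between distinct $\zeta_r$'s and to read off the entries of each $\Var(\zeta_r)$ from \eqref{eq2:lemme1}.

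For the independence claim, I would assign to every differential monomial occurring in some $\zeta_r$ its parity vector $(\epsilon_1,\ldots,\epsilon_N)\in\{0,1\}^N$, where $\epsilon_k$ is the parity of the order of differentiation in $t_k$. The allowed patterns are: a single $1$ in some coordinate of $\{2,\ldots,N\}\setminus\{\ell\}$ for $\zeta_1$; two $1$'s in distinct coordinates for $\zeta_2$; the zero vector for $\zeta_3$ (orders $0$, $4$ in $t_1$, or $2$ in $t_i$ are all even); a single $1$ in coordinate $1$ for $\zeta_4$; a single $1$ in coordinate $\ell$ for $\zeta_5$. A routine ten-case inspection shows that whenever we pick one component from $\zeta_r$ and one from $\zeta_{r'}$ with $r\neq r'$, the coordinate-wise sum of their multi-indices has at least one odd entry, so by Lemma \ref{l:moments} the covariance vanishes. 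Joint Gaussianity then upgrades this block-diagonal covariance structure to joint independence.

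The individual variance matrices are obtained by direct substitution into \eqref{eq2:lemme1}. For instance, $\Var(X_{ii})$ comes from $\beta_i=2$ and yields $\lambda_4$; $\Cov(X_{ii},X_{jj})$ with $i\neq j$ comes from $\beta_i=\beta_j=1$ and yields $\lambda_4/3$; $\Cov(X,X_{11})$ has $\beta_1=1$, $|j|=2$, producing the sign $(-1)^{|\beta|+|j|}=-1$ and magnitude $\lambda_2$; and so on. Running through the obvious cases reproduces every entry of $M_{(N+2)}$, $\tilde{M}_{(N+1)}$, the diagonal matrices $\Var(\zeta_1)$ and $\Var(\zeta_2)$, and the $2\times 2$ matrix $\Var(\zeta_5)$.

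For the two determinant identities, the matrix in \eqref{equation:determinant1} is $(\lambda_4/3)(2\,Id_N+J_N)$ by the preceding computation, and $2\,Id_N+J_N$ has eigenvalues $2$ of multiplicity $N-1$ and $N+2$ of multiplicity $1$, giving $\det=(\lambda_4/3)^N\cdot 2^{N-1}(N+2)$. For \eqref{equation:determinant3}, the relevant matrix has block form with scalar $\lambda_2$, cross-vector $-(\lambda_4/3)\mathbf{1}_{N-1}$, and lower block $D=(\lambda_6/15)(2\,Id_{N-1}+J_{N-1})$; the Schur complement identity gives $\det = \det(D)\bigl(\lambda_2 - (\lambda_4/3)^2\,\mathbf{1}^\top D^{-1}\mathbf{1}\bigr)$, and using $\mathbf{1}^\top(2\,Id_{N-1}+J_{N-1})^{-1}\mathbf{1}=2(N-1)/(N+1)$ together with $\det(D)=(\lambda_6/15)^{N-1}2^{N-2}(N+1)$ produces the claimed expression after simplification. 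The only real bookkeeping effort is the case analysis for independence; all other computations are mechanical once Lemma \ref{l:moments} is in hand.
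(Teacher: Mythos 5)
Your proposal follows essentially the same route as the paper: the independence and all the variance blocks are read off from \eqref{eq2:lemme1} (the paper simply declares them a ``direct consequence''; your explicit parity bookkeeping over the ten pairs of blocks is precisely the reason the cross-covariances vanish, and it also covers $\zeta_5$, which the paper's proof does not even mention), and both determinants are computed from the $a\,Id+b\,J$ structure together with a partitioned-determinant step. The one structural difference is in \eqref{equation:determinant3}: the paper pivots on the scalar block $\lambda_2$, so it only needs the identity $\det(x\,Id_n+y\,J_{n,n})=x^{n-1}(x+ny)$ and never inverts $2\,Id_{N-1}+J_{N-1,N-1}$, whereas you pivot on the $(N-1)\times(N-1)$ block, which requires the quantity $\mathbf{1}^\top(2\,Id_{N-1}+J_{N-1,N-1})^{-1}\mathbf{1}$.

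That is where a slip occurs. Since $(2\,Id_{N-1}+J_{N-1,N-1})\mathbf{1}=(N+1)\mathbf{1}$, one gets $\mathbf{1}^\top(2\,Id_{N-1}+J_{N-1,N-1})^{-1}\mathbf{1}=(N-1)/(N+1)$, not $2(N-1)/(N+1)$ as you state. With your value the Schur complement would be $\lambda_2-\frac{10(N-1)\lambda_4^2}{3(N+1)\lambda_6}$ and the determinant would come out as $\left(\frac{2\lambda_6}{15}\right)^{N-2}\frac{3(N+1)\lambda_2\lambda_6-10(N-1)\lambda_4^2}{45}$, which contradicts \eqref{equation:determinant3}; with the corrected value $(N-1)/(N+1)$ your computation (using your correct $\det D=(\lambda_6/15)^{N-1}2^{N-2}(N+1)$) reproduces the stated formula exactly. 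So the argument is sound and matches the paper's proof up to this factor-of-two arithmetic error, which is easily repaired; the parity case analysis, the entries of $M_{(N+2)}$ and $\tilde M_{(N+1)}$, and \eqref{equation:determinant1} all check out.
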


\begin{proof}[Proof of Lemma \ref{lemme1}]
The joint distribution of $\zeta_1,\zeta_2,\zeta_3,\zeta_4$ is a direct consequence of \eqref{eq2:lemme1}. Now let us prove \eqref{equation:determinant1} and \eqref{equation:determinant3}.
We have
$$
 \Var(X_{11}(t),X_{22}(t),\ldots,X_{NN}(t))=\frac{2\lambda_4}{3}Id_N+\frac{\lambda_4}{3}J_{N,N}.
$$
For $x, y\in \R $ it is well known that
\begin{equation}
\det (xId_N + yJ_{N,N})=x^{N-1}(x+Ny).\label{eqdetxid+yj}
\end{equation}
So we obtain \eqref{equation:determinant1}. We have
$$
\det\left(\Var   \left(X_1(t),X_{122}(t),\ldots,X_{1NN}(t)\right)    \right)=\det \left[
\begin{array}{cc}
\lambda_2 & \tilde{A}_{12} \\
\tilde{A}_{21} & \tilde{A}_{22}
\end{array}
\right],
$$
with
$\tilde{A}_{12}=\left[
\begin{array}{ccc}
-\lambda_4/3 & \cdots & -\lambda_4/3 
\end{array}
\right]
$, $\tilde{A}_{21}=\tilde{A}_{12}^\top$ and $\displaystyle \tilde{A}_{22}=\frac{2\lambda_6}{15}Id_{N-1}+\frac{\lambda_6}{15}J_{N-1,N-1}$.\\
Then using the fact that for a partitioned matrix
$
\tilde{A}=\left[
\begin{array}{cc}
 \tilde{A}_{11} & \tilde{A}_{12} \\
 \tilde{A}_{21} & \tilde{A}_{22}
\end{array}
\right]
$
we have (see \cite{searle} p.46)
\begin{equation}
\det \tilde{A}=\det \tilde{A}_{11}\times \det (\tilde{A}_{22}-\tilde{A}_{21}\tilde{A}_{11}^{-1}\tilde{A}_{12}), \label{eqsearle}
\end{equation}
we obtain
\begin{multline*}
\det\left(\Var   \left(X_1(t),X_{122}(t),\ldots,X_{1NN}(t)\right)    \right)\\
=\lambda_2\det \left(\frac{2\lambda_6}{15}Id_{N-1}+J_{N-1,N-1}\left(\frac{3\lambda_2\lambda_6-5\lambda_4^2}{45\lambda_2}\right)\right).
\end{multline*}
Finally using \eqref{eqdetxid+yj} we obtain \eqref{equation:determinant3}.
\end{proof}

\section{Mean number of critical points with a given index} \label{s:mean}
In this section  $\cX$  is a random field 
  satisfying Assumption (A$2$). Then it is proved in Appendix \ref{a:1} that the sample paths are almost surely Morse and this implies that the number of critical points with index $k$  in a Borel set $S$,  ${\cN}^c_k(S)$,  is well defined.   More precisely
\begin{equation}\label{nombre1}
{\cN}^c_k(S):=\#\{t\in S : \nabla X(t)=0,\ i(\nabla^2X(t))=k \}.
\end{equation}
We define also the number of  critical points of index $k$ above the level $u$ by 
 \begin{equation}\label{nombre2}
{\cN}^c_k(u,S):=\#\{t\in S : \nabla X(t)=0,\ i(\nabla^2X(t))=k, \ X(t)>u\}.  
\end{equation}
The aim of this section is to derive explicit and exact expressions for the expectation of \eqref{nombre1} and \eqref{nombre2}.
\subsection{The general case}
By Kac-Rice formulas \eqref{eq:rice2} and \eqref{eq:rice3} and Lemma \ref{lemme1} we get
\begin{align} 
\E\left({\cN}^c_k(S)\right) =& \frac{|S|}{ ( 2\pi \lambda_2)^{N/2}} \E\left( | \det(\nabla^2 X(t) )| \1_{i(\nabla^2 X(t)) =k}  \right) . \label{e:rice2k}  \\
\E ({\cN}^c_k(u,S))=&\frac{|S|}{\lambda_2^{N/2}(2\pi)^{(N+1)/2}} \int_u^{+\infty}\exp \left(-\frac{x^2}{2}\right)  \label{eq:NXuS} \\
& \times \E \left( | \det(\nabla^2 X(t) )  \1_{i(\nabla^2 X(t)) =k}  | \  \big/ X(t) =x \right)dx.\nonumber
\end{align}
Our main tool will be  the use  of random matrices theory and more precisely the GOE distribution. We recall that $G_n$ is said to have the GOE (Gaussian Orthogonal Ensemble)
distribution if it is symmetric and all its elements are independent centred Gaussian variables satisfying $\E(G_{ii}^2)=1$ and $\E(G_{ij}^2)=\frac{1}{2}$.
The relation between GOE  matrices  and   the study  of  critical points  of stationary  isotropic Gaussian  fields  is due to the following lemma initially due to \cite{AW3}
and that can be derived from Lemma \ref{lemme1}.

%%%
\begin{lemme} \label{s:goe}
Let $ G_N$  be a size $N$ GOE matrix and  $ \Lambda$ a $\cN(0,1/2)$ random variable independent of $ G_N$. Then $\nabla^2 X(t) $ is equal in distribution  to 
\begin{equation*}
\sqrt{\frac{ 2 \lambda_4   }{3}}  \bigg( G_N -  \Lambda Id_{N} \bigg),
\end{equation*}
and under the assumption that $ \lambda_4 \geq 3 \lambda_2^2$, $\nabla^2X(t)$ conditionally to $X(t)=x$,  is distributed as
\begin{equation*}
\sqrt{\frac{ 2 \lambda_4   }{3}}  \bigg( G_N -  \tilde{\Lambda} Id_{N} \bigg),
\end{equation*}
where $\tilde{\Lambda}$ is a $\displaystyle \cN \left(\lambda_2x\sqrt{\frac{3}{2\lambda_4}},\frac{\lambda_4-3\lambda_2^2}{2\lambda_4}\right)$ random variable independent of $ G_N$.
\end{lemme}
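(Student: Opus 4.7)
The plan is to read off the joint Gaussian distribution of the entries of $\nabla^2 X(t)$ from Lemma~\ref{lemme1} and then verify that the proposed representation has matching mean and covariance structure; since both sides are centred Gaussian (jointly Gaussian given $X(t)=x$), this matching is sufficient. The key preliminary observation, provided by Lemma~\ref{lemme1} through the blocks $\zeta_2$ and $\zeta_3$, is that the Hessian splits into two independent pieces: the off-diagonal entries $\{X_{ij}(t):i\neq j\}$, which are i.i.d.\ $\cN(0,\lambda_4/3)$, and the diagonal vector $(X_{11}(t),\dots,X_{NN}(t))$, which is centred Gaussian with covariance matrix $\tfrac{2\lambda_4}{3}Id_N+\tfrac{\lambda_4}{3}J_{N,N}$. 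These pieces are moreover independent of $X(t)$ for the off-diagonals, and coupled to $X(t)$ only through $\Cov(X_{ii}(t),X(t))=-\lambda_2$ for the diagonals.

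For the unconditional claim I would just compare this with the moments of $\sqrt{2\lambda_4/3}(G_N-\Lambda\,Id_N)$. Its off-diagonal entries $\sqrt{2\lambda_4/3}\,G_{ij}$ are i.i.d.\ $\cN(0,\lambda_4/3)$ and independent of the diagonal and of $\Lambda$, so they match. Its diagonal entries $\sqrt{2\lambda_4/3}(G_{ii}-\Lambda)$ have marginal variance $\tfrac{2\lambda_4}{3}(1+\tfrac12)=\lambda_4$ and pairwise covariance $\tfrac{2\lambda_4}{3}\Var(\Lambda)=\tfrac{\lambda_4}{3}$, matching as well. The conceptual content of this step is the simple but essential observation that adding a random scalar matrix $\Lambda\,Id_N$ is exactly the operation that creates the common additive component coupling the diagonal entries, which is forced by isotropy.

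For the conditional claim I would use that $\zeta_2$ is independent of $X(t)$, so conditioning on $X(t)=x$ leaves the off-diagonals unchanged and only modifies the joint law of the diagonal. Standard Gaussian regression, with $\Cov(X_{ii},X)=-\lambda_2$ and $\Var(X)=1$, gives conditional mean $-\lambda_2 x$ for each $X_{ii}$ and conditional covariance $\tfrac{2\lambda_4}{3}Id_N+(\tfrac{\lambda_4}{3}-\lambda_2^2)J_{N,N}$ for the diagonal block. On the representation side the diagonal becomes $\sqrt{2\lambda_4/3}(G_{ii}-\tilde\Lambda)$, and a direct computation gives mean $-\sqrt{2\lambda_4/3}\cdot\lambda_2 x\sqrt{3/(2\lambda_4)}=-\lambda_2 x$, marginal variance $\tfrac{2\lambda_4}{3}\bigl(1+\tfrac{\lambda_4-3\lambda_2^2}{2\lambda_4}\bigr)=\lambda_4-\lambda_2^2$, and pairwise covariance $\tfrac{2\lambda_4}{3}\Var(\tilde\Lambda)=\tfrac{\lambda_4}{3}-\lambda_2^2$, all matching. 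There is no substantive obstacle; the only constraint encountered is that $\tilde\Lambda$ must have nonnegative variance, which is precisely the hypothesis $\lambda_4\geq 3\lambda_2^2$ written in the statement.
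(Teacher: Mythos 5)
Your proof is correct and is essentially the derivation the paper has in mind: the paper gives no written proof, stating only that the lemma is due to \cite{AW3} and ``can be derived from Lemma \ref{lemme1}'', which is exactly what you do by reading the covariance structure of the diagonal and off-diagonal Hessian entries (and their coupling with $X(t)$) off Lemma \ref{lemme1} and matching first and second moments with the jointly Gaussian representation $\sqrt{2\lambda_4/3}\,(G_N-\Lambda\,Id_N)$, respectively $\sqrt{2\lambda_4/3}\,(G_N-\tilde\Lambda\,Id_N)$. All your moment computations check out, and you correctly identify $\lambda_4\geq 3\lambda_2^2$ as precisely the condition making $\Var(\tilde\Lambda)$ nonnegative.
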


\noindent We recall that the joint density $f_N$ of the eigenvalues $ (\mu_1, \ldots, \mu_N) $ of a $N$-GOE matrix
 (see Theorem 3.3.1 of \cite{mehta}) is given by:
\begin{equation}
f_N(\mu_1,\ldots,\mu_N)=k_N\exp\left(-\frac{\sum_{i=1}^N\mu_i^2}{2}\right)\prod_{1\leq i<k\leq N}|\mu_k-\mu_i|\ ,\label{eqdensitevp}
\end{equation}
where:
\begin{equation}
k_N:=(2\pi)^{-N/2}\left(\Gamma(3/2)\right)^N\left(\prod_{i=1}^N\Gamma(1+i/2)\right)^{-1}. \label{eqkn}
\end{equation}

\noindent Using Lemma \ref{s:goe}, \eqref{e:rice2k}, \eqref{eq:NXuS} and \eqref{eqdensitevp} we get the following proposition.
\begin{prop}\label{p:NPC}
Let  $L_p$  be the $p$th ordered eigenvalue of a $(N+1)$-GOE matrix ($L_1< L_2<\ldots <L_{N+1}$). For $\cX$ and $S$ as above, under  Assumption (A$2$) %*** a voir pour au dessus de $u$
\begin{equation}\label{eqNck}
\E\left({\cN}^c_k(S)\right) =  \frac{|S|}{\pi^{(N+1)/2}} \left( \frac{\lambda_4}{3\lambda_2}\right)^{N/2} \frac{k_N}{k_{N+1}} \frac{1}{N+1}\E \left( \exp\left(-\frac{L^2_{k+1}}{2}\right)\right).
\end{equation}
When $\displaystyle \lambda_4-3\lambda_2^2>0$,
\begin{multline}\label{eqNcku}
\E ({\cN}^c_k(u,S)) =\frac{|S|}{\pi^{(N+1)/2}} \left( \frac{\lambda_4}{3\lambda_2}\right)^{N/2} \frac{k_N}{k_{N+1}}\frac{1}{N+1}\\
\times \E \left\{\exp\left(-\frac{L_{k+1}^2}{2}\right){\bar{\Phi}}\left[\sqrt{\frac{\lambda_4}{\lambda_4-3\lambda_2^2}}\left(u-L_{k+1}\frac{\sqrt{6}\lambda_2}{\sqrt{\lambda_4}}\right)\right]\right\},
\end{multline}
and when $\displaystyle \lambda_4-3\lambda_2^2=0$
\begin{equation}\label{eqNckup}
\E ({\cN}^c_k(u,S))=\frac{|S|}{\pi^{(N+1)/2}} \frac{k_N}{k_{N+1}}\frac{\lambda_2^{N/2}}{N+1} \E \left( \exp\left(-\frac{L_{k+1}^2}{2}\right) \1_{L_{k+1}>u/\sqrt{2}} \right).
\end{equation}
\end{prop}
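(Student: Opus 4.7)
The plan is to combine the Kac-Rice formulas \eqref{e:rice2k} and \eqref{eq:NXuS} with the GOE representation of $\nabla^2 X(t)$ from Lemma \ref{s:goe}, and then exploit a single algebraic identity tying together the joint eigenvalue densities \eqref{eqdensitevp} of $N$-GOE and $(N+1)$-GOE matrices. For \eqref{eqNck} I would start from \eqref{e:rice2k} and substitute $\nabla^2 X(t)\stackrel{d}{=}\sqrt{2\lambda_4/3}(G_N-\Lambda\,Id_N)$ with $\Lambda\sim\cN(0,1/2)$ independent of $G_N$. Writing $\mu_1,\ldots,\mu_N$ for the (unordered) eigenvalues of $G_N$, one has $|\det(\nabla^2 X(t))|=(2\lambda_4/3)^{N/2}\prod_i|\mu_i-\Lambda|$ and $i(\nabla^2 X(t))=k$ iff exactly $k$ of the $\mu_i$ lie below $\Lambda$, so the expectation in \eqref{e:rice2k} becomes
$$
\Big(\tfrac{2\lambda_4}{3}\Big)^{N/2}\int\int\prod_{i=1}^N|\mu_i-\lambda|\;\1_{\#\{i:\mu_i<\lambda\}=k}\;f_N(\mu)\,\frac{e^{-\lambda^2}}{\sqrt\pi}\,d\mu\,d\lambda.
$$

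The key identity, read off directly from \eqref{eqdensitevp} by separating one variable in $f_{N+1}$,
$$
f_N(\mu_1,\ldots,\mu_N)\prod_{i=1}^N|\mu_i-\lambda|=\frac{k_N}{k_{N+1}}\,e^{\lambda^2/2}\,f_{N+1}(\mu_1,\ldots,\mu_N,\lambda),
$$
converts the Vandermonde weight into the density of an $(N+1)$-GOE. Integrating $\mu$ first and using the symmetry of $f_{N+1}$, one recognises $\int\1_{\#\{i:\mu_i<\lambda\}=k}\,f_{N+1}(\mu,\lambda)\,d\mu=p_{L_{k+1}}(\lambda)/(N+1)$ (exchangeability gives that the density of the $(k+1)$th order statistic equals $(N+1)$ times the density of any fixed eigenvalue having exactly $k$ others below it). This collapses the whole expression to $(N+1)^{-1}(k_N/k_{N+1})\pi^{-1/2}\E[\exp(-L_{k+1}^2/2)]$, and the prefactor $(2\lambda_4/3)^{N/2}/(2\pi\lambda_2)^{N/2}\cdot\pi^{-1/2}=(\lambda_4/(3\lambda_2))^{N/2}\pi^{-(N+1)/2}$ produces \eqref{eqNck}.

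For \eqref{eqNcku} I would repeat the same route, using now the conditional statement of Lemma \ref{s:goe}: under $X(t)=x$, $\nabla^2 X(t)\stackrel{d}{=}\sqrt{2\lambda_4/3}(G_N-\tilde\Lambda\,Id_N)$ with $\tilde\Lambda\mid X=x\sim\cN(\lambda_2 x\sqrt{3/(2\lambda_4)},\,(\lambda_4-3\lambda_2^2)/(2\lambda_4))$. A direct computation shows that $(X,\tilde\Lambda)$ is jointly Gaussian with $\tilde\Lambda\sim\cN(0,1/2)$ marginally, and reversing the conditioning yields $X\mid\tilde\Lambda=\lambda\sim\cN(\sqrt6\lambda_2\lambda/\sqrt{\lambda_4},\,(\lambda_4-3\lambda_2^2)/\lambda_4)$. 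Plugging into \eqref{eq:NXuS} and swapping integration order, the $x$-integral against $p_X(x)p_{\tilde\Lambda\mid X}(\lambda\mid x)$ is exactly $p_{\tilde\Lambda}(\lambda)\,\P(X>u\mid\tilde\Lambda=\lambda)=\pi^{-1/2}e^{-\lambda^2}\bar\Phi\!\left(\sqrt{\lambda_4/(\lambda_4-3\lambda_2^2)}(u-\sqrt6\lambda_2\lambda/\sqrt{\lambda_4})\right)$. Applying the same $f_N\leftrightarrow f_{N+1}$ identity to the $\mu$-integral then delivers \eqref{eqNcku}. The degenerate case $\lambda_4=3\lambda_2^2$ handled in \eqref{eqNckup} corresponds to the conditional variance vanishing: here $\tilde\Lambda=X/\sqrt2$ almost surely, so $\{X>u\}=\{\tilde\Lambda>u/\sqrt2\}$ and the $\bar\Phi$ factor is replaced by $\1_{L_{k+1}>u/\sqrt2}$ after the identity is applied.

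The main obstacle, and the source of the whole formula's compactness, is the $f_N\leftrightarrow f_{N+1}$ identity: it is precisely this algebraic step that turns the Vandermonde weight $\prod|\mu_i-\lambda|$ into an $(N+1)$-dimensional GOE density and explains why the order statistic $L_{k+1}$ of a matrix of size $N+1$ governs the mean count of critical points with index $k$ of the $N$-dimensional field. Everything else is bookkeeping of the constants $k_N/k_{N+1}$ and the Gaussian normalisations, the check that $\tilde\Lambda$ remains $\cN(0,1/2)$ marginally (so that the identity is still applicable after conditioning), and the separate treatment of the boundary $\lambda_4=3\lambda_2^2$.
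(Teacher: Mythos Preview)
Your proposal is correct and follows essentially the same route as the paper's proof: both start from the Kac--Rice formulas \eqref{e:rice2k}, \eqref{eq:NXuS} and Lemma~\ref{s:goe}, and both hinge on the same algebraic identity $f_N(\mu)\prod_i|\mu_i-\lambda|=(k_N/k_{N+1})e^{\lambda^2/2}f_{N+1}(\mu,\lambda)$ to pass from the $N$-GOE eigenvalue density to the $(N+1)$-GOE and thereby recognise the law of $L_{k+1}$. The only cosmetic differences are that the paper writes the integral directly over the ordered region $\mu_1<\cdots<\mu_k<\lambda<\mu_{k+1}<\cdots<\mu_N$ (absorbing an $N!$) rather than using your unordered formulation with the indicator $\#\{i:\mu_i<\lambda\}=k$, and for \eqref{eqNcku} the paper simply says ``integrating first with respect to $x$'' where you spell out the joint Gaussian structure of $(X,\tilde\Lambda)$ and reverse the conditioning; these are the same computation.
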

\begin{remark}
Such kind of result was first obtained by \cite{benarous3} for the $p$-spin spherical spin glass model defined on the Euclidean sphere of radius $\sqrt{N}$ of $\R^N$.
This result can also be found  in  \cite{cheng2} (Proposition 3.9) written in a slightly different way. In this paper we go further:
in Theorem \ref{lemmedensitevpk} we obtain an exact expression for the probability density of the $k$th ordered eigenvalue of a $N$-GOE matrix, denoted by $q_{N}^k(l)$
(see \eqref{densiteqnk1} and \eqref{eq:q22}--\eqref{eq:q55}). We can deduce exact expressions for \eqref{eqNck}, \eqref{eqNcku} and \eqref{eqNckup} as in Propositions \ref{prop:n=3}, and \ref{prop:n=4}.
% and \ref{prop:n=2up}.
\end{remark}
%\begin{remark}
%It is easy to verify that the expected number of critical points depends on the spectral moments of the random field whereas the ratio $\displaystyle \frac{\E\left({\cN}^c_k(S)\right)}{\E\left({\cN}^c(S)\right)}$ is universal.
%\end{remark}
\begin{remark}
As remarked in \cite{AW2}  the condition $ \lambda_4 \geq 3 \lambda_2^2$ is met  if the covariance function $\mathbf r$ is a ``Schoenberg covariance'':
it is a valid covariance function  in every  dimension. Note that more general cases have been studied by \cite{cheng2}.
\end{remark}
\begin{proof}[Proof of Proposition \ref{p:NPC}] 
Using \eqref{e:rice2k}, Lemma \ref{s:goe} and \eqref{eqdensitevp} 
\begin{align*}
 &\E \left(| \det \nabla^2 X(t) |\1_{i(\nabla^2 X(t)) =k}\right) \\
 &=\left(\frac{2\lambda_4}{3}\right)^{N/2} k_N N! \int_{\mu_1<\mu_2<\cdots <\mu_k <\lambda <\mu_{k+1}< \cdots <\mu_N}   
\prod_{1\leq i<k\leq N}|\mu_k-\mu_i|  \prod_{1\leq i\leq N}|\lambda-\mu_i| \\
&\quad  \exp\left(-\frac{\sum_{i=1}^N\mu_i^2}{2}\right) \pi^{-1/2} \exp(-\lambda^2/2)\exp(-\lambda^2/2)d\mu_1 \ldots d\mu_N d\lambda\\
 &= \left(\frac{2\lambda_4}{3}\right)^{N/2}\pi^{-1/2} \frac{k_N} {k_{N+1}}  \frac{N !} {(N+1)!}  \E \left ( \exp\left(-\frac{L_{k+1}^2}{2}\right)\right),
\end{align*}
which proves \eqref{eqNck}. Using Lemma \ref{s:goe},
by \eqref{eq:NXuS} and \eqref{eqdensitevp} we can write
\begin{multline*}
\E ({\cN}^c_k(u,S)) =
\frac{|S|}{\lambda_2^{N/2}(2\pi)^{(N+1)/2}} \int_u^{+\infty}\exp \left(-\frac{x^2}{2}\right)
\left(\frac{2\lambda_4}{3}\right)^{N/2} k_N N! \\
\times \int_{\mu_1<\mu_2<\cdots <\mu_k <y <\mu_{k+1}< \cdots <\mu_N}   
\prod_{1\leq i<k\leq N}|\mu_k-\mu_i|  \prod_{1\leq i\leq N}|y-\mu_i| 
\exp\left(-\frac{\sum_{i=1}^N\mu_i^2}{2}\right) \\
\times \exp(-y^2/2)\exp(y^2/2)p_Y(y)d\mu_1 \ldots d\mu_N \ dy\ dx.
\end{multline*}
Integrating first with respect to $x$ we obtain \eqref{eqNcku}. In the same way, when $\displaystyle \lambda_4-3\lambda_2^2=0$,
$\nabla^2X(t)/X(t)=x$ is distributed as $\displaystyle \sqrt{2\lambda_2^2}G_N-\lambda_2xId_N$ and following the same lines as previously we obtain \eqref{eqNckup}.
\end{proof}

\subsection{Exact expressions}
Before giving exact expressions for $\E ({\cN}^c_k(S))$ and $\E ({\cN}^c_k(u,S))$ we obtain, in the following paragraph \ref{paragrapheGOE}, some results concerning the probability density of
the $k$th ordered eigenvalue of a $N$-GOE matrix. These results will be used in paragraph \ref{exactexpressions}, with Propositions \ref{p:NPC},
to derive exact expressions for $\E ({\cN}^c_k(S))$ and $\E ({\cN}^c_k(u,S))$.

\subsubsection{Probability density of the $k$th ordered eigenvalue of a $N$-GOE matrix}\label{paragrapheGOE}
We denote by $L_1\leq L_2\leq \ldots \leq L_N$ the ordered eigenvalues of a $N$-GOE matrix. We denote by $q_N^k(\ell)$ the probability density of $L_k$.
In this section we obtain an expression for $q_N^k(\ell)$. We give explicit expressions of $q_N^k(\ell)$ for $N=2$, $3$, $4$, $5$ and $k=1,\ldots,N$. In particular we obtain that $q_3^2(\ell)$
is the density of a $\cN (0,1/2)$ random variable.\\
 
Let $\cF_p(n)$ be the set of the parts of $\{1,2,\ldots,n\}$ of size $p$. Let $I\in {\cal F}_{p}(n)$, let $\ell \in \R$,
we define $\displaystyle {{\cal D}}_i^I(\ell)=(-\infty,\ell)$ when $i\in I$ and $\displaystyle {{\cal D}}_i^I(\ell)=(\ell,+\infty)$ when $i\notin I$.
We denote by $\si(.)$ the sign function. 
We define the matrix ${\cA}^{\alpha}(I,\ell)$, for $\alpha=1$ or $2$, as follows.
When $n$ is even, ${\cA}^{\alpha}(I,\ell)$ is the $n\times n$ skew matrix whose elements are, $\forall i,j=1\ldots,n$:
\begin{equation}
a^{\alpha}_{i,j}(I,\ell)=\int_{{{\cal D}}_i^I(\ell)} dx \int_{{{\cal D}}_j^I(\ell)} dy \quad \si(y-x)x^{i-1}y^{j-1}(x-\ell)^{\alpha}(y-\ell)^{\alpha}\exp\left(-\frac{x^2+y^2}{2}\right). \label{def:Atilde1}
\end{equation}
When $n$ is odd , ${\cA}^{\alpha}(I,\ell)$ is the $(n+1)\times (n+1)$ skew matrix whose elements are defined by \eqref{def:Atilde1} $\forall i$, $j=1\ldots,n$ and we add the extra terms, for $i=1,\ldots,n$:
\begin{equation}
a^{\alpha}_{i,n+1}(I,\ell)=-a^{\alpha}_{n+1,i}(I,\ell)=\int_{{{\cal D}}_i^I(\ell)} x^{i-1}(x-\ell)^{\alpha}\exp \left(-\frac{x^2}{2}\right)dx \mbox{ and  }a^I_{n+1,n+1}(\ell)=0. \label{def:Atilde2}
\end{equation}\\

We are now able to state the following theorem that gives an exact expression for $q_{N}^k(\ell)$, the probability density of $L_k$, the $k$th ordered eigenvalue of a $N$-GOE matrix.

\begin{theorem}\label{lemmedensitevpk}
Using notations above, for $\ell \in \R$  and for $k=1,\ldots , N$ we have
\begin{equation}
q_{N}^k(\ell)=k_N N!(-1)^{k-1}\exp \left(-\frac{\ell^2}{2}\right) \sum_{I \in \cF_{k-1}(N-1)} {  \rm Pf }\left({\cA}^{1}(I,\ell)\right)\label{densiteqnk1}
\end{equation}
where $k_N$ is given by \eqref{eqkn} and ${  \rm Pf }\left({\cA}^{1}(I,\ell)\right)$ is the Pfaffian of the skew matrix ${\cA}^{1}(I,\ell)$ defined by \eqref{def:Atilde1} and \eqref{def:Atilde2} with $n=N-1$ and $\alpha=1$.
\end{theorem}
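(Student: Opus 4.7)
The plan is to reduce the density of the $k$th ordered eigenvalue to a single determinantal integral over a ``broken'' ordered simplex, then decompose this integral as a sum of Pfaffians via the de Bruijn identity. Starting from the joint density \eqref{eqdensitevp}, viewed as the ordered density $N!\,k_N\exp(-\sum \ell_i^2/2)\prod_{i<j}(\ell_j-\ell_i)\1_{\ell_1<\cdots<\ell_N}$, I fix $\ell_k=\ell$ and relabel the remaining coordinates as $\nu_1,\ldots,\nu_{N-1}$. Isolating the $k$th eigenvalue in the Vandermonde yields
$$\prod_{i<j}(\ell_j-\ell_i)=(-1)^{k-1}V(\nu)\prod_{i=1}^{N-1}(\nu_i-\ell),$$
the sign coming from the $k-1$ factors $(\ell-\nu_i)$ with $i<k$. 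Setting $\phi_i(x):=x^{i-1}(x-\ell)\exp(-x^2/2)$, the $j$-dependent factor $(\nu_j-\ell)e^{-\nu_j^2/2}$ extracted column-wise gives $\det[\phi_i(\nu_j)]=V(\nu)\prod_j(\nu_j-\ell)e^{-\nu_j^2/2}$, so
$$q_N^k(\ell)=N!\,k_N(-1)^{k-1}e^{-\ell^2/2}\,\mathcal{I},\qquad \mathcal{I}:=\int_{\nu_1<\cdots<\nu_{k-1}<\ell<\nu_k<\cdots<\nu_{N-1}}\det[\phi_i(\nu_j)]\,d\nu.$$

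Next I would introduce the restricted functions $\tilde\phi_i^{\,I}(x):=\phi_i(x)\1_{D_i^I(\ell)}(x)$ for each $I\in\cF_{k-1}(N-1)$ and establish the combinatorial identity
$$\sum_{I\in\cF_{k-1}(N-1)}\det[\tilde\phi_i^{\,I}(\nu_j)]=\1_{\#\{j:\nu_j<\ell\}=k-1}\det[\phi_i(\nu_j)].$$
This follows by expanding each determinant as a signed sum over permutations $\sigma$ and observing that, for fixed $\sigma$, the product $\prod_i\1_{D_i^I}(\nu_{\sigma(i)})$ equals $1$ precisely when $\sigma(I)=\{j:\nu_j<\ell\}$, so the sum over $I$ of size $k-1$ contributes a single term exactly when $\#\{j:\nu_j<\ell\}=k-1$. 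Integrating over the full ordered simplex $\nu_1<\cdots<\nu_{N-1}$, on which this indicator selects $\nu_{k-1}<\ell<\nu_k$, gives
$$\mathcal{I}=\sum_{I\in\cF_{k-1}(N-1)}\int_{\nu_1<\cdots<\nu_{N-1}}\det[\tilde\phi_i^{\,I}(\nu_j)]\,d\nu.$$

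Finally I would apply the de Bruijn identity to each summand. For $N-1$ even, the identity yields a size-$(N-1)$ Pfaffian with $(i,j)$-entry $\int\!\!\int\tilde\phi_i^{\,I}(x)\tilde\phi_j^{\,I}(y)\operatorname{sgn}(y-x)\,dx\,dy$; absorbing the indicators $\1_{D_i^I}$ and $\1_{D_j^I}$ back into the domains of integration shows this matrix is exactly $\cA^1(I,\ell)$ from \eqref{def:Atilde1}, hence the summand equals $\operatorname{Pf}(\cA^1(I,\ell))$. For $N-1$ odd, the parity-corrected de Bruijn formula produces an $(N{-}1{+}1)\times(N{-}1{+}1)$ Pfaffian whose extra row and column consist of the single integrals $\int_{D_i^I}\phi_i(x)\,dx$, matching the entries in \eqref{def:Atilde2}. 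Assembling the three steps then yields \eqref{densiteqnk1}. The main obstacle is the combinatorial identity in Step 2: the permutation-level bookkeeping requires care, and it is only after summing over $I$ that the row-dependent indicator structure collapses cleanly to the symmetric indicator times the unrestricted determinant. Once that identity is secured, both the earlier Vandermonde manipulation and the final de Bruijn step are standard.
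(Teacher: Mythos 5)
Your proposal is correct and follows essentially the same route as the paper's proof: fix the $k$th eigenvalue, pull the factor $(-1)^{k-1}\prod_{i}(\nu_i-\ell)$ out of the Vandermonde, rewrite the integral over the broken simplex as $\sum_{I\in\cF_{k-1}(N-1)}$ of integrals of the row-restricted determinants over the full ordered simplex, and conclude with de Bruijn's identity (including the odd-size correction), exactly matching \eqref{def:Atilde1}--\eqref{def:Atilde2}. The only difference is cosmetic: you justify the key decomposition $\1_{\#\{j:\nu_j<\ell\}=k-1}\det[\phi_i(\nu_j)]=\sum_I\det[\tilde\phi_i^{\,I}(\nu_j)]$ by a direct permutation expansion, whereas the paper obtains the same identity by multiplying by a diagonal indicator matrix and applying a generalized Laplace expansion along the column set $J=\{1,\ldots,k-1\}$.
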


\begin{remark}
The probability density of the $k$th ordered eigenvalue of a $N$-GOE matrix was only known for $k=N$, the largest eigenvalue (see \cite{chiani}). Theorem \ref{lemmedensitevpk}
gives an expression for all $k=1,\ldots, N$.
\end{remark}

\begin{proof}[Proof of Theorem \ref{lemmedensitevpk}]
Let $G_{N-1}$ be a $(N-1)$-GOE matrix with eigenvalues denoted by $\mu_1,\ldots, \mu_{N-1}$. For $\ell \in \R$, we set for $k=2,\ldots, N-1$,
${\cal O}_{k}(\ell)=\{(\mu_i, i=1,\ldots, N-1) \in \R^{N-1} : \mu_1\leq \mu_2 \leq \ldots \leq \mu_{k-1} \leq \ell \leq \mu_{k}\leq \ldots \leq \mu_{N-1}\}$, with trivial adaptation when
 $k=1$ and $N$.
$L_1<L_2<\ldots <L_N$ denote the ordered eigenvalues of a $N$-GOE matrix. Using \eqref{eqdensitevp}, the probability density of $L_k$ is given by: 
\begin{align*}
q_{N}^{k}(l)=&N!\int_{{\cal O}_{k}(\ell)}f_N(\mu_1,\ldots,\mu_{N-1},\ell) d\mu_1\ldots d\mu_{N-1}\\
=&\frac{k_{N}N!}{k_{N-1}}
\int_{{\cal O}_{k}(\ell)} (-1)^{k-1} \det (G_{N-1}-\ell Id_{N-1})f_{N-1}(\mu_1,\ldots,\mu_{N-1})\\
&\times \exp\left(-\frac{\ell^2}{2}\right)d\mu_1\ldots d\mu_{N-1}.
\end{align*}
Thus $\displaystyle q_{N}^{k}(l)=\frac{k_{N}}{k_{N-1}}N(-1)^{k-1}\exp\left(-\frac{\ell^2}{2}\right)\gamma^{k-1}_{N-1,1}(\ell)$ where
\begin{multline*}
\gamma^{k-1}_{N-1,1}(\ell)=k_{N-1}(N-1)!\\
 \times \int_{{\cal O}_{k}(\ell)} \prod_{i=1}^{N-1}(\mu_i-\ell)\prod_{1\leq i<j\leq N-1}(\mu_j-\mu_i)\exp\left(-\frac{\sum_{i=1}^{N-1}\mu_i^2}{2}\right)d\mu_1\ldots d\mu_{N-1}.
\end{multline*}
We set $h_{m}(\mu_j,\ell)=\mu_j^{m-1}(\mu_j-\ell)\exp\left(-\mu_j^2/2\right)$.
We denote by ${\bf{H}}({\pmb{\mu}},\ell)$ the matrix $\{h_m(\mu_j,\ell),m,j=1,\ldots,N-1\}$.
Then calculating $\det({\bf {H}}({\pmb{\mu}},\ell))$ using a Vandermonde determinant we obtain:
\begin{equation*}
\gamma^{k-1}_{N-1,1}(\ell)=k_{N-1}(N-1)!\int_{{\cal O}_{k}(\ell)}\det({\bf{H}}({\pmb{\mu}},\ell)) d\mu_1\cdots \mu_{N-1}.
\end{equation*}
We denote by ${\bf{I}}({\pmb{\mu}},\ell,k)$ the $(N-1)\times (N-1)$ diagonal matrix defined by
$$
{\bf{I}}(\pmb{\mu},\ell,k)=\mbox{diag}(\1_{\mu_1\leq \ell},\ldots,\1_{\mu_{k-1}\leq \ell},\1_{\mu_k\geq \ell},\ldots,\1_{\mu_{N-1}\leq \ell})
$$
with trivial adaptations when $k=1$ or $k=N$. We set
$$
{\bf{H}}^{\star}({\pmb{\mu}},\ell,k):={\bf{H}}({\pmb{\mu}},\ell){\bf{I}}({\pmb{\mu}},\ell,k).
$$
Then
\begin{equation*}
\gamma^{k-1}_{N-1,1}(\ell)=k_{N-1}(N-1)!\int_{\mu_1\leq \ldots \leq \mu_{N-1}}\det({\bf{H}}^{\star}({\pmb{\mu}},\ell,k)) d\mu_1\cdots \mu_{N-1}.
\end{equation*}
Now we need to introduce some more notation.
Let $I$ and $J$ $\in \cF_p(n)$ and $M$ a $n\times n$ matrix, we set $\Delta_{I,J}(M)$ for $\det (M_{I,J})$. We use the convention $\Delta_{\emptyset,\emptyset}(M)=1$.
$\bar{K}=\{1,\ldots,n\}\backslash K$ for any subset $K$. 
Let $I\in {\cal F}_{p}(n)$, $\sigma_I$ is the permutation of $(1,2,\ldots,n)$ such that $\left.\sigma_I\right|_{(1,2,\ldots,p)}$ (resp. $\left.\sigma_I\right|_{(p+1,\ldots,n)}$)
is an increasing one-to-one mapping from $(1,2,\ldots,p)$ on $I$ (resp. from $(p+1,\ldots,n)$ on $\bar{I}$).
Finally we denote by $\epsilon(\sigma)$ the signature of the permutation $\sigma$ of $(1,2,\ldots,n)$.\\

\noindent For a $(n+p)\times (n+p)$ matrix $D$ we have, for any $J\in  \cF_{p}(n+p)$ fixed,
\begin{equation}
\det D=\sum_{I\in \cF_p(n+p)}\epsilon(\sigma_I)\epsilon(\sigma_J)\Delta_{I,J}(D)\Delta_{\bar{I},\bar{J}}(D).\label{calculdet}
\end{equation}
We apply it for $p=k-1$, $n=N-k$ and $J=\{1,\ldots,k-1\}$. We get $\epsilon(\sigma_J)=1$ and
\begin{multline*}
\gamma^{k-1}_{N-1,1}(\ell)=k_{N-1}(N-1)!\sum_{I\in \cF_{k-1}(N-1)}\epsilon(\sigma_I)\\
\times \int_{\mu_1\leq \ldots \leq \mu_{N-1}}\Delta_{I,J}({\bf{H}}^{\star}({\pmb{\mu}},\ell,k))\Delta_{\bar{I},\bar{J}}({\bf{H}}^{\star}({\pmb{\mu}},\ell,k))d\mu_1\cdots \mu_{N-1}.
\end{multline*}
We denote by ${\bf{H}}^I({\pmb{\mu}},\ell)$ the matrix $\{h_m(\mu_j,\ell)\1_{\mu_j\in \cD_m^I(\ell)},m,j=1,\ldots,N-1\}$.
We have
$$
\Delta_{I,J}({\bf{H}}^{\star}({\pmb{\mu}},\ell))=\Delta_{I,J}({\bf{H}}^{I}({\pmb{\mu}},\ell))\mbox{ and }\Delta_{\bar{I},\bar{J}}({\bf{H}}^{\star}({\pmb{\mu}},\ell))=\Delta_{\bar{I},\bar{J}}({\bf{H}}^{I}({\pmb{\mu}},\ell)).
$$
To check this, note that, for example, for $j\leq k-1$, the indicator function appearing in the entry $(m,j)$ of $\displaystyle {\bf{H}}^{\star}({\pmb{\mu}},\ell,k)$ is $\displaystyle \1_{\mu_j\leq \ell}$.
For every $I$ and for $m\in I$, this quantity is also equal to $\displaystyle \1_{\mu_j \in \cD_m^I(\ell)}$.\\

\noindent Then using \eqref{calculdet} with $p=k-1$, $n=N-k$ and $J=\{1,\ldots,k-1\}$, we have $\Delta_{\tilde{I},J}({\bf{H}}^I({\pmb{\mu}},\ell))=0$ when $\tilde{I}\neq I$ and
$$
\det ({\bf{H}}^{I}({\pmb{\mu}},\ell))\1_{\mu_1\leq \ldots \leq \mu_{N-1}}=\epsilon(\sigma_I)\Delta_{I,J}({\bf{H}}^{I}({\pmb{\mu}},\ell))\Delta_{\bar{I},\bar{J}}({\bf{H}}^{I}({\pmb{\mu}},\ell))\1_{\mu_1\leq \ldots \leq \mu_{N-1}}.
$$
We conclude that
$$
\gamma^{k-1}_{N-1,1}(\ell)=k_{N-1}(N-1)!\sum_{I\in \cF_{k-1}(N-1)}\int_{\mu_1\leq \ldots \leq \mu_{N-1}}\det ({\bf{H}}^{I}({\pmb{\mu}},\ell))d\mu_1\cdots \mu_{N-1}.
$$
Since $\displaystyle { \rm Pf }\left({\cA}^{1}(I,\ell)\right)=\int_{\mu_1\leq \ldots \leq \mu_{N-1}}\det ({\bf{H}}^{I}({\pmb{\mu}},\ell))d\mu_1\cdots \mu_{N-1}$ (see
\cite{bruijn}, \cite{mehta1} or equation (13) in \cite{chiani}) we obtain \eqref{densiteqnk1}. This concludes the proof.

\end{proof}

\noindent The major drawback of the result above is its complicated form. However, for small values of $N$,
we are able  to get an explicit expression for $q_{N}^k(l)$ and consequently (using \eqref{eqNck}, \eqref{eqNcku} and \eqref{eqNckup}) for $E\left({\cN}^c_k(S)\right)$ and $E\left({\cN}^c_k(u,S)\right)$.
We give some examples below and we derive Propositions \ref{prop:n=3} and \ref{prop:n=4} which are new results.\\
\\
\noindent {\bf{Examples}}: After tedious calculations we obtain\\
\\
\noindent 1. For $N=2$: $\displaystyle q_2^1(l)=q_2^2(-l)$ and
\begin{equation}
q_2^2(l)=\frac{\exp\left(-\frac{l^2}{2}\right)}{2 \sqrt{\pi}}\left[\exp\left(-\frac{l^2}{2}\right)+\sqrt{2\pi} l \Phi(l)\right]. \label{eq:q22}
\end{equation}
%\begin{equation*}
%\E(\mu_{(2)})=-\E(\mu_{(1)})=\frac{\sqrt{\pi}}{2} \mbox{ and } \Var(\mu_{(2)})=\Var(\mu_{(1)})=\frac{3}{2}-\frac{\pi}{4}.
%\end{equation*}
\noindent 2. For $N=3$: $\displaystyle q_3^1(l)=q_3^3(-l)$, $\displaystyle q_3^2(l)=\frac{\exp\left(-l^2\right)}{\sqrt{\pi}}$ and
\begin{equation}
q_3^3(l)=\frac{\exp\left(-\frac{l^2}{2}\right)}{\pi \sqrt{2}}\left[\sqrt{\pi}(2l^2-1)\Phi \left(l\sqrt{2}\right)+\sqrt{2\pi}\exp\left(-\frac{l^2}{2}\right)\Phi (l)+l\exp\left(-l^2\right)\right]. \label{eq:q33}
\end{equation}
%\begin{equation*}
%\E(\mu_{(3)})=-\E(\mu_{(1)})=\frac{9}{2\sqrt{3\pi}} \mbox{ and } \Var(\mu_{(3)})=\Var(\mu_{(1)})=\frac{33\pi-81}{12\pi}.
%\end{equation*}
\noindent 3. For $N=4$: $\displaystyle q_4^1(l)=q_4^4(-l)$, $\displaystyle q_4^2(l)=q_4^3(-l)$ and
\begin{multline}
q_4^3(l)= \frac{\exp\left(-\frac{l^2}{2}\right)}{2\pi} \left[\frac{3l}{2}\exp \left(-\frac{3l^2}{2}\right) +\sqrt{2\pi}\left(1-\frac{l^2}{2}\right)\bar{\Phi} (l)\exp \left(-l^2 \right)\right.\\
\left. -\frac{\pi (2l^3-3l)}{\sqrt{2}}\Phi \left(l\sqrt{2}\right) \bar{\Phi} (l)
   +\frac{3\sqrt{\pi}(1+2l^2)}{2}\Phi \left (l\sqrt{2}\right) \exp \left(-\frac{l^2}{2}\right) \right] .\label{eq:q43}
\end{multline}
%\begin{equation}
%\E(\mu_{(3)})=-\E(\mu_{(2)})= \mbox{ and } \Var(\mu_{(3)})=\Var(\mu_{(2)})=.\label{eq:meansN=43}
%\end{equation}
\begin{multline}
q_4^4(l)= \frac{\exp\left(-\frac{l^2}{2}\right)}{2\pi} \left[\frac{3l}{2}\exp \left(-\frac{3l^2}{2}\right) -\sqrt{2\pi}\left(1-\frac{l^2}{2}\right)\Phi(l)\exp \left(-l^2 \right)\right.\\
\left. +\frac{\pi (2l^3-3l)}{\sqrt{2}}\Phi \left(l\sqrt{2}\right) \Phi(l)
   +\frac{3\sqrt{\pi}(1+2l^2)}{2}\Phi \left (l\sqrt{2}\right) \exp \left(-\frac{l^2}{2}\right) \right]. \label{eq:q44}
\end{multline}
%\begin{equation}
%\E(\mu_{(4)})=-\E(\mu_{(1)})=\frac{5}{2\sqrt{2\pi}}+\frac{3\sqrt{\pi}{\cal{J}}}{2} \mbox{ and } \Var(\mu_{(4)})=\Var(\mu_{(1)})=.\label{eq:meansN=44}
%\end{equation}
\noindent 4. For $N=5$: $\displaystyle q_5^1(l)=q_5^5(-l)$, $\displaystyle q_5^2(l)=q_5^4(-l)$ and
\begin{multline}
q_5^3(l)=q_5^4(l)-2q_5^5(l)+\frac{\sqrt{2}\exp\left(-\frac{l^2}{2}\right)}{3 \pi^{3/2}} \left[\pi\left(4l^4-12l^2+3\right)\Phi\left(l\sqrt{2}\right)\right.\\
\left. +\sqrt{\pi}\left(3l^3-\frac{13l}{2}\right) \exp \left(-l^2\right)
 +\sqrt{2}\pi \left(l^4+3l^2+\frac{3}{4}\right)\exp \left(-\frac{l^2}{2}\right)\Phi(l) \right].\label{eq:q53}
\end{multline}
%\begin{equation}
%\E(\mu_{(3)})=0 \mbox{ and } \Var(\mu_{(3)})=.\label{eq:meansN=53}
%\end{equation}
\begin{multline}
q_5^4(l)=\frac{\sqrt{2}\exp\left(-\frac{l^2}{2}\right)}{3 \pi^{3/2}} \left[\sqrt{2\pi}\exp \left(-\frac{3l^2}{2}\right)\left(\frac{l^3}{2}+\frac{5l}{4}\right)\right.\\
\left. +\sqrt{2}\pi \Phi \left(l\sqrt{2}\right)\exp \left(-\frac{l^2}{2}\right)\left(l^4+3l^2+\frac{3}{4}\right)\right].\label{eq:q54}
\end{multline}
%\begin{equation}
%\E(\mu_{(4)})=-\E(\mu_{(2)})=\frac{8}{3\sqrt{2\pi}} \mbox{ and } \Var(\mu_{(4)})=\Var(\mu_{(2)})=\frac{27\pi-64}{18\pi}.\label{eq:meansN=54}
%\end{equation}
\begin{multline}
q_5^5(l)= \frac{\sqrt{2}\exp\left(-\frac{l^2}{2}\right)}{3 \pi^{3/2}} \left[\left( 2l^4-6l^2+\frac{3}{2}\right)\pi \Phi ^2 \left (l\sqrt{2}\right)\right.\\
\left. + \left(l^4+3l^2+\frac{3}{4} \right) \sqrt{2} \pi \Phi \left(l\sqrt{2}\right) \Phi(l) \exp\left(-\frac{l^2}{2}\right) +\sqrt{2\pi}\left(\frac{l^3}{2}+ \frac{5l}{4}\right)\Phi(l)\exp \left(-\frac{3l^2}{2} \right) \right.\\
\left. +\left( 3l^3-\frac{13l}{2}\right)\sqrt{\pi}\Phi \left (l\sqrt{2}\right) \exp \left(-l^2 \right)+\left(l^2-2\right)\exp \left(-2l^2 \right)  \right]. \label{eq:q55}
\end{multline}
%\begin{equation}
%\E(\mu_{(5)})=-\E(\mu_{(1)})=\frac{131\sqrt{3}+48\sqrt{2}}{72\sqrt{\pi}} \mbox{ and } \Var(\mu_{(1)})=\Var(\mu_{(5)})=.\label{eq:meansN=55}
%\end{equation}
These probability densities are plotted in Figure \ref{figure2}. They all seem very close to a Gaussian density. But only one, $q_3^2(l)$, is exactly Gaussian. 
\begin{figure}[h!]
\begin{center}
\includegraphics[height=6cm,width=6cm]{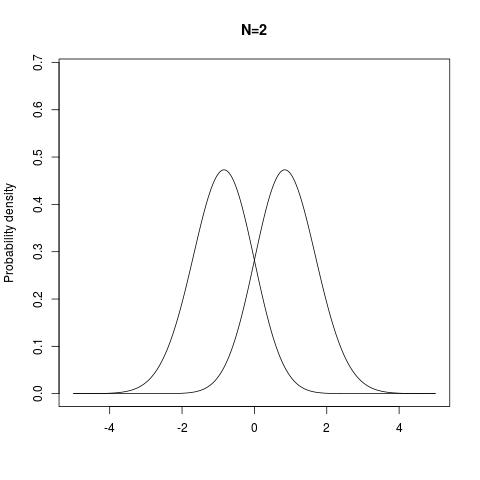}
\includegraphics[height=6cm,width=6cm]{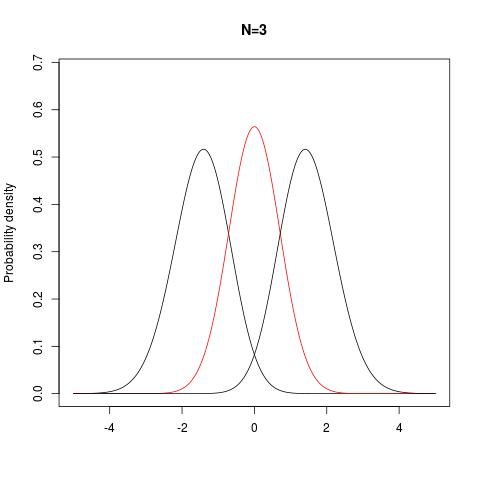} \\
\includegraphics[height=6cm,width=6cm]{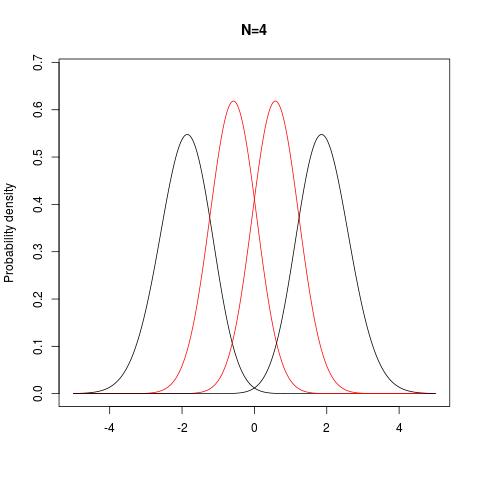}
\includegraphics[height=6cm,width=6cm]{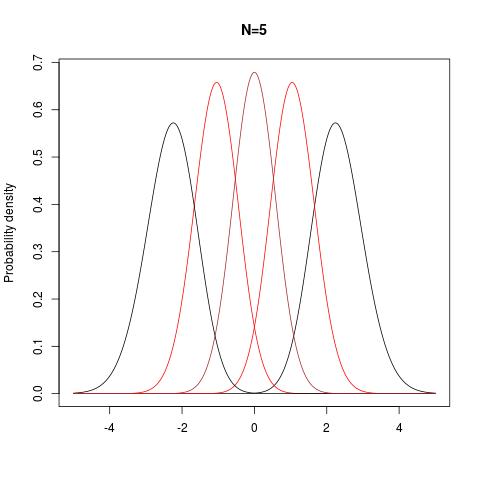}
\caption{Probability densities $q_N^k(l)$, $k=1,\ldots,N$, of the ordered  eigenvalues of a $N$ GOE matrix for $N=2,3,4,5$. }
\label{figure2}
\end{center}
\end{figure}

\subsubsection{Mean number of critical points}\label{exactexpressions}

\noindent For $N=2$, using \eqref{eqNck} and \eqref{eq:q33}, after some calculations,  we retreive \eqref{e:adler} with 
$$\E\left({\cN}^c(S)\right)= \frac{2|S|}{\sqrt{3}\pi}\left(\frac{\lambda_4}{3\lambda_2}\right).
$$
%ich coincides with Proposition 1.1 of \cite{BCW} *** a enlever, particulier *** and Theorem 6.2.1 of \cite{adler}.\
 In Proposition \ref{prop:n=3} we give the exact expression of $\E\left({\cN}^c_k(S)\right)$ when $N=3$ for $k=0,1,2,3$.
This proposition is a new result.

\begin{prop} \label{prop:n=3}
Under the conditions of Proposition \ref{p:NPC} when $N=3$
\begin{align*}
\E\left({\cN}^c_0(S)\right)=\E\left({\cN}^c_3(S)\right)&=\frac{|S|}{\pi^2 \sqrt{2}}\left(\frac{\lambda_4}{3\lambda_2}\right)^{3/2}\left(\frac{29-6\sqrt{6}}{12\sqrt{6}}\right)\\
\E\left({\cN}^c_1(S)\right)=\E\left({\cN}^c_2(S)\right)&=\frac{|S|}{\pi^2 \sqrt{2}}\left(\frac{\lambda_4}{3\lambda_2}\right)^{3/2}\left(\frac{29+6\sqrt{6}}{12\sqrt{6}}\right).
\end{align*}
Consequently:
\begin{align*}
\displaystyle \frac{\E\left({\cN}^c_0(S)\right)}{\E\left({\cN}^c(S)\right)}=\frac{\E\left({\cN}^c_3(S)\right)}{\E\left({\cN}^c(S)\right)}=\frac{29-6\sqrt{6}}{116}\simeq 0.1233 \\
\displaystyle \frac{\E\left({\cN}^c_1(S)\right)}{\E\left({\cN}^c(S)\right)}=\frac{\E\left({\cN}^c_2(S)\right)}{\E\left({\cN}^c(S)\right)}=\frac{29+6\sqrt{6}}{116}\simeq 0.3767.
\end{align*}

\end{prop}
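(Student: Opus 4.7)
The plan is to apply formula \eqref{eqNck} of Proposition \ref{p:NPC} with $N=3$, which reduces each $\E({\cal N}^c_k(S))$ for $k=0,1,2,3$ to the one-dimensional Laplace-type integral $\E(\exp(-L_{k+1}^2/2))$, where $L_1\le\dots\le L_4$ are the ordered eigenvalues of a $4$-GOE matrix. The prefactor $k_3/k_4$ can be extracted directly from \eqref{eqkn}: one computes $k_3/k_4=4\sqrt{2}$, so \eqref{eqNck} becomes
$$
\E({\cal N}^c_k(S))=\frac{|S|\sqrt{2}}{\pi^{2}}\left(\frac{\lambda_4}{3\lambda_2}\right)^{3/2}\E\!\left(\exp(-L_{k+1}^{2}/2)\right).
$$
The whole proof is thus reduced to evaluating $\int_{-\infty}^{\infty} e^{-l^{2}/2}\,q_{4}^{p}(l)\,dl$ for $p=1,2,3,4$, with the explicit densities supplied by \eqref{eq:q43}--\eqref{eq:q44}.

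The first observation is the reflection symmetry $q_{4}^{p}(l)=q_{4}^{5-p}(-l)$ noted just before \eqref{eq:q43} and visible from the explicit formulas. Since $l\mapsto e^{-l^{2}/2}$ is even, this immediately gives $\E(e^{-L_{1}^{2}/2})=\E(e^{-L_{4}^{2}/2})$ and $\E(e^{-L_{2}^{2}/2})=\E(e^{-L_{3}^{2}/2})$, which yields the claimed equalities $\E({\cal N}^c_{0}(S))=\E({\cal N}^c_{3}(S))$ and $\E({\cal N}^c_{1}(S))=\E({\cal N}^c_{2}(S))$ without any further work.

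The substantive computation is then to substitute \eqref{eq:q43} and \eqref{eq:q44} into the two remaining integrals and expand. Every summand falls into one of three families: (i) pure Gaussian moments $\int l^{n}e^{-\alpha l^{2}}\,dl$, which are elementary; (ii) single-Mills integrals of the form $\int l^{n}e^{-\alpha l^{2}}\Phi(\beta l)\,dl$, handled by rescaling plus an integration by parts and the classical identity $\int\varphi(x)\Phi(ax+b)\,dx=\Phi(b/\sqrt{1+a^{2}})$; and (iii) double-Mills integrals of the form $\int l^{n}e^{-\alpha l^{2}}\Phi(l)\Phi(l\sqrt{2})\,dl$ (and variants in which one $\Phi$ is replaced by $\bar\Phi$). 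For the type (iii) contributions I would write $\Phi(al)\Phi(bl)=\P(U\le al,\,V\le bl)$ for independent standard normals $U,V$ and absorb the outer Gaussian weight on $l$ into a trivariate centred Gaussian, so that the integral becomes a bivariate orthant probability. Sheppard's identity $\P(X>0,Y>0)=\tfrac14+\tfrac{1}{2\pi}\arcsin\rho$ converts this into an $\arcsin$ of a specific rational correlation, and for the precise correlations arising here the combined algebraic coefficients produce the irrational constants $29\pm 6\sqrt{6}$ in the final numerators.

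Once each $\E(\exp(-L_{p}^{2}/2))$ is in closed form, one collects the four pieces, multiplies by $\frac{|S|\sqrt{2}}{\pi^{2}}(\lambda_{4}/3\lambda_{2})^{3/2}$, and verifies internal consistency by checking that $\sum_{k}\E({\cal N}^c_{k}(S))$ reproduces $\E({\cal N}^c(S))$ computed independently from \eqref{eqNck} applied to the unrestricted index. The ratios in the second part of the statement then follow by division. The main obstacle is purely computational: the type (iii) integrals demand careful identification of the underlying bivariate correlations and scrupulous sign tracking through the reflections $\bar\Phi(l)=\Phi(-l)$, since the exact cancellations that leave the clean constant $6\sqrt{6}$ are fragile. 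No new conceptual ingredient beyond Proposition \ref{p:NPC} and the explicit densities \eqref{eq:q43}--\eqref{eq:q44} is needed.
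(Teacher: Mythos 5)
Your proposal is correct and follows essentially the paper's own (only sketched, ``after tedious calculations'') route: substitute $N=3$ into \eqref{eqNck} with $k_3/k_4=4\sqrt2$, use the symmetry $q_4^p(l)=q_4^{5-p}(-l)$ to get the two equalities, and integrate the explicit $4$-GOE densities \eqref{eq:q43}--\eqref{eq:q44} against $e^{-l^2/2}$. One simplification worth noting: every $\Phi(l\sqrt2)\Phi(\pm l)$ term in \eqref{eq:q43}--\eqref{eq:q44} is multiplied by an odd function of $l$, so the parity identity $\Phi(al)\Phi(bl)-\Phi(-al)\Phi(-bl)=\Phi(al)+\Phi(bl)-1$ reduces these to single-Mills integrals, and no orthant-probability/arcsin evaluation is actually needed (consistent with the final constants being algebraic, e.g. $\E\bigl(e^{-L_4^2/2}\bigr)=\frac{29-6\sqrt6}{24\sqrt6}$), in contrast with the $N=4$ case where the non-elementary constant $\mathcal{I}$ survives.
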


\noindent In the same way for $N=4$, we obtain the expressions given in Proposition \ref{prop:n=4}. 

\begin{prop} \label{prop:n=4}
Set ${\cal{I}}:=\E\left(\Phi(Y)\Phi(\sqrt{2}Y)\right)$, $Y$ being  a Gaussian centred variable with variance $1/3$. Under the conditions of Proposition \ref{p:NPC} when $N=4$
\begin{align*}
\E\left({\cN}^c_1(S)\right)=\E\left({\cN}^c_3(S)\right)&=\frac{|S|}{\pi^2}\left(\frac{\lambda_4}{3\lambda_2}\right)^{2}\left(\frac{25}{24\sqrt{3}}\right),\\
\E\left({\cN}^c_0(S)\right)=\E\left({\cN}^c_4(S)\right)&=\frac{|S|}{\pi^2}\left(\frac{\lambda_4}{3\lambda_2}\right)^{2}\left(\frac{{\cal{I}}\times 100\pi - 57}{48\sqrt{3}\pi}\right),\\
\E\left({\cN}^c_2(S)\right)&=\frac{|S|}{\pi^2}\left(\frac{\lambda_4}{3\lambda_2}\right)^{2}\left(\frac{50\pi(1-2{\cal{I}}) + 57}{24\sqrt{3}\pi}\right).
\end{align*}
Consequently,
$$
\begin{array}{ccc}
\displaystyle \frac{\E\left({\cN}^c_1(S)\right)}{\E\left({\cN}^c(S)\right)}=\frac{\E\left({\cN}^c_3(S)\right)}{\E\left({\cN}^c(S)\right)}=\frac{1}{4} & \mbox{,} & 
\displaystyle \frac{\E\left({\cN}^c_0(S)\right)}{\E\left({\cN}^c(S)\right)}=\frac{\E\left({\cN}^c_4(S)\right)}{\E\left({\cN}^c(S)\right)}=\frac{{\cal{I}}\times 100\pi-57}{200\pi}\simeq 0.060 \mbox{,}
\end{array}
$$
$$
\displaystyle \frac{\E\left({\cN}^c_2(S)\right)}{\E\left({\cN}^c(S)\right)}=\frac{50\pi(1-2{\cal{I}})+57}{100\pi}\simeq 0.380.
$$
\end{prop}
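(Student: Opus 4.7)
The plan is to specialize formula \eqref{eqNck} of Proposition~\ref{p:NPC} to $N=4$, reducing each $\E(\mathcal N^c_k(S))$ for $k=0,\ldots,4$ to the computation of
\[
m_k := \E\bigl(\exp(-L_{k+1}^2/2)\bigr) = \int_{-\infty}^{+\infty}\! e^{-\ell^2/2}\, q_5^{k+1}(\ell)\, d\ell,
\]
where $L_1<\cdots<L_5$ are the ordered eigenvalues of a $5$-GOE matrix and the densities $q_5^j$ are the explicit functions \eqref{eq:q53}--\eqref{eq:q55}. A direct evaluation from \eqref{eqkn}, using the values of $\Gamma$ at half-integers, gives $k_4/k_5=(15/4)\sqrt{2\pi}$, so the global prefactor in \eqref{eqNck} simplifies to $\frac{|S|}{\pi^2}\bigl(\frac{\lambda_4}{3\lambda_2}\bigr)^2 \frac{3\sqrt{2}}{4}$ on top of $m_k$.

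Before attacking the integrals, the symmetries $q_5^1(\ell)=q_5^5(-\ell)$ and $q_5^2(\ell)=q_5^4(-\ell)$ noted in the examples yield at once $m_0=m_4$ and $m_1=m_3$, establishing $\E(\mathcal N^c_0(S))=\E(\mathcal N^c_4(S))$ and $\E(\mathcal N^c_1(S))=\E(\mathcal N^c_3(S))$. Only three integrals remain: $m_1$ from \eqref{eq:q54}, $m_2$ from \eqref{eq:q53} and $m_0$ from \eqref{eq:q55}. A useful cross-check is that the five $m_k$ must sum to reproduce $\E(\mathcal N^c(S))$, itself directly accessible via \eqref{e:rice2k} without the index indicator; this will eventually identify the common denominator $\frac{25}{6\sqrt{3}}$ that controls the ratios at the end of the statement.

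After expansion, each integrand splits into a sum of three types of terms (all multiplied by the outer $e^{-\ell^2/2}$): (i) polynomial-Gaussian pieces $\ell^n e^{-c\ell^2}$, handled by elementary moments; (ii) pieces of the form $\ell^n e^{-c\ell^2}\Phi(a\ell)$ with $a\in\{1,\sqrt 2\}$, which via the classical identity $\int \varphi_\sigma(\ell)\Phi(a\ell)\,d\ell=1/2$ and its variants obtained by differentiating in $a$ or by integration by parts reduce to elementary constants; and (iii), only in $m_0$, a single genuinely new piece involving the product $\Phi(\ell)\Phi(\sqrt 2 \ell)$ weighted by $e^{-3\ell^2/2}$ and a polynomial $P(\ell)$.

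The main obstacle is precisely that last term. Writing $e^{-3\ell^2/2}\,d\ell=\sqrt{2\pi/3}\,\varphi_{1/\sqrt 3}(\ell)\,d\ell$ recasts it as an expectation against $Y\sim\cN(0,1/3)$; the polynomial factor $P(Y)$ (which, up to a constant, equals $Y^4+3Y^2+3/4$) is then eliminated by repeated integration by parts against the density of $Y$, each step either producing an elementary integral of Gaussian-density-times-$\Phi$ type or leaving $\Phi(Y)\Phi(\sqrt 2 Y)$ untouched, so that after collecting terms only the irreducible constant $\cI=\E(\Phi(Y)\Phi(\sqrt 2 Y))$ remains plus elementary contributions. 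Assembling the resulting values of $m_0,m_1,m_2$, multiplying by the prefactor and dividing by $\E(\mathcal N^c(S))$ then yield the three absolute expressions and the three ratios stated in the proposition.
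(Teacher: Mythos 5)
Your route is exactly the one the paper intends (its ``proof'' of this proposition is literally ``in the same way'' as for $N=3$): specialize \eqref{eqNck} to $N=4$, so that $\E({\cN}^c_k(S))$ is the prefactor times $m_k=\int e^{-\ell^2/2}q_5^{k+1}(\ell)\,d\ell$, use the symmetries $q_5^1(\ell)=q_5^5(-\ell)$, $q_5^2(\ell)=q_5^4(-\ell)$ to get the two pairwise equalities, and evaluate the remaining integrals from \eqref{eq:q53}--\eqref{eq:q55}. Your prefactor is right ($k_4/k_5=\frac{15}{4}\sqrt{2\pi}$, hence the factor $\frac{3\sqrt2}{4}$), and the scheme does reproduce the stated values; for instance the symmetry trick $\Phi(\sqrt2\ell)+\Phi(-\sqrt2\ell)=1$ applied to \eqref{eq:q54} gives $m_1=\frac{25\sqrt6}{108}$ and hence the coefficient $\frac{25}{24\sqrt3}$, as claimed.

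One concrete point in your plan is off and needs repair: your typology (i)--(iii) does not cover all the terms of $m_0$. Besides the $\Phi(\ell)\Phi(\sqrt2\ell)$ piece, the density $q_5^5$ in \eqref{eq:q55} contains the term $\pi\bigl(2\ell^4-6\ell^2+\tfrac32\bigr)\Phi^2(\ell\sqrt2)$, which after multiplication by the weight becomes $e^{-\ell^2}\bigl(2\ell^4-6\ell^2+\tfrac32\bigr)\Phi^2(\sqrt2\ell)$; this is neither polynomial--Gaussian, nor Gaussian times a single $\Phi$, nor of the $\Phi(\ell)\Phi(\sqrt2\ell)$ form, and it also propagates into $m_2$ because \eqref{eq:q53} expresses $q_5^3$ through $q_5^4-2q_5^5$ plus a correction. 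Fortunately it only produces elementary constants, so your final form (with ${\cal I}$ as the sole non-elementary constant) survives: note that $\bigl(2\ell^4-6\ell^2+\tfrac32\bigr)e^{-\ell^2}=\frac{d}{d\ell}\bigl[\bigl(\tfrac{3\ell}{2}-\ell^3\bigr)e^{-\ell^2}\bigr]$, so a single integration by parts turns $\Phi^2(\sqrt2\ell)$ into $2\sqrt2\,\varphi(\sqrt2\ell)\Phi(\sqrt2\ell)$ and reduces this piece to your type (ii); equivalently, by the Gaussian orthant formula the relevant correlation here is $\tfrac12$, so only $\arcsin(\tfrac12)=\pi/6$ appears, unlike the $e^{-3\ell^2/2}\Phi(\ell)\Phi(\sqrt2\ell)$ piece whose correlation $1/\sqrt{10}$ is what forces the constant ${\cal I}$ (and, via the Stein-type reduction you describe, the polynomial weight there is indeed peeled off down to a multiple of ${\cal I}$ plus elementary terms). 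With this extra case added, your plan carries through and matches the paper's computation.
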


%\subsubsection{Mean number of critical points above a level}

%\noindent We can also deduce exact expressions for the mean number of critical points above a level. For example using \eqref{eqNckup} and \eqref{eq:q33} we obtain Proposition \ref{prop:n=2up}.

%\begin{prop} \label{prop:n=2up}
%Under the conditions of Proposition \ref{p:NPC}, for $u\in \R$, when $N=2$ and $\lambda_4-3\lambda_2^2=0$,
%\begin{align*}
%\E({\cN}^c_1(u,S))&=\frac{|S|\lambda_2}{\pi}\frac{1}{\sqrt{3}}\bar{\Phi}\left(\sqrt{\frac{3}{2}}u\right)\\
%\E({\cN}^c_2(u,S))&=\frac{|S|\lambda_2}{\pi}\left(\frac{ue^{-u^2/2}\Phi(u)}{2\sqrt{2\pi}}+\frac{e^{-u^2}}{4\pi}+\displaystyle \int_{\frac{u}{\sqrt{2}}}^{+\infty}\frac{e^{-3l^2/2}}{\sqrt{2\pi}}\Phi(l)dl\right)\\
%\E({\cN}^c_0(u,S))&=\frac{|S|\lambda_2}{\pi}\left(\frac{ue^{-u^2/2}\bar{\Phi}(u)}{2\sqrt{2\pi}}-\frac{e^{-u^2}}{4\pi}+\displaystyle \int_{\frac{u}{\sqrt{2}}}^{+\infty}\frac{e^{-3l^2/2}}{\sqrt{2\pi}}\bar{\Phi}(l)dl\right)\\
%\E({\cN}^c(u,S))&=\frac{|S|\lambda_2}{\pi}\left(\frac{ue^{-u^2/2}}{2\sqrt{2\pi}}+\frac{2}{\sqrt{3}}\bar{\Phi}\left(\sqrt{\frac{3}{2}}u\right)\right).
%\end{align*}
 
%\end{prop}
\noindent Note that, in the same way, it is possible to obtain exact expressions for the mean number of critical points above a level $u$ for $N=2$, $3$ or $4$ using \eqref{eq:q33}, \eqref{eq:q43}, \eqref{eq:q44} and \eqref{eq:q53}, \eqref{eq:q54}, \eqref{eq:q55}
with Proposition \ref{p:NPC}.

 \section {Correlation function between critical points}\label{s:process}
In this section $\cX$ is a random field satisfying Assumption (A$2$).
 \subsection{Correlation function, two points function} \label{s:a:def}
 The correlation function of a point  process ${\cal P}$ is defined  by   \cite{berry}
   \begin{equation*}
  A(s,t) := \lim_{\epsilon \to 0}  \frac 1 {V^2(\epsilon)} \E \left({\cal P}(B(s,\epsilon)) {\cal P}(B(t,\epsilon)) \right) \mbox{ when it exists},
\end{equation*}
  where ${\cal P}(B(s,\epsilon))$ is  the number of points in the ball with center $s$ and radius $\epsilon$ and $V(\epsilon)$  its volume of this ball. 
When  the  considered process is  stationary and isotropic, this function depends only on the norm $ \rho:= \|s-t\|$ and by a small abuse of notation we will denote it by  $ A(\rho)$.\\
Suppose now that the point process ${\cal P}$  is  the process of critical points  of $\cX$. Under  our conditions, the Kac-Rice formula of order two in sufficiently small sets is valid (see Appendix \ref{a:1},
Proposition \ref{p:riceN2}, equation \eqref{eq:rice4}) . In particular 
%\begin{multline}\label{e:rice}
%\E[{\cN}^c(S)({\cN}^c(S)-1)] = \int_{S^2}A(\rho)dsdt\\
%=\int_{S^2}    \E\Big( | \det(\nabla^2 X(s) \det(\nabla^2 X(t)) | \Big/\nabla X(s) = \nabla X(t) = 0\Big) p_{\nabla X(s),\nabla X(t)}(0,0) ds dt,
%\end{multline} 
%giving the expression  of the correlation function.\\
%In addition  
if $S$ and $T$  are disjoint, sufficiently close and sufficiently small (as $B(s,\epsilon)$  and $B(t,\epsilon)$ for $s \neq t$  and $\|s-t\|$ and $\epsilon$ sufficiently small), the  Kac-Rice formula  of order two \eqref{eq:rice4} yields 
\begin{multline}\label{e:riceordre2}
\E[{\cN}^c(S){\cN}^c(T)] \\
=\int_{S\times T} \E\left( | \det(\nabla^2 X(s)) \det(\nabla^2 X(t)) | /\nabla X(s) = \nabla X(t) = 0\right) p_{\nabla X(s),\nabla X(t)}(0,0) ds dt,
\end{multline} 
proving that  for $ \rho $ sufficiently small, $A(\rho) $ is well defined  and given by
\begin{equation}\label{e:cor}
 A(\rho) =  \E\left( | \det(\nabla^2 X(0)) \det(\nabla^2 X(t)) | /\nabla X(0) = \nabla X(t) = 0\right) p_{\nabla X(0),\nabla X(t)}(0,0)
 \end{equation}
 with, for example, $t =\rho e_1$ and $ e_1 := (1,0,\ldots,0)$.

\noindent Some papers, as \cite{BCW} consider  the behaviour  as $ \rho \to 0$  of $$
  T(\rho)  :=\E(\cN^c (B_\rho) (\cN^c (B_\rho) -1)), \quad  B_\rho \mbox{ is any ball with radius } \rho.  
$$
It is elementary  to see that  if $ A(\rho) \simeq C \rho^d$ then $T(\rho)  \simeq C V_N^2 \rho^{d+ 2N}$ , where $ \simeq$   means, as in the rest of the paper, equivalence as $\rho \to 0$ and $V_N$ is the volume of the unit ball. 

 \subsection{Attraction, neutrality, repulsion}\label{s:attractionrepulsion}
 \begin{itemize}
 \item
   The reference of neutral point process  is the  Poisson process for which  the correlation function $A(\rho)$  is constant and 
 $T(\rho)$ behaves as $\rho^{2N}$.
 
 \item  The repulsion  is defined by the fact that  the correlation function  $A(\rho)$ tends to zero as $\rho \to 0$. Equivalently 
 $T(\rho) = o(\rho^{2N})$. Note that determinental processes \cite{hough} are a way of constructing  repulsive point processes. 
 
\item  The attraction is just the contrary: as $\rho \to 0$, $A(\rho) \to +\infty$, $\displaystyle  \frac{T(\rho) }{\rho^{2N}} \to +\infty$. 

 \end{itemize}
 As already mentioned in the preceding section, under our assumptions  the sample paths are  a.s. Morse, and the index of each critical  point can a.s. be defined. We can generalise  the definition of the correlation function by 
    \begin{equation} \label{e:aij}
  A^{i_1,i_2}(s,t) := \lim_{\epsilon \to 0}  \frac 1 {   V^2(\epsilon)} \E \big(  \cN^c_{i_1}(B(s,\epsilon)) \cN^c_{i_2}(B(t,\epsilon)) \big), \mbox{ when it exists}.
\end{equation}
The Kac-Rice formula \eqref{eq:rice5}  yields that for $\rho$ sufficiently small this  function is well defined and 
 \begin{multline}
A^{i_1,i_2} (\rho) = p_{\nabla X(0),\nabla X(t)}(0,0)  \\
\times \E\left( | \det(\nabla^2 X(0))  \1_{i(\nabla^2 X(0)) =i_1}\det(\nabla^2 X(t))\1_{i(\nabla^2 X(t)) =i_2} | / \nabla X(0) = \nabla X(t) = 0\right) , \label{eqfctcorr} 
\end{multline}
with again $t =\rho e_1$.  
   In the same way, we can consider attraction, neutrality or repulsion between critical points with indexes $i_1$ and $i_2$.\\

Before giving our main results concerning correlation functions between critical points, we give in the following paragraph some results concerning the conditional distribution
of $\nabla^2X(0), \nabla^2X(t)$ given $\nabla X(0)=\nabla X(t)=0$.

\subsection{Conditional distribution of $\nabla^2 X(0), \nabla^2 X(t)$}

In this section, for short, $\mathbf r(\rho^2)$, $\mathbf r'(\rho^2)$, $\mathbf r''(\rho^2)$, $\mathbf r'''(\rho^2)$ and $\mathbf r^{(4)}(\rho^2)$ are denoted by $\mathbf r$, $\mathbf r'$, $\mathbf r''$, $\mathbf r'''$ and $\mathbf r^{(4)}$.
We recall that $t=\rho e_1$.

\begin{lemme}\label{lemmeconditionaldistribution}
Let $\xi(0)$ and $\xi(t)$ be a representation of the distribution of $\nabla^2 X(0)$ and $\nabla^2 X(t)$ given $\nabla X(0)=\nabla X(t)=0$.
We note ${\xi_{d}}(t)$ the vector $\left(\xi_{11}(t),\ldots,\xi_{NN}(t)\right)$ and ${\xi_{u}}(t)$ the vector $\left(\xi_{12}(t), \xi_{13}(t), \ldots,\xi_{(N-1)N}(t)\right)$.
The joint distribution of
$\left({\xi_{d}}(0),{\xi_{u}}(0),{\xi_{d}}(t),{\xi_{u}}(t)\right)$ is centred Gaussian with variance-covariance matrix:
$$
\left[
\begin{array}{cccc}
 \Gamma_1 & 0 & \Gamma_3 & 0 \\
 0 & \Gamma_2 & 0 & \Gamma_4 \\
 \Gamma_3 & 0 & \Gamma_1 & 0\\
 0 & \Gamma_4 & 0 & \Gamma_2
\end{array}
\right].
$$
$\Gamma_1$ is the $N\times N$ matrix:
$$
\Gamma_1=\left[
\begin{array}{cccc}
12 \mathbf r ''(0)&  4 \mathbf r ''(0) & \cdots &4 \mathbf r ''(0) \\
4 \mathbf r ''(0) & \ddots & 4 \mathbf r ''(0) & \vdots \\
\vdots & 4 \mathbf r ''(0)  & \ddots & 4 \mathbf r ''(0) \\
4 \mathbf r ''(0) & \cdots &4 \mathbf r ''(0) & 12 \mathbf r ''(0)\end{array}
\right]
+ \frac{  \rho^2 \mathbf r '(0)}{ 2[ \mathbf r '(0)^2-(\mathbf r ' + 2 \mathbf r '' \rho^2)^2]}\times  M,
$$
with
$$
M =\left[
\begin{array}{cccc}
(12 \mathbf r '' +8 \rho^2\mathbf r ''')^2 & 4\mathbf r ''(12 \mathbf r '' +8 \rho^2\mathbf r ''')  & \cdots & 4\mathbf r ''(12 \mathbf r '' +8 \rho^2\mathbf r ''')\\
4\mathbf r ''(12 \mathbf r '' +8 \rho^2\mathbf r ''') & 16\mathbf r ''^2 &  \cdots &16\mathbf r ''^2  \\
\vdots & \vdots  & \ddots & \vdots \\
4 \mathbf r ''(12 \mathbf r '' +8 \rho^2\mathbf r ''') & 16\mathbf r ''^2 & \cdots  & 16\mathbf r ''^2 
\end{array}
\right].
$$
$\Gamma_2$ is the $\frac{N(N-1)}{2}\times \frac{N(N-1)}{2}$ diagonal matrix:
$$
\Gamma_2=\left[
\begin{array}{cc}
D_1 & 0 \\
0 & D_2
\end{array}
\right] \mbox{ with }D_1=
\left(4\mathbf r ''(0)+\frac{ 8\rho^2  (\mathbf r '')^2 \mathbf r '(0)}{\mathbf r '(0)^2 -(\mathbf r ')^2}\right) Id_{N-1}\mbox{ and }D_2=\left(4 \mathbf r ''(0)\right)Id_{\frac{(N-1)(N-2)}{2}}.
$$
We set $a:=4 \mathbf r ''+8\rho^2 \mathbf r '''$ and $d:=12 \mathbf r ''+48\rho^2 \mathbf r '''+16\rho^4 \mathbf r^{(4)}$. $\Gamma_3$ is the $N\times N$ matrix:
$$
\Gamma_3=\left[
\begin{array}{ccccc}
d & a  & \cdots & \cdots & a\\
a & 12 \mathbf r ''& 4\mathbf r ''& \cdots & 4\mathbf r ''\\
\vdots & 4\mathbf r '' & \ddots & 4\mathbf r ''& \vdots \\
\vdots & \vdots & 4\mathbf r ''& \ddots & 4\mathbf r ''\\
a & 4\mathbf r ''& \cdots & 4\mathbf r ''& 12\mathbf r ''
\end{array}
\right]
+ \frac{\rho^2(\mathbf r ' +2 \mathbf r ''\rho^2)}{ 2\big[\mathbf r '(0)^2-( \mathbf r ' +2 \mathbf r ''\rho^2)^2\big]}          \times   M .
$$
$\Gamma_4$ is the $\frac{N(N-1)}{2}\times \frac{N(N-1)}{2}$ diagonal matrix:
$$
\Gamma_4=\left[
\begin{array}{cc}
\tilde{D}_1 & 0 \\
0 & \tilde{D}_2
\end{array}
\right] \mbox{ with }\tilde{D}_1=\left(a+
\frac{ 8\rho^2  (\mathbf r '')^2 \mathbf r ' }{\mathbf r '(0)^2-\mathbf r '^2}  \right) Id_{N-1}\mbox{ and }\tilde{D}_2= 4 \mathbf r '' Id_{\frac{(N-1)(N-2)}{2}}.
$$
\end{lemme}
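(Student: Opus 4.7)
\textbf{Plan of proof for Lemma \ref{lemmeconditionaldistribution}.}

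Since the joint vector $\bigl(\nabla X(0),\nabla X(t),\nabla^2 X(0),\nabla^2 X(t)\bigr)$ is Gaussian and centred, the conditional law of the Hessians given $\nabla X(0)=\nabla X(t)=0$ is again centred Gaussian, with covariance obtained from the classical regression formula
\[
\Var(H\,/\,G=0) = \Var(H) - \Cov(H,G)\,\Var(G)^{-1}\,\Cov(G,H),
\]
where $H:=\bigl(\xi_d(0),\xi_u(0),\xi_d(t),\xi_u(t)\bigr)$ (the reordered Hessian entries) and $G:=\bigl(\nabla X(0),\nabla X(t)\bigr)$. The whole proof is therefore a matter of (a) identifying the unconditional covariances via the chain rule applied to $c(s,t)=\mathbf r(\|s-t\|^2)$, and (b) exploiting stationarity/isotropy symmetries to kill many entries before inverting.

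First I would establish the block structure of the answer from symmetry alone. Because $t=\rho e_1$, the problem retains invariance under $O(N-1)$ acting on the coordinates $2,\ldots,N$ and under the reflection $x_\ell\mapsto -x_\ell$ for each $\ell\geq 2$. These symmetries force $\Cov(X_{ii}(\cdot),X_{jk}(\cdot'))=0$ whenever $j\ne k$ (odd number of indices in at least one transverse direction), and likewise for the gradient components: $\Cov(X_i(\cdot),X_{jk}(\cdot'))=0$ whenever $\{i,j,k\}$ has an odd count in some coordinate. Consequently the blocks coupling $\xi_d$ to $\xi_u$ vanish, $\nabla X$ only interacts with $\xi_d$ through its first component $X_1$ and with $\xi_u$ through the components $X_2,\ldots,X_N$ (via entries $X_{1\ell}$), and the two sub-blocks $D_1,D_2$ of $\Gamma_2$ (resp.\ $\tilde D_1,\tilde D_2$ of $\Gamma_4$) correspond to off-diagonals $X_{1\ell}$ versus $X_{\ell m}$ with $\ell,m\geq 2$. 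This already produces the announced block pattern.

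Next I would compute the unconditional covariances. By repeated chain rule on $\mathbf r(\|s-t\|^2)$, one gets explicit formulas of the shape
$\partial_{s_i}\partial_{s_j}\partial_{t_k}\partial_{t_l}\mathbf r(\|s-t\|^2) = P_{ijkl}(s-t;\mathbf r',\mathbf r'',\mathbf r''',\mathbf r^{(4)})$,
evaluated at $s=0,\,t=\rho e_1$; only the combinations that respect the symmetries above survive. At $\rho=0$ this recovers the values in Lemma \ref{l:moments} (giving the $12\mathbf r''(0)$ on the diagonal of $\Gamma_1$ and the $4\mathbf r''(0)$ off-diagonal); at general $\rho$ the extra terms involving $\mathbf r'''$ and $\mathbf r^{(4)}$ appear in $\Gamma_3$ (through the quantities $a$ and $d$) but not in $\Gamma_1$, because the latter is the unconditional variance at a single point. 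For the gradient block I would use that
\[
\Var(G)=\begin{pmatrix}-2\mathbf r'(0)Id_N & \Sigma(\rho)\\ \Sigma(\rho)^\top & -2\mathbf r'(0)Id_N\end{pmatrix},
\]
with $\Sigma(\rho)$ diagonal in the chosen basis: its first entry is $-2\mathbf r'-4\rho^2\mathbf r''$ (the $X_1$--$X_1$ coupling) and the other $N-1$ entries are $-2\mathbf r'$ (the $X_\ell$--$X_\ell$ couplings for $\ell\geq 2$). Hence $\Var(G)^{-1}$ decomposes into independent $2\times 2$ blocks, one per coordinate direction, and the Schur complement is trivial to write down.

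The last step is to carry out the subtraction $\Var(H)-\Cov(H,G)\Var(G)^{-1}\Cov(G,H)$. Only two rank-one corrections are non-zero and give the explicit additive terms: the correction from conditioning on $(X_1(0),X_1(t))$ produces the matrix $\tfrac{\rho^2\mathbf r'(0)}{2[\mathbf r'(0)^2-(\mathbf r'+2\rho^2\mathbf r'')^2]}M$ inside $\Gamma_1$ (and its $\Gamma_3$ analogue, where the numerator becomes $\rho^2(\mathbf r'+2\rho^2\mathbf r'')$), while conditioning on the transverse components $X_\ell$ for $\ell\geq 2$ produces the scalar shift $\tfrac{8\rho^2(\mathbf r'')^2\mathbf r'(0)}{\mathbf r'(0)^2-(\mathbf r')^2}$ in the $D_1$ block of $\Gamma_2$ (and its $\Gamma_4$ analogue). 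All the other sub-blocks are annihilated by symmetry, so no further correction occurs. The main obstacle is purely bookkeeping: matching the signs and prefactors in the $P_{ijkl}$ expansions and identifying the right Schur-complement term with the matrix $M$ displayed in the statement; the symmetry reductions described above are essential to prevent an explosion of terms.
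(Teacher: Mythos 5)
Your plan is correct and follows essentially the same route as the paper's proof: the Gaussian regression (conditional covariance) formula, unconditional covariances of gradient and Hessian obtained by differentiating $\mathbf r(\|s-t\|^2)$, the per-coordinate $2\times 2$ block structure of $\Var(\nabla X(0),\nabla X(t))$, and subtraction of the variance of the conditional expectation, with the corrections correctly located (the pair $(X_1(0),X_1(t))$ acting on the diagonal Hessian entries, the transverse pairs $(X_\ell(0),X_\ell(t))$ acting on the $X_{1\ell}$ entries, and nothing acting on $X_{\ell m}$, $\ell\neq m\geq 2$). The only loose points are cosmetic: the corrections are $N$ separate $2\times 2$-block (rank-two) subtractions rather than ``two rank-one corrections'', and $\Gamma_1$ itself does pick up $\mathbf r'''$ through $M$, even though its unconditional part does not.
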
 
 The proof of Lemma \ref{lemmeconditionaldistribution} is given in Appendix \ref{s:proofs}.\\

In the following lemma, we give the equivalent of the variance-covariance matrix of  $\nabla^2 X(0), \nabla^2 X(t)$ given $\nabla X(0)=\nabla X(t)=0$ as $\rho \rightarrow 0$.

 \begin{lemme}\label{lemme2}
Using the same notation as in Lemma \ref{lemmeconditionaldistribution},
for $j\neq k$ and $j,k\neq 1$, as $\rho \rightarrow 0$
\begin{equation}\label{eqdetvar}
\det\left(\Var \left(\nabla X(0),\nabla X(t)\right)\right) \simeq \rho^{2N} \frac{\lambda_2^N \lambda_4^N}{3^{N-1}},
\end{equation}
\begin{align}  \label{eqvarxi1j}
\Var\left(\xi_{11}(t)\right) &\simeq \frac{\rho^2}{4}\frac{(\lambda_2\lambda_6-\lambda_4^2)}{\lambda_2},\ \Var \left(\xi_{1j}(t)\right) \simeq \frac{\rho^2}{4}\frac{(9\lambda_2\lambda_6-5\lambda_4^2)}{45\lambda_2},\\
\Var\left(\xi_{jj}(t)\right) &\simeq \frac{8\lambda_4}{9},\mbox{  }\Var\left(\xi_{jk}(t)\right) \simeq \frac{\lambda_4}{3}, \label{eqvar2}
\end{align}
%Note that all the coefficient above are  positive
\begin{equation}   \label{eqvarxijk}
\Cov\left(\xi_{11}(t),\xi_{jj}(t)\right)  =  \rho^2\frac{11\lambda_2\lambda_6-15\lambda_4^2}{180\lambda_2} + o(\rho^2), \ 
\Cov\left(\xi_{jj}(t),\xi_{kk}(t)\right) \simeq \frac{2\lambda_4}{9}.
\end{equation}
\begin{equation} \label{zero1}
\mbox{All the other covariances }\Cov(\xi_{il}(t),\xi_{mn}(t)) \mbox{ are zero, }\forall i,l,m,n \in \{1,\ldots,N\}.
\end{equation}
We have of course the same relations for $\xi(0)$. \medskip
\\
\noindent Moreover we have $\forall j,k \in \{2,\ldots,N\}$ and $\forall i \in \{1,\ldots,N\}$; as $\rho \rightarrow 0$,
\begin{equation}
\Cov\left(\xi_{jk}(0),\xi_{jk}(t)\right)\simeq \Var\left(\xi_{jk}(t)\right),\mbox{  }\Cov\left(\xi_{1i}(0),\xi_{1i}(t)\right)\simeq -\Var\left(\xi_{1i}(t)\right), \label{eqcov2} 
\end{equation}
\begin{equation}
\Cov\left(\xi_{11}(0),\xi_{jj}(t)\right) = \rho^2\frac{15\lambda_4^2-7\lambda_2\lambda_6}{180\lambda_2} + o(\rho^2),\mbox{ }\Cov\left(\xi_{jj}(0),\xi_{kk}(t)\right) \simeq \frac{2\lambda_4}{9}\mbox{ for }j\neq k, \label{eqvarxi11}
\end{equation}
\begin{equation} \label{zero}
\mbox {All the other covariances }\Cov(\xi_{il}(0), \xi_{mn}(t)) \mbox{ are zero, }
\forall i,l,m,n \in \{1,\ldots,N\}.
\end{equation}
Finally we also have
\begin{equation}\label{determinant1}
\det\left(\Var\left(\xi_{11}(t),\xi_{11}(0)\right)\right)\simeq \frac{\rho^6}{144}\frac{(\lambda_4\lambda_8-\lambda_6^2)(\lambda_2\lambda_6-\lambda_4^2)}{\lambda_2\lambda_4}. 
\end{equation}
\end{lemme}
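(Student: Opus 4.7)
The strategy is to start from the exact expressions for the conditional covariance matrix of $(\nabla^2 X(0), \nabla^2 X(t))$ given $\nabla X(0) = \nabla X(t) = 0$ supplied by Lemma \ref{lemmeconditionaldistribution}, and to Taylor-expand every entry around $\rho = 0$, converting the derivatives $\mathbf r^{(n)}(0)$ into spectral moments via the identities \eqref{spectralmoments}. The only piece that does not come from Lemma \ref{lemmeconditionaldistribution} is the unconditional statement \eqref{eqdetvar}, which is handled separately below.

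First I would prove \eqref{eqdetvar}. Differentiating $c(s,t)=\mathbf r(\|s-t\|^2)$ gives $\Cov(X_i(0),X_j(t)) = -2\delta_{ij}\mathbf r'(\rho^2) - 4\rho^2\delta_{i,1}\delta_{j,1}\mathbf r''(\rho^2)$, which vanishes off-diagonal. After permuting the $2N$ coordinates so that $(X_i(0),X_i(t))$ are adjacent, the covariance matrix becomes block diagonal with $N$ blocks of size $2$. The block for $i=1$ has determinant $\lambda_2^2-(\lambda_2-\rho^2\lambda_4/2)^2 = \rho^2\lambda_2\lambda_4 + O(\rho^4)$, and each of the remaining $N-1$ blocks has determinant $\lambda_2^2-(\lambda_2-\rho^2\lambda_4/6)^2 = \rho^2\lambda_2\lambda_4/3 + O(\rho^4)$. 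The product yields \eqref{eqdetvar}.

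Next I would prove \eqref{eqvarxi1j}--\eqref{zero} by reading off the entries of $\Gamma_1,\Gamma_2,\Gamma_3,\Gamma_4$ from Lemma \ref{lemmeconditionaldistribution} and expanding. The substitutions $\mathbf r'(0)=-\lambda_2/2$, $\mathbf r''(0)=\lambda_4/12$, $\mathbf r'''(0)=-\lambda_6/120$, $\mathbf r^{(4)}(0)=\lambda_8/1680$ plus expansions such as $\mathbf r'+2\rho^2\mathbf r'' = -\lambda_2/2 + \rho^2\lambda_4/4 + O(\rho^4)$ and $\mathbf r'(0)^2-(\mathbf r'+2\mathbf r''\rho^2)^2 = \rho^2\lambda_2\lambda_4/4 + O(\rho^4)$ let me simplify each entry. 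A nice feature is that in the diagonal entries of $\Gamma_1$ involving the index $1$, the constant $12\mathbf r''(0)=\lambda_4$ is exactly cancelled by the leading part of the second summand, leaving a genuine $\rho^2$-order term $\frac{\rho^2}{4}\frac{\lambda_2\lambda_6-\lambda_4^2}{\lambda_2}$. An analogous cancellation in $D_1$ gives the $\xi_{1j}(t)$ variance. For the off-diagonal blocks $\Gamma_3,\Gamma_4$, entries labelled by $j,k\neq 1$ agree at leading order with the corresponding entries of $\Gamma_1,\Gamma_2$ (confirming \eqref{eqcov2} with no sign flip), while those labelled by index $1$ acquire an opposite sign, confirming \eqref{eqcov2} for $\xi_{1i}$ and yielding \eqref{eqvarxi11} after bookkeeping. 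Entries outside these positions stay identically zero in the structure of Lemma \ref{lemmeconditionaldistribution}, giving \eqref{zero1} and \eqref{zero}.

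Finally I would prove \eqref{determinant1}. The $2\times 2$ matrix $\Var(\xi_{11}(t),\xi_{11}(0))$ has both diagonal entries equal to $[\Gamma_1]_{11}$ and both off-diagonal entries equal to $[\Gamma_3]_{11}$, so its determinant factors as $([\Gamma_1]_{11}-[\Gamma_3]_{11})([\Gamma_1]_{11}+[\Gamma_3]_{11})$. Substituting the exact formulas from Lemma \ref{lemmeconditionaldistribution}, the difference telescopes to something involving $d-12\mathbf r''(0)$ and the quantity $\mathbf r'(0)-(\mathbf r'+2\mathbf r''\rho^2)$ in the denominator of the rational correction, and expanding to sufficient order gives the two factors $\frac{\rho^4}{12}\frac{\lambda_4\lambda_8-\lambda_6^2}{\lambda_4}$ and $\frac{\rho^2}{12}\frac{\lambda_2\lambda_6-\lambda_4^2}{\lambda_2}$ (up to leading order), whose product is \eqref{determinant1}. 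The main obstacle throughout is the precision bookkeeping: the near-complete leading-order cancellations force us to track second and sometimes third order Taylor terms, which is especially painful in the last step where $\rho^6$ precision is needed on a difference of two near-identical rational functions of $\rho^2$.
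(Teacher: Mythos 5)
Your route is correct in substance but genuinely different from the paper's. The paper does \emph{not} Taylor-expand the explicit entries of $\Gamma_1,\ldots,\Gamma_4$ from Lemma \ref{lemmeconditionaldistribution} (those are used only to read off the zero covariances \eqref{zero1} and \eqref{zero}, and exact $3\times3$ determinants are used for the delicate $\rho^2$-order covariance $\Cov(\xi_{11},\xi_{jj})$): instead it writes each conditional variance as a ratio of determinants of \emph{unconditional} covariance matrices, e.g.\ $\Var(\xi_{11}(t))=\det\Var(X_{11}(0),\nabla X(0),\nabla X(t))/\det\Var(\nabla X(0),\nabla X(t))$, applies Trick \ref{trick1} (row operations turning $X_i(t)-X_i(0)$ into $\rho X_{1i}(0)$, etc.), and reads the constants off the one-point moments of Lemma \ref{lemme1}, with polarization for covariances such as $\Cov(\xi_{jj}(t),\xi_{kk}(t))$; \eqref{eqdetvar} is obtained the same way rather than by your block computation. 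What the paper's route buys is that most leading constants appear without expanding rational functions of $\rho^2$ to second order; what yours buys is that everything is reduced to elementary calculus on formulas already established, and I checked that it works: with $\mathbf r'(0)=-\lambda_2/2$, $\mathbf r''(0)=\lambda_4/12$, $\mathbf r'''(0)=-\lambda_6/120$, $\mathbf r^{(4)}(0)=\lambda_8/1680$, your expansions of the $\Gamma_i$ entries reproduce \eqref{eqvarxi1j}--\eqref{eqvarxi11}, and your block-diagonal computation of \eqref{eqdetvar} agrees with the Trick-based one. One slip to repair in the final step: in $\det\Var(\xi_{11}(t),\xi_{11}(0))=([\Gamma_1]_{11}-[\Gamma_3]_{11})([\Gamma_1]_{11}+[\Gamma_3]_{11})$ it is the \emph{sum} that telescopes to a denominator $\mathbf r'(0)-(\mathbf r'+2\rho^2\mathbf r'')$ and is of order $\rho^4$, while the difference is of order $\rho^2$; the correct leading factors are $[\Gamma_1]_{11}-[\Gamma_3]_{11}\simeq \frac{\rho^2}{2}\frac{\lambda_2\lambda_6-\lambda_4^2}{\lambda_2}$ and $[\Gamma_1]_{11}+[\Gamma_3]_{11}\simeq \frac{\rho^4}{72}\frac{\lambda_4\lambda_8-\lambda_6^2}{\lambda_4}$, not the two factors with constant $1/12$ that you quote; their product is nevertheless exactly the right-hand side of \eqref{determinant1}, so the conclusion stands once the bookkeeping is done carefully.
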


 The proof of Lemma \ref{lemme2} is given in Appendix \ref{s:proofs}. It is based  on some lengthy computations  but uses  a trick that simplify  drastically computations. We present it here in the example of the proof of \eqref{eqdetvar}.

\begin{trick}\label{trick1}
Because a determinant is invariant by adding to some row (resp. column) a linear combination of other rows (resp. columns) as $\rho \to 0$,
\begin{align*}
\det&\left(\Var\left(\nabla X(0),\nabla X(t)\right)\right)=\det (\Var(X'_1(0),\ldots,X'_N(0),X'_1(t),\ldots,X'_N(t)))\\
&= \det (\Var(X'_1(0),\ldots,X'_N(0),X'_1(t)-X'_{1}(0),\ldots,X'_N(t)-X'_{N}(0)))\\
&\simeq \rho^{2N} \det (\Var(X'_1(0),\ldots,X'_N(0),X^{''}_{11}(0),\ldots,X^{''}_{1N}(0))).
\end{align*}
\end{trick}
Using Lemma \ref{lemme1} we obtain \eqref{eqdetvar}.

 \subsection{Correlation function between critical points for random processes}    \label{s:1d}

The result below appears in a hidden way in Proposition 4.5 or Section 5.2 of \cite{azais} or in \cite{azais1}. It is stated  in term of crossings of a level and can be applied directly  to critical points  that are simply crossings of the level zero of the derivative.

 \begin{prop}  \label{p:crit}
Let $ \cX= \{X(t) : t \in  \R \} $ be a stationary and isotropic Gaussian process  with real values satisfying Assumption (A$3$).
Let  $A(\rho)$ be the correlation function  between critical points of $\cX$ defined in \eqref{e:cor}. Then, as $\rho \to 0$
   \begin{equation}    \label{e:jma:1}
   A(\rho) \simeq  \frac{( \lambda_2\lambda_6-\lambda_4^2)}{8\pi\sqrt{\lambda_4\lambda_2^3}} \rho.
   \end{equation}
Under Assumption (A$4$), let $A^{1,1}(\rho)$ (resp. $A^{0,0}(\rho)$) be the correlation function 
between maxima (resp. minima) defined in \eqref{eqfctcorr}. Then as $\rho \to 0$
 \begin{equation} \label{e:jma:2}
    A^{1,1}(\rho)=A^{0,0}(\rho) \simeq \frac{(\lambda_4 \lambda_8-\lambda_6^2)^{3/2}}{1296\pi^2(\lambda_4^2)(\lambda_2 \lambda_6-\lambda_4^2)^{1/2}} \rho^4. 
   \end{equation}
   \end{prop}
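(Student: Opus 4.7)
The plan is to apply the Kac--Rice representations \eqref{e:cor} and \eqref{eqfctcorr}, which reduce both correlation functions to a product of the Gaussian density $p_{X'(0),X'(\rho)}(0,0)$ and a conditional expectation of $|X''(0)X''(\rho)|$ (restricted, in the case of \eqref{e:jma:2}, to a sign event). By the $N=1$ instance of Lemma~\ref{lemme2}, $\det\bigl(\Var(X'(0),X'(\rho))\bigr)\simeq\rho^{2}\lambda_{2}\lambda_{4}$, so
\begin{equation*}
p_{X'(0),X'(\rho)}(0,0)\simeq\frac{1}{2\pi\rho\sqrt{\lambda_{2}\lambda_{4}}}.
\end{equation*}
From \eqref{eqvarxi1j} and \eqref{eqcov2}, the conditional law of $(X''(0),X''(\rho))$ given $X'(0)=X'(\rho)=0$ is centred Gaussian with common marginal variance $\sigma^{2}\simeq\rho^{2}(\lambda_{2}\lambda_{6}-\lambda_{4}^{2})/(4\lambda_{2})$ and correlation tending to $-1$; heuristically, two close zeros of $X'$ force $X''$ to change sign between them, so $X''(0)$ and $X''(\rho)$ must have opposite signs in the limit.

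For \eqref{e:jma:1} I will invoke the classical identity $\E|UV|=(2\sigma^{2}/\pi)\bigl(\sqrt{1-r^{2}}+r\arcsin r\bigr)$, valid for a centred bivariate normal $(U,V)$ with common variance $\sigma^{2}$ and correlation $r$; as $r\to -1$ this tends to $\sigma^{2}$, so the conditional expectation of $|X''(0)X''(\rho)|$ is asymptotic to $\sigma^2$. Combining with the density yields
\begin{equation*}
A(\rho)\simeq \frac{\sigma^{2}}{2\pi\rho\sqrt{\lambda_{2}\lambda_{4}}}=\frac{\rho(\lambda_{2}\lambda_{6}-\lambda_{4}^{2})}{8\pi\sqrt{\lambda_{4}\lambda_{2}^{3}}},
\end{equation*}
which is exactly \eqref{e:jma:1}.

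For \eqref{e:jma:2} the event $\{X''(0)<0,\ X''(\rho)<0\}$ has vanishing conditional probability, so a finer expansion is needed. Using Taylor at $0$ and solving the constraints $X'(0)=X'(\rho)=0$ yields
\begin{align*}
X''(0)&=-\tfrac{\rho}{2}X'''(0)-\tfrac{\rho^{2}}{6}X^{(4)}(0)+O(\rho^{3}),\\
X''(\rho)&=\tfrac{\rho}{2}X'''(0)+\tfrac{\rho^{2}}{3}X^{(4)}(0)+O(\rho^{3}).
\end{align*}
Both being negative forces $X^{(4)}(0)<0$ and confines $X'''(0)$ to an interval of length $\rho|X^{(4)}(0)|/3$; parametrising $X'''(0)=-\rho X^{(4)}(0)(1+t)/3$ with $t\in(0,1)$ gives $|X''(0)X''(\rho)|=\rho^{4}X^{(4)}(0)^{2}\,t(1-t)/36$. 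As $\rho\to 0$, the renormalised constraint $(X'(\rho)-X'(0))/\rho\to X''(0)$ shows that conditioning on $X'(0)=X'(\rho)=0$ is asymptotically equivalent to conditioning on $X'(0)=X''(0)=0$; by the parity rule \eqref{eq2:lemme1}, under this limiting conditioning $X'''(0)$ and $X^{(4)}(0)$ are independent centred Gaussians with variances $\tau_{U}^{2}=(\lambda_{2}\lambda_{6}-\lambda_{4}^{2})/\lambda_{2}$ and $\tau_{V}^{2}=(\lambda_{4}\lambda_{8}-\lambda_{6}^{2})/\lambda_{4}$, obtained by standard Gaussian regression on the relevant blocks of Lemma~\ref{lemme1}. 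The double integral, with $\int_{0}^{1}t(1-t)\,dt=1/6$, $\int_{-\infty}^{0}|v|^{3}\phi_{\tau_{V}}(v)\,dv=\tau_{V}^{3}\sqrt{2/\pi}$, and $\phi_{\tau_{U}}(0)=(\tau_{U}\sqrt{2\pi})^{-1}$, produces the $\rho^{4}$ coefficient; multiplying by the density above recovers \eqref{e:jma:2}, and the case $A^{0,0}$ follows by the symmetry $X\mapsto -X$.

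The main obstacle lies in the second part: rigorously justifying that the $O(\rho^{3})$ Taylor remainders do not contaminate the $\rho^{4}$ leading order, and that the exact conditional law of $(X'''(0),X^{(4)}(0))$ given $(X'(0),X'(\rho))=0$ converges to the product of the two independent Gaussians claimed above in a sense strong enough to justify passage to the limit inside the expectation. This will be handled by writing the exact conditional density explicitly via Gaussian regression, as in Lemma~\ref{lemmeconditionaldistribution}, and by a dominated convergence argument exploiting the polynomial-times-Gaussian form of the integrand together with the fact that the exceptional region has polynomial-in-$\rho$ Lebesgue measure.
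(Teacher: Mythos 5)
Your computations are correct and both constants come out right, but your route is genuinely different from the paper's, above all for \eqref{e:jma:2}. For \eqref{e:jma:1} the paper does not argue directly: it invokes Theorem \ref{t1} with $N=1$, whose proof reduces $\E\left(|\det \xi(0)\det\xi(t)|\right)$ to $\Var(\xi_{11}(t))$ because the conditional correlation tends to $-1$; your use of the exact $\arcsin$ formula for $\E|UV|$ is an equivalent, self-contained one-dimensional shortcut. For \eqref{e:jma:2} the paper never Taylor-expands the paths: it works with the exact bivariate Gaussian conditional law of $\left(X''(0),X''(\rho)\right)$ given $X'(0)=X'(\rho)=0$, with common variance $v\simeq \rho^{2}(\lambda_{2}\lambda_{6}-\lambda_{4}^{2})/(4\lambda_{2})$, correlation tending to $-1$ and determinant $D$ given by \eqref{determinant1}, and then applies Lemma \ref{Mario}, which yields $\E\left(X''(0)^{+}X''(\rho)^{+}\,/\,\cdot\right)\simeq D^{3/2}/(6\pi v^{2})$; multiplying by the density gives \eqref{e:jma:2} at once. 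Your parametrisation through $\left(X'''(0),X^{(4)}(0)\right)$ carries exactly the same information, since $D\simeq \frac{\rho^{6}}{144}\tau_{U}^{2}\tau_{V}^{2}$ with your $\tau_{U},\tau_{V}$, and your double integral reproduces the coefficient $1/1296$ correctly. What the paper's route buys is that every step is an exact Gaussian regression identity plus an elementary scaling limit inside a two-dimensional integral, so no pathwise Taylor remainder or convergence-of-conditional-laws argument is needed; what your route buys is a transparent picture of where the exponent $4$ and the constant come from (the admissible interval of length $\rho|X^{(4)}(0)|/3$ and the factor $t(1-t)/36$). Be aware, however, that the step you defer is the real work: under (A$4$) the quadratic-mean Taylor remainder after the $X^{(4)}$ term is only $o(\rho^{3})$, it perturbs the endpoints of the $U$-interval on an event of conditional probability $O(\rho)$, and you must also establish convergence of the conditional law of $\left(X'''(0),X^{(4)}(0)\right)$ given $X'(0)=X'(\rho)=0$ strongly enough to pass to the limit under the integral; this can be done (Cauchy--Schwarz on the remainder together with regression computations in the spirit of Lemma \ref{lemme2}), but it is precisely the technical burden that Lemma \ref{Mario} combined with \eqref{determinant1} was designed to avoid.
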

   
\noindent Note that all the coefficients above are positive.  The interpretation of the proposition  is that  we have always  repulsion between  critical points  and a very strong repulsion between maxima or between minima. As we will see, the surprising result is that does not remain true in higher dimension. 

\noindent Before beginning the proof  we need to give the following lemma .
\begin{lemme} \label{Mario}
Let $X,Y$ be two jointly Gaussian variables with common variance $\sigma^2$  and correlation $c$.  Let $r$ a positive real. Then as $c \to -1$
$$
\E \big((X^+ Y^+)^r\big) \simeq K_r \sigma^{-2(1+r)} \big(\det(\Var(X,Y))\big)^{(2r+1)/2},
$$ 
where 
 $$K_r  =   \frac{1}{2\pi} \int_0^{+\infty}  \int_0^{+\infty}  x^r y^r \exp\Big(- \frac{(x+y)^2}{2}\Big) dx dy < + \infty
 \ \ ,\ \ \displaystyle K_1=\frac{1}{6\pi}.
 $$
Moreover as $c \to -1$
$$
\E \left(X^+Y^-\right) \simeq \frac{\sigma^2}{2}.
$$
\end{lemme}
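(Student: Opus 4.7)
The plan is to normalize to unit variance by scaling (the general case then follows from $(X,Y) = \sigma(\tilde X,\tilde Y)$ together with $\det(\Var(X,Y)) = \sigma^4(1-c^2)$, which converts $K_r(1-c^2)^{r+1/2}$ into $K_r \sigma^{-2(1+r)}\bigl(\det\Var(X,Y)\bigr)^{(2r+1)/2}$), and then to exploit the algebraic identity
\[
x^2 - 2cxy + y^2 = (x+y)^2 - 2(1+c) xy
\]
to rewrite the joint density of $(X,Y)$ (with $\sigma = 1$) as
\[
p(x,y) = \frac{1}{2\pi\sqrt{1-c^2}}\exp\!\left(-\frac{(x+y)^2}{2(1-c^2)} + \frac{xy}{1-c}\right).
\]
The mass of this density concentrates as $c \to -1$ on the antidiagonal $\{x+y=0\}$ in a band of width $\sqrt{1-c^2}$, so the natural move is the anisotropic rescaling $x = \sqrt{1-c^2}\,a$, $y = \sqrt{1-c^2}\,b$.

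After this substitution, collecting the Jacobian and the $x^r y^r$ factor gives
\[
\E\bigl((X^+Y^+)^r\bigr) = \frac{(1-c^2)^{r+1/2}}{2\pi}\int_0^{+\infty}\!\!\int_0^{+\infty} a^r b^r \exp\!\left(-\frac{(a+b)^2}{2} + (1+c) ab\right) da\,db.
\]
To pass to the limit under the integral, I would apply dominated convergence. For $c \in [-1,0]$ the exponent satisfies
\[
-\frac{(a+b)^2}{2} + (1+c) ab = -\frac{a^2+b^2}{2} + c\,ab \le -\frac{a^2+b^2}{2},
\]
supplying the integrable dominator $a^r b^r e^{-(a^2+b^2)/2}$. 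Pointwise the integrand tends to $a^r b^r e^{-(a+b)^2/2}$, so $\E((X^+Y^+)^r) \simeq K_r(1-c^2)^{r+1/2}$, and scaling yields the announced formula. For $K_1$, the substitution $s = a+b$, $t = a-b$ (Jacobian $1/2$ and $ab = (s^2-t^2)/4$) reduces the integral to
\[
K_1 = \frac{1}{16\pi}\int_0^{+\infty} e^{-s^2/2}\int_{-s}^s (s^2-t^2)\,dt\,ds = \frac{1}{12\pi}\int_0^{+\infty} s^3 e^{-s^2/2}\,ds = \frac{1}{6\pi}.
\]

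For the second claim I would use the observation that $\Var(X+Y) = 2\sigma^2(1+c) \to 0$, so $X + Y \to 0$ in $L^2$ and $Y^- = (-Y)^+$ gets close to $X^+$. The $1$-Lipschitz property of the positive part yields $|(-Y)^+ - X^+| \le |X+Y|$, and Cauchy--Schwarz gives
\[
\bigl|\E(X^+ Y^-) - \E((X^+)^2)\bigr| \le \E\bigl(X^+ |X+Y|\bigr) \le \sqrt{\E((X^+)^2)\,\E((X+Y)^2)} \longrightarrow 0,
\]
and since $\E((X^+)^2) = \sigma^2/2$ the second asymptotic is established. The only delicate point in the whole argument is the uniform integrability needed for dominated convergence in the first part, which is handled by the elementary sign observation $c\,ab \le 0$ on $\{a,b>0,\ c\in[-1,0]\}$; otherwise the proof is a straightforward rescaling.
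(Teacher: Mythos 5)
Your proof is correct. For the first asymptotic it is essentially the paper's own argument: the paper makes the same anisotropic change of variables $x=x'\sqrt{\det\Sigma/\sigma^2}$, $y=y'\sqrt{\det\Sigma/\sigma^2}$ (your $\sigma=1$ normalisation plus $x=\sqrt{1-c^2}\,a$ is the same rescaling) and then passes to the limit in the integral, justifying it by monotonicity of the integrand in $c$, whereas you justify it by dominated convergence via the sign observation $c\,ab\le 0$; both justifications are routine and valid, and you additionally verify $K_1=\frac{1}{6\pi}$, which the paper only states. For the second asymptotic your route is genuinely different: the paper integrates explicitly in $y$, obtaining $\E(X^+Y^-)=\frac{(\det\Sigma)^{3/2}}{2\pi\sigma^4}-c\int_0^{+\infty}\frac{x^2}{\sigma\sqrt{2\pi}}\exp\left(-\frac{x^2}{2\sigma^2}\right)\Phi\left(-cx\sqrt{\sigma^2/\det\Sigma}\right)dx$ and then lets $c\to-1$, while you avoid all Gaussian computation beyond $\Var(X+Y)=2\sigma^2(1+c)\to 0$, using the $1$-Lipschitz property of $z\mapsto z^+$ and Cauchy--Schwarz to compare $\E(X^+Y^-)$ with $\E\big((X^+)^2\big)=\sigma^2/2$ (this uses that $X$ is centred, which is the setting in which the lemma is applied and is also implicit in the paper's density formula). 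Your soft $L^2$ argument is arguably cleaner and more robust than the paper's explicit integration, while the paper's computation has the minor virtue of exhibiting the lower-order term $\frac{(\det\Sigma)^{3/2}}{2\pi\sigma^4}$ explicitly.
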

\begin{proof}[Proof of Lemma \ref{Mario}] 
We set $\Sigma:= \Var(X,Y)$. Then
\begin{align*}
\E \big((X^+ Y^+)^r\big)
 & = \int_0^{+\infty} \int_0^{+\infty} x^r y^r   \frac {1} {2\pi \sqrt{\det \Sigma} }
\exp\Big(- \frac{\sigma^2(x^2 -2c xy +y^2)}{2 \det \Sigma }\Big)  dx dy\\
&\simeq \frac {1} {2\pi} (\det \Sigma )^{(2r+1)/2}\sigma^{-2(1+r)}   \int_0^{+\infty} \int_0^{+\infty} x^r y^r  
 \exp\Big(- \frac{(x+y)^2}{2}\Big) dx dy,
\end{align*}
where we have made the change  of variables $\displaystyle x= x'\sqrt{\frac{\det \Sigma }{\sigma^2}}$, $\displaystyle y= y'\sqrt{\frac{\det \Sigma }{\sigma^2}}$
and the passage to the limit is justified because the integrand is a monotone function of $c$.  It is easy to check the convergence of the integral.
$$
\E \left(X^+ Y^-\right) = -\int_0^{+\infty} \int_{-\infty}^0   \frac {xy} {2\pi \sqrt{\det \Sigma} }
\exp\Big(- \frac{\sigma^2(x^2 -2c xy +y^2)}{2 \det \Sigma }\Big)  dy dx.
$$
Integrating first with respect to $y$ we obtain
$$
\E \left(X^+ Y^-\right) = \frac{(\det \Sigma)^{3/2}}{2\pi \sigma^4}-c\int_0^{+\infty}\frac{x^2}{\sigma \sqrt{2\pi}}\exp\left(-\frac{x^2}{2\sigma^2}\right)\Phi\left(-cx\sqrt{\frac{\sigma^2}{\det \Sigma}}\right)dx.
$$
As $c \to -1$, $\displaystyle \frac{\det \Sigma}{\sigma^2}=\sigma^2(1-c^2)\to 0$, therefore
$$
\int_0^{+\infty}\frac{x^2}{\sigma \sqrt{2\pi}}\exp\left(-\frac{x^2}{2\sigma^2}\right)\Phi\left(-cx\sqrt{\frac{\sigma^2}{\det \Sigma}}\right)dx\to \frac{\sigma^2}{2}.
$$
Moreover $\displaystyle \frac{(\det \Sigma)^{3/2}}{\sigma^6}\to 0$. That concludes the proof.
\end{proof}

\begin {proof}[Proof of Proposition \ref{p:crit}]
The first relation \eqref{e:jma:1} is a particular case of Theorem \ref{t1} below. Let us prove the second relation \eqref{e:jma:2}. By \eqref{eqfctcorr}
$$
A^{0,0}(\rho)=p_{X'(\rho),X'(0)}(0,0)\E\left(X''(0)^+ X''(\rho)^+  /  X'(0) = X'(\rho) =0\right).
$$
By \eqref{eqdetvar} we have
$$
p_{X'(\rho),X'(0)}(0,0) \simeq \rho^{-1}\left(2\pi \sqrt{\lambda_4 \lambda_2}\right)^{-1}.
$$
Define 
 \begin{eqnarray*}
w&:=&\Cov \left( X''(0),X''(\rho) /X'(0)=X'(\rho)=0 \right),\\
v&:=&\Var \left( X''(0)/X'(0)=X'(\rho)=0 \right)=\Var \left(X''(\rho) /X'(0)=X'(\rho)=0 \right),\\
D&:=&\det\left(\Var \left( X''(0),X''(\rho) /X'(0)=X'(\rho)=0  \right)\right)=v^2-w^2.
\end{eqnarray*}
By \eqref{eqcov2}, \eqref{eqvarxi1j} and \eqref{determinant1} we have $\displaystyle w \simeq -v$,
$$
v \simeq \rho^2\frac{\lambda_2 \lambda_6 -\lambda_4^2}{4\lambda_2} \mbox{ and }
D \simeq \rho^6\frac{(\lambda_4\lambda_8-\lambda_6^2)(\lambda_2 \lambda_6-\lambda_4^2)}{144 \lambda_2 \lambda_4}.
$$
Now using Lemma \ref{Mario} we get 
$$
\E\left(X''(0)^+ X''(\rho)^+  /  X'(0) = X'(\rho) =0\right) \simeq  \frac{ D^{3/2}}{6 \pi v^2}.
$$
That concludes the proof.
\end{proof}

 \subsection{Correlation function between critical points for random fields} \label{s:correlation}
\subsubsection{All the critical points}

Consider $  \cX= \{X(t) : t \in  \R^N \} $ a stationary and isotropic Gaussian field  with real values. Let us consider two points $s,t \in  \R^N $.
\noindent Theorem \ref{t1} below, gives the asymptotic expression (as $\rho \rightarrow 0$) of the correlation function between critical points \eqref{e:cor} of any isotropic Gaussian field. It generalizes
the result of \cite{BCW}  and  of \cite{BCW2} to general fields in any dimension. 
\begin{theorem}\label{t1}
Let $  \cX= \{X(t) : t \in  \R^N \} $ be a stationary and isotropic Gaussian field  with real values satisfying Assumption (A$3$). Let $A(\rho)$ be the correlation function between critical points \eqref{e:cor}.
Then as $ \rho \rightarrow 0$,
\begin{equation} \label{e:resu1}
A(\rho)\simeq \rho^{2-N}\frac{\gamma_{N-1}}{2^33^{(N-1)/2}\pi^N}\left(\sqrt{\frac{\lambda_4}{\lambda_2}}\right)^N\frac{(\lambda_2\lambda_6-\lambda_4^2)}{\lambda_2\lambda_4},
\end{equation}
where 
$\gamma_{N-1}$ is defined by 
\begin{equation*}
\gamma_{N-1}:=\E \left(
{\det}^2 \left(G_{N-1}-\Lambda Id_{N-1}\right) \right),
\end{equation*}
with $G_{N-1} $ a $(N-1)$-GOE matrix and $\Lambda$ an independent Gaussian random variable with variance $1/3$.
%Expressions for $\gamma_{N-1,2}$ are given in Appendix \ref{s:GOE}, Lemma \ref{lemmeGOE2}.
\end{theorem}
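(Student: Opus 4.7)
The plan is to combine the Kac--Rice representation \eqref{e:cor} with the asymptotic analysis of the conditional Hessians provided by Lemma \ref{lemmeconditionaldistribution} and Lemma \ref{lemme2}. Write
\begin{equation*}
A(\rho) = p_{\nabla X(0),\nabla X(t)}(0,0)\;\E\bigl(|\det \xi(0) \det \xi(t)|\bigr),
\end{equation*}
where $\xi(0),\xi(t)$ denote the conditional Hessians and $t=\rho e_1$. The density factor is handled immediately using \eqref{eqdetvar}: since $\nabla X(0),\nabla X(t)$ is a centred Gaussian vector in $\R^{2N}$, Trick \ref{trick1} and Lemma \ref{lemme1} give
\begin{equation*}
p_{\nabla X(0),\nabla X(t)}(0,0) \simeq \frac{3^{(N-1)/2}}{(2\pi)^N\rho^N(\lambda_2\lambda_4)^{N/2}}.
\end{equation*}

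The heart of the proof is the analysis of $\det \xi(t)$ as $\rho\to 0$. Lemma \ref{lemme2} shows that among the entries of $\xi(t)$, precisely the ones indexed by $1$ (namely $\xi_{11}(t)$ and $\xi_{1j}(t)$ for $j\ge 2$) are of order $\rho$, while all entries $\xi_{jk}(t)$ with $j,k\ge 2$ are $O(1)$. I expand $\det \xi(t)$ along the first row. The leading contribution is $\xi_{11}(t)\det A_t$, where $A_t:=(\xi_{jk}(t))_{j,k\ge 2}$, and this term is of order $\rho$. Every other term $\xi_{1j}(t)M_{1j}$ is $O(\rho^2)$ because the complementary minor $M_{1j}$ contains a whole column of $O(\rho)$ entries (the remaining first-row entries via symmetry). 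Consequently
\begin{equation*}
\det \xi(t)=\xi_{11}(t)\det A_t + O(\rho^2),\qquad \det \xi(0)=\xi_{11}(0)\det A_0 + O(\rho^2),
\end{equation*}
and, via Lemma \ref{lemme2}, the limiting relations $\Cov(\xi_{jk}(0),\xi_{jk}(t))\simeq\Var(\xi_{jk}(t))$ for $j,k\ge 2$ and $\Cov(\xi_{1i}(0),\xi_{1i}(t))\simeq -\Var(\xi_{1i}(t))$ imply that, in the limit, $A_0$ and $A_t$ coincide in distribution as one matrix $A$ while $\xi_{11}(0)=-\xi_{11}(t)+o_P(\rho)$.

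Combining these facts, $|\det\xi(0)\det\xi(t)|\simeq \xi_{11}(t)^2(\det A)^2$, and because the covariances \eqref{eqvarxijk}, \eqref{zero1} show that $\xi_{11}(t)$ and $A$ become asymptotically independent (their correlations are $O(\rho)$), the expectation factorises:
\begin{equation*}
\E\bigl(|\det\xi(0)\det\xi(t)|\bigr) \simeq \Var(\xi_{11}(t))\cdot \E\bigl((\det A)^2\bigr).
\end{equation*}
The first factor is $\frac{\rho^2}{4}\cdot\frac{\lambda_2\lambda_6-\lambda_4^2}{\lambda_2}$ by \eqref{eqvarxi1j}. The second factor is identified by matching the covariance structure of $A$ (diagonal variance $8\lambda_4/9$, off-diagonal variance $\lambda_4/3$, inter-diagonal covariance $2\lambda_4/9$, zero otherwise) with that of $\sqrt{2\lambda_4/3}\bigl(G_{N-1}-\Lambda Id_{N-1}\bigr)$ for $\Lambda\sim \cN(0,1/3)$ independent of $G_{N-1}$; a direct check, analogous to Lemma \ref{s:goe}, gives $\E((\det A)^2)=(2\lambda_4/3)^{N-1}\gamma_{N-1}$. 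Multiplying all three asymptotics and simplifying $\lambda_4^{N-1}/(\lambda_2\lambda_4)^{N/2}=(\lambda_4/\lambda_2)^{N/2}/\lambda_4$ yields \eqref{e:resu1}.

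The main obstacle is the asymptotic factorisation of the expectation. One must justify that the $O(\rho^2)$ remainders in the row expansions do not spoil the leading behaviour (they produce cross-terms of order $\rho^3$ or $\rho^4$ after taking expectations of products of Gaussians, hence negligible compared to the main $O(\rho^2)$ term) and that the joint convergence of $\xi_{11}(t)/\rho$ and $A_t$ to independent limits passes to the absolute-value moment. The latter is routine for jointly Gaussian families because all moments are uniformly bounded and the pointwise convergence of the joint Gaussian density on compact sets, together with mild tail control, yields convergence of $\E(|\xi_{11}(t)/\rho|\cdot|\det A_t|^2\cdot\mathrm{const})$ to the desired product. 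Once this is secured the algebra collapses to the stated expression.
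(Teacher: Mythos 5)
Your proposal is correct and follows essentially the same route as the paper: Kac--Rice plus the density asymptotics \eqref{eqdetvar}, expansion of the determinant to isolate the leading term $\xi_{11}(t)\det\xi_{-1}(t)$ with $O_p(\rho^2)$ remainders, the limiting anticorrelation of $\xi_{11}(0),\xi_{11}(t)$ together with the identification of $\xi_{-1}(t)$ with $\sqrt{2\lambda_4/3}\,(G_{N-1}-\Lambda Id_{N-1})$, and finally the factorisation $\E(|\det\xi(0)\det\xi(t)|)\simeq \Var(\xi_{11}(t))\,\E(\det^2\xi_{-1}(t))$, justified by uniform moment bounds (the paper invokes Borell--Sudakov--Tsirelson for the same purpose). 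The algebraic combination of the three asymptotics matches \eqref{e:resu1}.
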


\begin{remark}
We set $\displaystyle \gamma_{N-1,2}(x):=\E \left({\det}^2 \left(G_{N-1}-x Id_{N-1}\right)\right)$. Note that formulas 2.2.16 and 2.2.17 in \cite{mehta1} give
explicit expressions for the calculation of $\gamma_{N-1,2}(x)$ for $x\in \R$.
\end{remark}

\begin{remark} 
\begin{itemize}
\item Of course  when  $N=1$, we retrieve  \eqref{e:jma:1}.
\item In the particular case $N=2$,  our result agrees with the result of 
\cite{BCW2}, Theorem 1.2.
\item Theorem \ref{t1} means that, for $N=2$, there is a neutrality between  critical points  and for $N>2$ there is even attraction! This is quite different from the case $N=1$.
The next  theorem will give an interpretation of this phenomenon.
\item A first  important consequence   is finiteness of the second moment  of  the number of critical points.
Indeed if $S$ is a compact set of $\R^N$ we can write the Kac-Rice formulas of order 1 and 2, \eqref {e:rice2k}, \eqref{e:riceordre2}, \eqref{eqfctcorr}. If  $\cN^c(S)$ is the number of critical points  that belong to $S$ then
$$
\E(\cN^c(S)) = |S| \E \big( |\det \nabla^2 X(t)| \big) p_{\nabla X(t)} (0) 
$$
and
\begin{equation}
\E\big ((\cN^c(S))(\cN^c(S)-1)\big) = \int_{S^2} A(||s-t||) ds  \ dt. \label{e:second}
\end{equation}
Since  $ \lambda_4$ is assumed to be  finite,   the expectation is always finite.
As for the second factorial moment,  its finiteness is in general  not systematic. In our case \eqref{e:resu1}  implies the convergence of the integral  in \eqref{e:second} on the diagonal implying in turn   the finiteness of the second moment.
Nevertheless our conditions are less general than \cite{estrade} or \cite{Al}.
\end{itemize}
\end{remark}

\begin{proof}[Proof of Theorem \ref{t1}]

For the simplicity of the exposition  we admit in a first step all passages to the limit. They will be justified at the end of the proof. 
 By \eqref{e:cor}
  \begin{equation} \label{e:jma:A}
 A(\rho) = \E \left(|\det (\xi(0))\det (\xi(t)) | \right) p_{\nabla X(0),\nabla X(t)}(0,0),
 \end{equation}
 with $t= \rho e_1$. We recall that $\xi(0) $ and $\xi(t)$ are a representation of the conditional distribution of the Hessian at $0$ and $t$. 
 Because of \eqref{eqdetvar}, 
 \begin{equation} \label{e:jma:dens}
 p_{\nabla X(0),\nabla X(t) }(0,0)  \simeq \rho^{-N} 3^{(N-1)/2} \lambda_2^{-N/2}\lambda_4^{-N/2}
 (2 \pi)^{-N}.
\end{equation}

\noindent It remains to study the expectation in \eqref{e:jma:A}.
We denote  by  $\xi_{-1}(t)$ the $(N-1)\times (N-1)$ matrix corresponding to the Hessian matrix
 $\xi(t)$ without its first row  and column.

\noindent Let us develop  $\det\left(\xi(t)\right)$ with respect to the first row, for example.
The first term is  $ \xi_{11}(t) \det\left(\xi_{-1}(t)\right)$. 

\noindent Consider  another term and develop it now with respect to the first column. Each  of the $N-1$ terms that appear   are products that include a factor  $\xi_{1j} \xi_{j'1}$ with $j,j' \neq 1$ so  because of  \eqref{eqvarxi1j} they are  $O_p(\rho^2)$: divided by $\rho^2$ they are bounded in probability. As a consequence we have proved that 
$$
\det\left(\xi(t)\right) = \xi_{11}(t) \det\left(\xi_{-1}(t)\right) + O_p(\rho^2).
$$
By \eqref{eqvar2} and \eqref{eqvarxi1j} we have
\begin{equation}\label{e:xim1}%%%%%%%%%%%%%%%%%%%%%%%%%%%%%
\xi_{-1}(t)\simeq \sqrt{\frac{2\lambda_4}{3}}\left(G_{N-1}-\Lambda Id_{N-1}\right),
\end{equation} %%
where $\Lambda$ is a ${\cal N}(0,1/3)$ random variable, $G_{N-1}$ is a size ($N-1$) GOE matrix defined previously, $\Lambda$ and $G_{N-1}$ are independent. So
\begin{equation}\label{eqdet2}
\det \left(\xi_{-1}(t)\right)\simeq \left(\frac{2\lambda_4}{3}\right)^{(N-1)/2}\det\left(G_{N-1}-\Lambda Id_{N-1}\right).
\end{equation}
The order of magnitude of  the first term in the development of the determinant  is then  $\rho$. 

\noindent In  conclusion we have proved that 
\begin{align} \label{decadix}
\det\left(\xi(t)\right) & = O_p(\rho) =  \xi_{11}(t) \det\left(\xi_{-1}(t)\right)   +  O_p(\rho^2)  \\
\det\left(\xi(0)\right) \det \left(\xi(t)\right) & =  O_p(\rho^2)   = \xi_{11}(0)\xi_{11}(t) \det\left(\xi_{-1}(0)\right)\det\left(\xi_{-1}(t)\right) + O_p(\rho^3) \notag.
\end{align}

\noindent By \eqref{eqcov2}, \eqref{eqvarxijk} and \eqref{zero1}:
\begin{eqnarray*}
&\mbox{Corr}(\xi_{11}(0),\xi_{11}(t))\to -1, \ \mbox{Corr}(\xi_{jk}(t),\xi_{jk}(0))\to 1,\  \forall j,k\in {2,\ldots,N},&\\
&\Cov(\xi_{11}(t),\xi_{jj}(t)) = O(\rho^2),\ \Cov(\xi_{11}(t),\xi_{jk}(t))= 0 \ \forall j\neq k\in {1,\ldots,N}.&
\end{eqnarray*}
Then
\begin{equation}
\det\left(\xi(0)\right) \det \left(\xi(t)\right) \simeq -\xi_{11}(t)^2 \mbox{det}^2\left(\xi_{-1}(t)\right)  \label{e:det:1}
\end{equation}
$$
\E \left(|\det\left(\xi(0)\right) \det \left(\xi(t)\right)|\right) \simeq \Var(\xi_{11}(t)) 
\E \left( \mbox{det}^2\left(\xi_{-1}(t)\right)\right),
$$
Finally equations \eqref{eqvarxi1j} and \eqref{eqdet2} with \eqref{e:jma:dens} and \eqref{e:jma:A} give \eqref{e:resu1}.\bigskip

\noindent Justification of the passages to the limit: Since we are in the Gaussian space generated by the random field $\cX$ all the variables considered above are jointly Gaussian.
So their absolute values   are bounded   by the maximum in absolute value of a bounded random field.
General  results about the maximum of random fields, for example the Borell-Sudakov-Tsirelson  theorem (\cite{azais} Section 2.4)  implies that the  maximum  of its absolute value  has moments  of every order, giving all the dominations needed. 
\end{proof}
%%%%%%%%%%%%%%%%
%
%%%%%%%%%%
%%%%%%%%%
\subsubsection{Correlation function between critical points with adjacent indexes}  
\begin{theorem}\label{t2}  
Let $  \cX= \{X(t) : t \in  \R^N \} $ be a stationary and isotropic Gaussian field  with real values satisfying Assumption (A$3$). Let 
$A^{k,k+1}(\rho)$ be the correlation function \eqref{eqfctcorr} between critical points with index $k$ and critical points with index $k+1$, $k=0,\ldots,N-1$. Then as $\rho \rightarrow 0$,
\begin{equation}
A^{k,k+1}(\rho)\simeq \rho^{2-N}\frac{\gamma_{N-1}^k}{2^43^{(N-1)/2}\pi^N}\left(\sqrt{\frac{\lambda_4}{\lambda_2}}\right)^N\frac{(\lambda_2\lambda_6-\lambda_4^2)}{\lambda_2\lambda_4}, \label{resu2}
\end{equation}
where $\gamma_{N-1}^k$ is defined by 
 \begin{equation}\label{e:gammaN_1_k}
\gamma_{N-1}^k:=\E \left(
{\det}^2 \left(G_{N-1}-\Lambda Id_{N-1}\right) \1_{\{i(G_{N-1}-\Lambda Id_{N-1})=k\}}\right),
\end{equation}
with $G_{N-1} $ a $(N-1)$-GOE matrix and $\Lambda$ an independent centred Gaussian random variable with variance $1/3$.
 \end{theorem}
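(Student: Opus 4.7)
The plan is to mirror the proof of Theorem \ref{t1}, supplementing it with a mechanism to read off the indexes of the two Hessians. The starting point \eqref{eqfctcorr}, the density asymptotic \eqref{e:jma:dens}, the block expansion \eqref{decadix}, and the perfect (anti-)correlations $\xi_{11}(0)\simeq-\xi_{11}(t)$, $\xi_{-1}(0)\simeq\xi_{-1}(t)$ furnished by Lemma \ref{lemme2} are reused verbatim. What is new is the following index identification. Writing $\xi(t)$ as a $(1+(N-1))$ block matrix with corner $\xi_{11}(t)=O_p(\rho)$, bordering column of norm $O_p(\rho)$ by \eqref{eqvarxi1j}, and principal block $\xi_{-1}(t)$ whose limit $\sqrt{2\lambda_4/3}(G_{N-1}-\Lambda Id_{N-1})$ has almost surely distinct nonzero eigenvalues, the Schur complement formula shows that $\xi(t)$ has exactly one small eigenvalue equal to $\xi_{11}(t)+O_p(\rho^2)$ together with $N-1$ eigenvalues that are $O_p(\rho^2)$-perturbations of those of $\xi_{-1}(t)$. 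Hence, with probability tending to $1$,
\[
 i(\xi(t))=i(\xi_{-1}(t))+\1_{\xi_{11}(t)<0},
\]
and analogously for $\xi(0)$.

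Combined with $\xi_{11}(0)\simeq-\xi_{11}(t)$ and $i(\xi_{-1}(0))=i(\xi_{-1}(t))$ in the limit, the event $\{i(\xi(0))=k,\ i(\xi(t))=k+1\}$ reduces asymptotically to $\{i(\xi_{-1}(t))=k,\ \xi_{11}(t)<0\}$; the opposite sign of $\xi_{11}(t)$ contributes instead to $A^{k+1,k}$. Using \eqref{e:det:1} together with the asymptotic independence of $\xi_{11}(t)$ and $\xi_{-1}(t)$ (their covariances are $O(\rho^2)$ by \eqref{eqvarxijk} and \eqref{zero1}, whereas $\Var(\xi_{-1})=O(1)$, so the correlations vanish), one obtains
\[
 \E\bigl[|\det\xi(0)\det\xi(t)|\,\1_{i(\xi(0))=k}\1_{i(\xi(t))=k+1}\bigr]\simeq \tfrac{1}{2}\Var(\xi_{11}(t))\,\E\bigl[{\det}^2\xi_{-1}(t)\,\1_{i(\xi_{-1}(t))=k}\bigr],
\]
where the $\tfrac{1}{2}$ comes from $\E(\xi_{11}^2\1_{\xi_{11}<0})=\tfrac{1}{2}\Var(\xi_{11})$ by symmetry. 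Substituting $\Var(\xi_{11}(t))\simeq\rho^2(\lambda_2\lambda_6-\lambda_4^2)/(4\lambda_2)$ from \eqref{eqvarxi1j} and $\xi_{-1}(t)\simeq\sqrt{2\lambda_4/3}(G_{N-1}-\Lambda Id_{N-1})$ from \eqref{e:xim1}, the second factor becomes $(2\lambda_4/3)^{N-1}\gamma_{N-1}^k$. Multiplying by \eqref{e:jma:dens} and simplifying yields \eqref{resu2}. As a consistency check, summing over $k=0,\ldots,N-1$ and accounting for the two contributions $A^{k,k+1}$ and $A^{k+1,k}$ recovers \eqref{e:resu1} of Theorem \ref{t1}, since $\sum_k\gamma_{N-1}^k=\gamma_{N-1}$.

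The main obstacle is the rigorous justification of the index identification: one must rule out, with vanishing probability as $\rho\to 0$, that $\xi_{11}(t)$ lies in an $o(\rho)$-neighbourhood of $0$ or that any eigenvalue of $\xi_{-1}(t)$ lies in an $O(\rho^2)$-neighbourhood of $0$. Both follow from the absolute continuity of the relevant limit laws (a centred Gaussian of variance $O(\rho^2)$, and a shifted GOE spectrum). The interchange of limit and expectation is then justified, as at the end of the proof of Theorem \ref{t1}, by Borell--Sudakov--Tsirelson moment bounds for the involved Gaussian quantities.
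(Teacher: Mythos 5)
Your proposal is correct and follows essentially the same route as the paper: the same reduction via \eqref{decadix}, \eqref{e:det:1} and Lemma \ref{lemme2}, the same index identification $i(\xi(t))=i(\xi_{-1}(t))+\1_{\{\xi_{11}(t)<0\}}$ (the paper obtains it through the factorisation \eqref{e:nabla} of the characteristic polynomial and convergence of eigenvectors rather than a Schur-complement perturbation, a cosmetic difference), the same reduction to the single sign configuration $\xi_{11}(0)>0$, $\xi_{11}(t)<0$, and the same factor $\tfrac12\Var(\xi_{11}(t))\,\E\bigl[{\det}^2\xi_{-1}(t)\1_{\{i(\xi_{-1}(t))=k\}}\bigr]$ leading to \eqref{resu2}. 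Your consistency check against Theorem \ref{t1} via $\sum_k\gamma_{N-1}^k=\gamma_{N-1}$ is a nice addition but the argument itself matches the paper's proof.
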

 
This theorem  gives an interpretation  to Theorem  \ref{t1}: the attraction for $N \geq 3$  is in fact due to  attraction between  critical points with adjacent indexes.\\

Again, in the particular case of the random plane wave ($N=2$),  our result agrees with the result of 
\cite{BCW}, formula (9).\\

We set $\displaystyle \gamma_{N-1,2}^k(x):=\E\left({\det}^2 \left(G_{N-1}-x Id_{N-1}\right) \1_{\{i(G_{N-1}-x Id_{N-1})=k\}}\right)$ for $x\in \R$.
Before proving Theorem \ref{t2} we give in the following lemma an exact expression for the calculation of $\displaystyle \gamma_{N-1,2}^k(x)$. 

\begin{lemme}\label{lemmegammaN_1_k}
Using notation above and notation introduced in paragraph \ref{paragrapheGOE} we have
$$
\gamma_{N-1,2}^k(x)=k_{N-1}(N-1)!\sum_{I\in {\cF}_k(N-1)}{  \rm Pf }\left({\cA}^{2}(I,\ell)\right),
$$
where $k_{N-1}$ is given by \eqref{eqkn} and ${  \rm Pf }\left({\cA}^{2}(I,\ell)\right)$ is the Pfaffian of the skew matrix ${\cA}^{2}(I,\ell)$ defined by \eqref{def:Atilde1} and \eqref{def:Atilde2} with $n=N-1$ and $\alpha=2$.
\end{lemme}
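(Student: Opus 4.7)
The plan is to follow essentially the same combinatorial strategy as in the proof of Theorem \ref{lemmedensitevpk}, but with two modifications: the integrand now carries the factor $\prod_i(\mu_i-\ell)^2$ instead of $\prod_i(\mu_i-\ell)$, and the ``cut'' position $\ell$ is no longer the $k$th ordered eigenvalue but the common translation imposed by the index constraint $i(G_{N-1}-\ell Id_{N-1})=k$. Concretely, write $\ell:=x$ and start from the GOE joint eigenvalue density \eqref{eqdensitevp}; unordering the eigenvalues, the event $i(G_{N-1}-\ell Id_{N-1})=k$ becomes, after a factor $(N-1)!$, the requirement $\mu_1<\cdots<\mu_k<\ell<\mu_{k+1}<\cdots<\mu_{N-1}$, so that
\begin{equation*}
\gamma_{N-1,2}^k(\ell)=k_{N-1}(N-1)!\int_{\cO_{k+1}(\ell)}\prod_{i=1}^{N-1}(\mu_i-\ell)^2\prod_{1\le i<j\le N-1}(\mu_j-\mu_i)\exp\Bigl(-\tfrac12\textstyle\sum_i\mu_i^2\Bigr)d\mu_1\cdots d\mu_{N-1}.
\end{equation*}

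Next I would introduce the analogue $h_m(\mu_j,\ell):=\mu_j^{m-1}(\mu_j-\ell)^2\exp(-\mu_j^2/2)$ (so exactly the $\alpha=2$ kernel appearing in \eqref{def:Atilde1}--\eqref{def:Atilde2}) and the matrix $\mathbf H(\boldsymbol\mu,\ell)=\{h_m(\mu_j,\ell)\}_{m,j=1,\ldots,N-1}$. A Vandermonde computation identifies the integrand with $\det\mathbf H(\boldsymbol\mu,\ell)$ on the symmetric simplex $\mu_1\le\cdots\le\mu_{N-1}$, and I encode the chamber $\cO_{k+1}(\ell)$ by the diagonal indicator matrix $\mathbf I(\boldsymbol\mu,\ell,k+1)$ exactly as before, producing $\mathbf H^\star(\boldsymbol\mu,\ell,k+1)=\mathbf H(\boldsymbol\mu,\ell)\mathbf I(\boldsymbol\mu,\ell,k+1)$ whose determinant I integrate on the full ordered simplex $\mu_1\le\cdots\le\mu_{N-1}$. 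Applying the Laplace/block expansion \eqref{calculdet} with $p=k$, $n=N-1-k$, $J=\{1,\ldots,k\}$ decomposes $\det\mathbf H^\star$ as a sum over $I\in\cF_k(N-1)$ of products of minors whose indicator factors collapse onto the sets $\cD_m^I(\ell)$ (same cancellation as in Theorem \ref{lemmedensitevpk}).

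The final step is to identify each of the resulting integrals over $\mu_1\le\cdots\le\mu_{N-1}$ of $\det\mathbf H^I(\boldsymbol\mu,\ell)$ with $\mathrm{Pf}(\cA^2(I,\ell))$. For $N-1$ even this is the standard de Bruijn identity (\cite{bruijn}, see also \cite{mehta1}) applied to the kernel with $\alpha=2$; for $N-1$ odd one first augments $\mathbf H^I$ by an extra row/column given by \eqref{def:Atilde2} to make it even-dimensional, which is precisely the convention used in the definition of $\cA^2(I,\ell)$. Assembling these pieces yields the stated formula $\gamma_{N-1,2}^k(\ell)=k_{N-1}(N-1)!\sum_{I\in\cF_k(N-1)}\mathrm{Pf}(\cA^2(I,\ell))$.

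The only nontrivial point is bookkeeping: one must check that the signs $\epsilon(\sigma_I)\epsilon(\sigma_J)$ coming out of \eqref{calculdet}, the chamber-to-$\cD_m^I(\ell)$ conversion on each row of $\mathbf H$, and the sign produced by the de Bruijn identity are compatible, so that all contributions add constructively rather than cancelling. This was already handled in the proof of Theorem \ref{lemmedensitevpk} (where the global sign $(-1)^{k-1}$ arose from the single factor $\prod_i(\mu_i-\ell)$); because the factor $(\mu_i-\ell)^2$ is nonnegative, here no extra global sign appears, which is consistent with $\gamma_{N-1,2}^k(\ell)\ge 0$. This sign reconciliation is the main obstacle; once it is settled, everything else is a direct transcription of the previous proof with $\alpha=2$ and $I\in\cF_k(N-1)$.
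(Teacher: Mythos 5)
Your plan is correct and is essentially identical to the paper's own proof, which simply reruns the argument of Theorem \ref{lemmedensitevpk} with the integrand over $\cO_{k+1}(\ell)$ carrying the squared factor, the kernel $h_m(\mu_j,\ell)=\mu_j^{m-1}(\mu_j-\ell)^2\exp(-\mu_j^2/2)$, the sum over $I\in\cF_k(N-1)$, and the de Bruijn identity giving the Pfaffians of $\cA^2(I,\ell)$. Your added remarks on the sign bookkeeping and the absence of a global $(-1)^{k-1}$ factor are consistent with the paper's statement.
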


\begin{proof}[Proof of Lemma \ref{lemmegammaN_1_k}]
The proof follows the same lines as in Theorem \ref{lemmedensitevpk} with 
\begin{multline*}
\gamma_{N-1,2}^k(\ell)=k_{N-1}(N-1)!\int_{{\cal O}_{k+1}(\ell)} \prod_{i=1}^{N-1}(\mu_i-\ell)^2\\
\prod_{1\leq i<j\leq N-1}(\mu_j-\mu_i)\exp\left(-\frac{\sum_{i=1}^{N-1}\mu_i^2}{2}\right)d\mu_1\ldots d\mu_{N-1}.
\end{multline*}
Then we set ${\bf{H}}({\pmb{\mu}},\ell)$ the matrix $\{h_m(\mu_j,\ell),m,j=1,\ldots,N-1\}$ with
$h_{m}(\mu_j,\ell)=\mu_j^{m-1}(\mu_j-\ell)^2\exp\left(-\mu_j^2/2\right)$.
\end{proof}

\begin{proof}[Proof of Theorem \ref{t2}]
By \eqref{eqfctcorr} we have
\begin{eqnarray*}
A^{k,k+1}(\rho)&=&p_{\nabla X(0),\nabla X(t)}(0,0)\E\left(|\det \xi(0)|\1_{\{ i(\xi(0)) = k\}}|\det \xi(t)|\1_{\{i(\xi(t)) = k+1\}}\right) \\
&=&-p_{\nabla X(0),\nabla X(t)}(0,0)\E\left(\det \left(\xi(0)\right)\1_{\{ i(\xi(0)) = k\}}\det \xi(t)\1_{\{i(\xi(t)) = k+1\}}\right).
\end{eqnarray*}
We can modify the computation of the determinant of $\xi(0)$  in \eqref{decadix}  to compute in place $ \det\Big(\xi(0) - \lambda Id_N\Big)$:
\begin{equation}\label{e:nabla}
\det\left(\xi(0) - \lambda Id_N\right) = 
\big(\xi_{11}(0) -\lambda\big) \det\left(\xi_{-1}(0) -\lambda Id_{N-1}\right) + O_p(\rho^2).
\end{equation}
Note  that as $\rho  \to 0$, $\xi (0)$ converges  in $L^2$  (or a.s. if we use a Skorohod imbedding argument)
 to 
 $$
 \left(\begin{array}{cccc}
 0 & \quad 0 & \ldots  & 0 \\
 0  \\
 \vdots  & & \xi_{-1}(0) \\
 0
  \end{array}\right).
  $$
Its eigenvectors converge  to those  of this  last matrix. These eigenvectors are associated to different  eigenvalues so we can define properly  the eigenvalue $\mu_1$  that tends to zero.
Because of  \eqref{e:nabla} with $\lambda = \mu_1$:
$$
(\xi_{11} (0) -\mu_1) \det\left(\xi_{-1}(0) -\mu_1 Id_{N-1}\right)  = O_p(\rho^2),
$$
implying in turn that 
$$
\mu_1 = \xi_{11} (0) +  O_p(\rho^2).
$$
On the other hand, the others eigenvalues, say $\mu_2,\ldots,\mu_{N}$, converge in  distribution to that 
 given by the right-hand side  of \eqref{e:xim1}. This implies in turn that 
 $$
 \1_{\{ i(\xi(0)) = k\}} \simeq 
   \1_{\{ \xi_{11}(0) <0; i(\xi_{-1}(0)) = k-1 \}} +\1_{\{\xi_{11}(0) >0; i(\xi_{-1}(0)) = k\}}.
$$
 
\noindent As a consequence, when computing $\E\left(\det \left(\xi(0)\right)\1_{\{ i(\xi(0)) = k\}}\det \xi(t)\1_{\{i(\xi(t)) = k+1\}}\right)$,  we have four cases to consider  depending on the signs of  $ \xi_{11} (0), 
 \xi_{11} (t)$.
 First  we  consider  the case $ \xi_{11} (0) >0,  \xi_{11} (t) <0$. 
 We  have  the equivalent of \eqref{e:det:1} 
\begin{multline*}
 \det\left(\xi(0)\right) \det \left(\xi(t)\right)  \1_{\{\xi_{11} (0) >0,  \xi_{11} (t)<0 \}} \1_{\{i(\xi_{-1}(t))=i(\xi_{-1}(0))=k \}}\\
 =  -\xi_{11}(t)^2 \1_{\{\xi_{11}(t)<0\}} \mbox{det}^2\left(\xi_{-1}(t)\right)\1_{\{i(\xi_{-1}(t))=k \}}  + O_p(\rho^3),
\end{multline*}
 giving 
\begin{multline*}
  \E \left(|\det\left(\xi(0)\right) \det \left(\xi(t)\right)| \1_{\{\xi_{11} (0) >0,  \xi_{11} (t)<0\}}\1_{\{i(\xi_{-1}(t))=i(\xi_{-1}(0))=k \}}  \right) \\
  =  1/2\Var(\xi_{11}(t)) \E \left( \mbox{det}^2\left(\xi_{-1}(t)\right)\1_{\{i(\xi_{-1}(t))=k \}}\right)  + O(\rho^3).
\end{multline*}
This gives  \eqref{resu2} as soon  as we have checked that the three other cases  give smaller contributions  which is direct. 
 
\end{proof}

\subsubsection{Correlation function between critical points with extreme indexes}

First, we give a bound for the correlation function between maxima and minima (Theorem \ref{theo3})
then a bound for the correlation function between maxima (or between minima) (Theorem \ref{theo4}).

\begin{theorem}\label{theo3} 
Let $  \cX= \{X(t) : t \in  \R^N, N>1\}$ be a stationary and isotropic Gaussian field  with real values satisfying Assumption (A$3$). Let 
$A^{0,N}(\rho)$ be the correlation function \eqref{eqfctcorr} between minima and maxima. Then
for all $\epsilon >0$ there exists a  positive constant $K(\epsilon)>0$ such that as $\rho \rightarrow 0$,
\begin{equation}
  A^{0,N}(\rho)\leq K(\epsilon) \times \rho^{2N-1-\epsilon}. \label{resu3}
\end{equation}
\end{theorem}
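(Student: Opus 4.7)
The plan is to start from \eqref{eqfctcorr} combined with the density estimate \eqref{eqdetvar}:
$$A^{0,N}(\rho) \sim C\rho^{-N}\,\E\!\left(|\det \xi(0)\det \xi(t)|\,\1_E\right),\qquad E:=\{\xi(0)>0,\;\xi(t)<0\},$$
so the task reduces to bounding the expectation by $K(\epsilon)\rho^{3N-1-\epsilon}$.

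The first step is to isolate the rare event. By Cauchy interlacing, on $E$ the submatrix $\xi_{-1}(0)$ is positive definite and $\xi_{-1}(t)=\xi_{-1}(0)+R$ is negative definite, where $R$ is a symmetric Gaussian matrix with entries of scale $\rho$ by equations \eqref{eqvar2}--\eqref{zero} of Lemma \ref{lemme2}. Hence $\|\xi_{-1}(0)\|_{op}\le \|R\|_{op}=O_p(\rho)$, so $E$ is a small-matrix event for $\xi_{-1}(0)$.

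Three ingredients then come into play. First, the expansion $\det \xi(t)=\xi_{11}(t)\det \xi_{-1}(t)+O_p(\rho^2)$ from \eqref{decadix}, combined with $|\det \xi_{-1}(0)|\le\|\xi_{-1}(0)\|_{op}^{N-1}\le\|R\|_{op}^{N-1}$ and similarly for $\xi_{-1}(t)$, gives on $E$ that
$$|\det\xi(0)\det\xi(t)|\le C\,|\xi_{11}(0)\xi_{11}(t)|\cdot\|R\|_{op}^{2(N-1)}+\text{lower order}.$$
Second, a Gaussian small-ball estimate: the conditional law of $\xi_{-1}(0)$ has a bounded density at $0$ on the $N(N-1)/2$-dimensional space of symmetric matrices (by Lemma \ref{lemme1}), so
$$P\!\left(\|\xi_{-1}(0)\|_{op}\le \|R\|_{op}\bigm| R\right)\le C\,\|R\|_{op}^{N(N-1)/2},$$
exploiting that $\xi_{-1}(0)$ (second derivatives) and $R$ (third derivatives) are approximately independent. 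Third, H\"older's inequality applied to the residual factor $|\xi_{11}(0)\xi_{11}(t)|$ with a large exponent, using that this factor is bounded in every Gaussian $L^p$, is what produces the $\rho^{-\epsilon}$ slack in the theorem.

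Putting these together reduces the bound to $C_\epsilon\,\E(\|R\|_{op}^{2(N-1)+N(N-1)/2})=O(\rho^{(N-1)(N+4)/2-\epsilon})$, which already yields $A^{0,N}(\rho)\le K(\epsilon)\rho^{2N-1-\epsilon}$ for $N\ge 4$. \textbf{The main obstacle is the low-dimensional case $N=2,3$}, where the crude operator-norm small-ball bound falls short of the target by a factor of $\rho^2$ at $N=2$ and $\rho$ at $N=3$. Closing this gap calls for working directly with the joint conditional density of $(\xi(0),\xi(t))$ supplied by Lemma \ref{lemmeconditionaldistribution} and exploiting the Vandermonde/GOE repulsion of the eigenvalues of $\xi_{-1}(0),\xi_{-1}(t)$ (cf.\ \eqref{eqdensitevp} and \eqref{e:xim1}) to obtain a sharper small-ball estimate restricted to the PD-vs-ND configuration; this refinement is what pins the exponent to $2N-1$ with arbitrarily small $\epsilon$.
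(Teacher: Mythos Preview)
Your proposal has a genuine gap: by your own account it fails for $N=2,3$, and the proposed remedy (``Vandermonde/GOE repulsion'') is a gesture, not an argument. There are also two bookkeeping problems. First, you treat $|\xi_{11}(0)\xi_{11}(t)|$ as merely bounded in $L^p$, whereas by \eqref{eqvarxi1j} it is $O_p(\rho^2)$; tracking this would actually improve your count. Second, on the event $E$ the ``lower order'' remainder in \eqref{decadix} is \emph{not} lower order: each such term carries a factor $\xi_{1j}\xi_{1k}=O_p(\rho^2)$ times an $(N-2)\times(N-2)$ minor of $\xi_{-1}(0)$, and on $E$ all entries of $\xi_{-1}(0)$ are $\leq\|R\|_{op}=O_p(\rho)$, so the remainder is $O_p(\rho^N)$, matching the leading term. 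Finally, even for $N\geq 4$ the approximate independence of $\xi_{-1}(0)$ and $R$ under the \emph{conditional} law (given $\nabla X(0)=\nabla X(t)=0$) would need justification.

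The paper's route is different and uniform in $N\geq 2$. It uses Hadamard's inequality ($M\succeq 0\Rightarrow\det M\leq\prod_i M_{ii}$) to get
\[
|\det\xi(0)\det\xi(t)|\,\1_E\;\leq\;\prod_{i=1}^N\xi_{ii}^+(0)\,\xi_{ii}^-(t),
\]
turning the matrix positivity/negativity into $N$ scalar sign constraints on the diagonals. After Cauchy--Schwarz on the pair $(\xi_{11}^+(0),\xi_{11}^-(t))$ and H\"older to split off $\xi_{11}^+(0)^2$ (which contributes $\rho^2$), one is left with
$I_\rho=\big(\E\,|\xi_{22}^+(0)\cdots\xi_{NN}^+(0)\,\xi_{22}^-(t)\cdots\xi_{NN}^-(t)|^p\big)^{1/p}$.
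Since $\xi_{jj}(0)-\xi_{jj}(t)=O_p(\rho)$ for each $j\geq 2$, demanding opposite signs on every pair forces the $(2N-2)$-vector $(\xi_{22}(0),\ldots,\xi_{NN}(t))$ into a $\rho$-tube; computing the explicit conditional covariance $\Sigma_\rho$ from Lemma \ref{lemmeconditionaldistribution} (giving $\det\Sigma_\rho\sim g\rho^{2(N-1)}$ and $\rho^2\Sigma_\rho^{-1}\to\tilde\Sigma$) and substituting $\tilde\xi=\rho z$ yields $I_\rho=O(\rho^{2(N-1)+(N-1)/p})$. With the density factor $\rho^{-N}$ from \eqref{e:jma:dens}, the total exponent is $-N+2+2(N-1)+(N-1)/p\to 2N-1$ as $p\downarrow 1$. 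The reason this succeeds where your scheme stalls is that Hadamard extracts exactly one scalar constraint per coordinate $j\geq 2$, each worth a factor $\rho$; your operator-norm small-ball count $N(N-1)/2$ is simply the wrong dimension to hit the target exponent uniformly.
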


Upper bound \eqref{resu3} proves strong repulsion between maxima and minima.

\begin{theorem}\label{theo4}
Let $  \cX= \{X(t) : t \in  \R^N, N>1 \} $ be a stationary and isotropic Gaussian field  with real values satisfying Assumption (A$4$). Let 
$A^{N,N}(\rho)$ (respectively $A^{0,0}(\rho)$) be
the correlation function  \eqref{eqfctcorr} between maxima (respectively minima). Then
$\forall  \epsilon >0$ there exists a constant $\tilde{K}(\epsilon )>0$ such that as $\rho \rightarrow 0$,
\begin{equation}
 A^{0,0}(\rho)=A^{N,N}(\rho)\leq \tilde{K}(\epsilon )\times \rho^{5-N-\epsilon }. \label{resu4}
\end{equation}
\end{theorem}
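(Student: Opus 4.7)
The plan is to refine the strategy of Theorem \ref{t1} by exploiting that the event $\{i(\xi(0))=N,\ i(\xi(t))=N\}$ is highly restrictive. As observed in the proof of Theorem \ref{t2}, each Hessian $\xi(\tau)$, $\tau\in\{0,t\}$, has one small eigenvalue $\mu_\ast(\tau)=\xi_{11}(\tau)+O_p(\rho^2)$ of order $\rho$ while the remaining $N-1$ eigenvalues converge in law to those of $\sqrt{2\lambda_4/3}\,(G_{N-1}-\Lambda Id_{N-1})$. For both $0$ and $t$ to be maxima we need in particular $\xi_{11}(0)<0$ and $\xi_{11}(t)<0$, and by \eqref{eqvarxi1j} and \eqref{eqcov2} this pair is asymptotically Gaussian with common variance $\simeq$ const$\cdot\rho^{2}$ and correlation tending to $-1$; the rareness of the event $\{\xi_{11}(0)<0,\,\xi_{11}(t)<0\}$ is what will upgrade Theorem \ref{t1} to a repulsion statement.

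By \eqref{eqfctcorr} and \eqref{e:jma:dens}, the density factor contributes $\rho^{-N}$ up to a constant, so it suffices to bound
\[
E:=\E\!\left(|\det\xi(0)\det\xi(t)|\,\1_{\{i(\xi(0))=N\}}\1_{\{i(\xi(t))=N\}}\right).
\]
I would expand $\det\xi(\tau)=\xi_{11}(\tau)\det\xi_{-1}(\tau)+R(\tau)$ with $R(\tau)=O_p(\rho^2)$ as in \eqref{decadix}, and use the argument from the proof of Theorem \ref{t2} to replace $\1_{\{i(\xi(\tau))=N\}}$ by $\1_{\{\xi_{11}(\tau)<0\}}\1_{\{i(\xi_{-1}(\tau))=N-1\}}$ up to negligible terms. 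The leading contribution to $E$ then factorises as
\[
\E\!\left(\xi_{11}(0)^-\,\xi_{11}(t)^-\right)\;\E\!\left(\det{}^{2}\xi_{-1}(t)\,\1_{\{i(\xi_{-1}(t))=N-1\}}\right),
\]
which is justified because $(\xi_{11}(0),\xi_{11}(t))$ is asymptotically independent of $(\xi_{-1}(0),\xi_{-1}(t))$ (the relevant cross-covariances are $O(\rho^2)$ by \eqref{eqvarxijk}, \eqref{zero1}, \eqref{eqvarxi11}, \eqref{zero}, whereas the standard deviations of entries of $\xi_{-1}$ are $O(1)$) and because $\xi_{-1}(0)\simeq\xi_{-1}(t)$ by \eqref{eqcov2}.

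The decisive estimate is on the first factor. The pair $(-\xi_{11}(0),-\xi_{11}(t))$ is centred Gaussian with variance $\sigma^{2}\simeq$ const$\cdot\rho^{2}$ and with $\det\Var\simeq$ const$\cdot\rho^{6}$ by \eqref{determinant1}; since its correlation tends to $-1$, Lemma \ref{Mario} with $r=1$ yields
\[
\E\!\left(\xi_{11}(0)^-\,\xi_{11}(t)^-\right)\simeq \frac{1}{6\pi}\,\sigma^{-4}(\det\Var)^{3/2}=O\!\left(\rho^{-4}\cdot\rho^{9}\right)=O(\rho^{5}),
\]
while the second factor is a finite GOE quantity of order one by \eqref{eqdet2}. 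Multiplying by $\rho^{-N}$ delivers the target order $\rho^{5-N}$; the equality $A^{0,0}(\rho)=A^{N,N}(\rho)$ then follows from the sign symmetry $X\mapsto -X$ that exchanges maxima and minima.

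The main obstacle will be to control the remainders. The cross-terms $\xi_{11}(0)R(t)$, $R(0)\xi_{11}(t)$ and $R(0)R(t)$ are pointwise of orders $\rho^{3}$, $\rho^{3}$ and $\rho^{4}$, yet they cannot be integrated against $\1_{\{i(\xi(0))=N,\,i(\xi(t))=N\}}$ by a naive product of probabilities. I would use Hölder's inequality $|\E(X\1_A)|\leq \E(|X|^p)^{1/p}\P(A)^{1-1/p}$ together with the Borel--Sudakov--Tsirelson control invoked at the end of the proof of Theorem \ref{t1} (which gives arbitrarily high $L^p$ bounds on the Gaussian polynomials involved, using here the full strength of Assumption (A$4$)) and with the orthant estimate $\P(\xi_{11}(0)<0,\,\xi_{11}(t)<0)=O(\rho)$, which follows from the correlation being $-1+O(\rho^{2})$ via the classical formula $\tfrac14+\tfrac1{2\pi}\arcsin(c)$. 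Sending $p\to\infty$ produces the loss of a factor $\rho^{-\epsilon}$ and yields the announced bound $\rho^{5-N-\epsilon}$.
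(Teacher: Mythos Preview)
Your strategy is genuinely different from the paper's. The paper does \emph{not} expand the determinant as in Theorems~\ref{t1}--\ref{t2}; instead it applies Hadamard's inequality $\det(M')\1_{\{i(M')=0\}}\le\prod_i (M'_{ii})^+$ to replace $|\det\xi(\tau)|\1_{\{i(\xi(\tau))=0\}}$ by $\prod_i\xi_{ii}(\tau)^+$, then uses Cauchy--Schwarz and a single H\"older step to isolate $\E\big((\xi_{11}(0)^+\xi_{11}(t)^+)^p\big)^{1/p}$, which by Lemma~\ref{Mario} (with $r=p$) is of order $\rho^{4+1/p}$. No remainder terms ever appear, which is the main advantage of that route.

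Your route via the expansion \eqref{decadix} can be made to work, but there is a gap in your remainder control. With your H\"older split $|\E(X\1_A)|\le\|X\|_p\,\P(A)^{1-1/p}$ applied to $X=\xi_{11}(0)\det\xi_{-1}(0)R(t)$ and $A\subset\{\xi_{11}(0)<0,\xi_{11}(t)<0\}$, you get $\|X\|_p=O(\rho^{3})$ and $\P(A)=O(\rho)$, hence only $O(\rho^{4-1/p})$, i.e.\ $\rho^{4-\epsilon}$ after $p\to\infty$. That is one power short of $\rho^{5}$ and would yield $\rho^{4-N-\epsilon}$, not $\rho^{5-N-\epsilon}$. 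The probability estimate alone does not capture that $\xi_{11}(0)$ is also forced to be small on this event.

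The fix is to keep the factor $\xi_{11}(0)$ paired with the indicator in the H\"older split: bound the cross term by $\|\det\xi_{-1}(0)R(t)\|_p\cdot\|\xi_{11}(0)^-\1_{\{\xi_{11}(t)<0\}}\|_q$. The first norm is $O(\rho^{2})$; for the second, the same scaling argument as in Lemma~\ref{Mario} (with unequal exponents $r_1=q$, $r_2=0$) gives $\E\big((\xi_{11}(0)^-)^q\1_{\{\xi_{11}(t)<0\}}\big)=O(\rho^{2q+1})$, hence $\|\xi_{11}(0)^-\1_{\{\xi_{11}(t)<0\}}\|_q=O(\rho^{2+1/q})$. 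The product is then $O(\rho^{4+1/q})\to\rho^{5-\epsilon}$ as $q\to1$, matching the leading term. Similarly, your leading-term ``factorisation'' should be replaced by a H\"older bound $\E(\xi_{11}(0)^-\xi_{11}(t)^-\,|\det\xi_{-1}(0)\det\xi_{-1}(t)|)\le\big(\E((\xi_{11}(0)^-\xi_{11}(t)^-)^p)\big)^{1/p}\cdot O(1)$, which again gives $\rho^{4+1/p}$ by Lemma~\ref{Mario}; asymptotic independence is neither needed nor quite sufficient at this level of precision.
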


Upper bound \eqref{resu4} proves repulsion between maxima (or minima) for $N<5$. 

In the particular case $N=1$, note that, strictly speaking, Proposition \ref{p:crit} gives  no equivalent of Theorem  \ref{theo3}. In fact, since  a maxima is followed by a minima and reciprocally it is easy  to deduce from  \eqref{e:jma:1} that we have \eqref{resu3} with $\epsilon =0$. On the other hand \eqref{e:jma:2}  implies that the result of Theorem \ref{theo4} holds true  with $\epsilon =0$.

As this work was achieved we have discovered the  work \cite{BCW2} which is similar to the result of this section but limited to the dimension $N=2$.  In this last   particular case Theorem \ref{t1} is the analogue of Theorem 1.2 of  \cite{BCW2};
Theorem \ref{t2}  has no equivalent  in   \cite{BCW2}; Theorems \ref{theo3} and \ref{theo4} are similar but a little weaker  than 
Theorems 1.3 and 1.4 of  \cite{BCW2}  that are obtained  by a difficult  diagonalization of the Hessian  which is absolutely impossible in the case $N>2$.

\begin{proof}[Proof of Theorem \ref{theo3}]

\noindent We use Hadamard's inequality. If $M $  is a positive semidefinite matrix of size $N$
$$
\det(M) \leq M_{11} \times \cdots \times M_{NN}.$$
As a consequence  for any symmetric matrix $M'$
\begin{equation}
\det(M') \1_{\{i(M')=0\}}  \leq   (M'_{11})^+ \times \cdots \times( M'_{NN})^+ \ . \label{eqdetM}
\end{equation}
By Kac-Rice formula \eqref{eqfctcorr}
\begin{equation*}
A^{0,N}(\rho)=p_{\nabla X(0),\nabla X(t)}(0,0)\E\left(|\det \xi(0)|\1_{\{ i(\nabla^2 \xi(0)) = 0\}}|\det \xi(t)|\1_{\{i(\nabla^2 \xi(t)) = N\}}\right).
\end{equation*}
We have
$$
A^{0,N}(\rho) \leq (-1)^N p_{\nabla X(0),\nabla X(t)}(0,0)\E\left(\xi_{11}^+(0)\xi_{22}^+(0)\ldots \xi_{NN}^+(0)\xi_{11}^-(t)\xi_{22}^-(t)\ldots \xi_{NN}^-(t)\right).
$$
By Cauchy-Schwarz inequality and symmetry of the role of $0$ and $t$ we obtain
$$
A^{0,N}(\rho) \leq  (-1)^{N-1}p_{\nabla X(0),\nabla X(t)}(0,0)\E\left(\xi_{11}^+(0)^2\xi_{22}^+(0)\ldots \xi_{NN}^+(0)\xi_{22}^-(t)\ldots \xi_{NN}^-(t)\right).
$$
By H\"older's inequality for every  $p>1$
$$
A^{0,N}(\rho) \leq  p_{\nabla X(0),\nabla X(t)}(0,0)\times I_{\rho} \times \left(\E\left(\xi_{11}^+(0)^{2p/(p-1)}\right)\right)^{(p-1)/p}.
$$
where $\displaystyle I_{\rho}:=\left(\E\left(|\xi_{22}^+(0)\ldots \xi_{NN}^+(0)\xi_{22}^-(t)\ldots \xi_{NN}^-(t)|^p\right)\right)^{1/p}$.
Since $\xi_{11}(0)$ is centred Gaussian with variance $\displaystyle \rho^2 \frac{\lambda_2\lambda_6-\lambda_4^2}{4\lambda_2}$, there exists a constant $K(p)>0$ depending on $p$ such that
$$
\left(\E\left(\xi_{11}^+(0)^{2p/(p-1)}\right)\right)^{(p-1)/p}\leq \rho^2K(p).
$$
By \eqref{e:jma:dens} we have $\displaystyle p_{\nabla X(0),\nabla X(t)}(0,0)=O(\rho^{-N})$. Now let us prove that
\begin{equation}
I_{\rho} =O\left(\rho^{(N-1)(2+1/p)}\right).\label{eqI}
\end{equation}
We set  $ \tilde \xi:=\left(\xi_{22}(0),\ldots,\xi_{NN}(0),\xi_{22}(t),\ldots,\xi_{NN}(t)\right)$, $\tilde \xi_i$ the $i$th coordinate of $\tilde \xi$ for $i=1,\ldots,2N-2$,
$\tilde \xi_{-i}$ (resp. $\tilde \xi_{-(i,j)}$) the vector $\tilde \xi$ without its $i$th (resp. $i$th and $j$th) coordinate(s).
We set $\Sigma_\rho:=\Var (\tilde \xi)$. We define ${\cal S}=\{x\in \R^{2N-2}:x_1>0,\ldots,x_{N-1}>0,x_N<0,\ldots,x_{2N-2}<0\}$. Then
\begin{equation}\label{Irho1}
I_{\rho}=\left(\int_{{\cal S}
} \frac{|\tilde \xi_{1}\ldots \tilde \xi_{2N-2}|^p}{(2\pi)^{(2N-2)/2}\sqrt{\det \Sigma_{\rho}}}\exp\left(-\frac{1}{2}\tilde \xi \Sigma_{\rho}^{-1} \tilde \xi^\top  \right)d\tilde \xi\right)^{1/p}.
\end{equation}
\noindent $\bullet$ {\underline {Equivalent of $\det (\Sigma_\rho)$ as $\rho \rightarrow 0$}}\\
\\
\noindent For short we set $X_i:=X_i(0)$ and $X_{ij}:=X_{ij}(0)$.
$$
\det \Sigma_\rho=\frac{\det \Var(X_{22},\ldots,X_{NN},X_{22}(t),\ldots,X_{NN}(t),X_1,\ldots,X_{N},X_1(t),\ldots,X_{N}(t))}{\det \Var(X_1,\ldots,X_{N},X_1(t),\ldots,X_{N}(t))}.
$$
%Since the determinant is invariant by adding to some row (resp. column) a linear combination of other rows (resp. columns)
%\begin{equation}\label{eq:detsigmarho}
%\det \Sigma_\rho = \frac{\det \Var(\zeta)}
%{\det \Var (X_1,\ldots,X_N,X_{1}(t)-X_1,\ldots,X_{N}(t)-X_N)}
%\end{equation}
%where $\displaystyle \zeta:=(X_{22},\ldots, X_{NN}, X_{22}(t)-X_{22},\ldots, X_{NN}(t)-X_{NN},X_1,\ldots, X_N,X_{1}(t)-X_1,\ldots, X_{N}(t)-X_N)$.
Using Trick \ref{trick1}, we obtain as $\rho \rightarrow 0$
\begin{equation*}
\det \Sigma_\rho \simeq \rho^{2(N-1)}\frac{\det \Var(X_{22},\ldots,X_{NN},X_{122},\ldots,X_{1NN},X_1,\ldots,X_N,X_{11},\ldots,X_{1N})}{\det \Var (X_1,\ldots,X_N, X_{11},\ldots, X_{1N})}.
\end{equation*}
Using Lemma \ref{lemme1}, \eqref{equation:determinant1} and \eqref{equation:determinant3} we get, as $\rho \rightarrow 0$
\begin{align*}
\det \Sigma_\rho \simeq& \rho^{2(N-1)} \frac{\det \Var(X_{11},\ldots,X_{NN}) \det \Var(X_1,X_{122},\ldots,X_{1NN})}{ \Var (X_1)\Var(X_{11})}\\
\simeq& \rho^{2(N-1)}(N+2)\left(\frac{2\lambda_4}{3}\right)^{N-1}\left(\frac{2\lambda_6}{15}\right)^{N-2}\left(\frac{3(N+1)\lambda_2\lambda_6-5(N-1)\lambda_4^2}{135\lambda_2}\right).
\end{align*}
So
$$
\rho^{2(1-N)} \det \Sigma_\rho \simeq g:=(N+2)\left(\frac{2\lambda_4}{3}\right)^{N-1}\left(\frac{2\lambda_6}{15}\right)^{N-2}\left(\frac{3(N+1)\lambda_2\lambda_6-5(N-1)\lambda_4^2}{135\lambda_2}\right).
$$
By \eqref{eq0:lemme1} we have $\displaystyle \lambda_2\lambda_6\geq \frac{5(2+N)}{3(4+N)}\lambda_4^2$. Since $\displaystyle \frac{5(N-1)}{3(N+1)}<\frac{5(2+N)}{3(4+N)}$ we deduce that $g>0$ under our hypotheses.\\
\\
\noindent $\bullet$ {\underline {Equivalent of $\Sigma_\rho^{-1}$ as $\rho \rightarrow 0$}}.\\
\\
Because of the exchangeability  of the coordinates $2,\ldots,N$
$$
\Sigma_\rho^{-1}:=\left(
\begin{array}{c|c}
 \alpha_{\rho}Id_{N-1}+\beta_{\rho}J_{N-1,N-1} & \gamma_{\rho}Id_{N-1}+\delta_{\rho}J_{N-1,N-1} \\
 \hline
 \gamma_{\rho}Id_{N-1}+\delta_{\rho}J_{N-1,N-1} & \alpha_{\rho}Id_{N-1}+\beta_{\rho}J_{N-1,N-1}
\end{array}
\right).
$$
\noindent Using the classical expression  of conditional variance and covariance
\begin{align*}
\left(\Sigma_\rho^{-1}\right)_{ii}&=\frac{1}{\Var(\tilde \xi_{i}/\tilde \xi_{-i})}=\frac{\det \Var(\tilde \xi_{-i})}{\det \Var (\tilde \xi)}\\
\left(\Sigma_\rho^{-1}\right)_{ij}&=\frac{-\Cov(\tilde \xi_i,\tilde \xi_j / \tilde \xi_{-(i,j)})}{\det \Var(\tilde \xi_i,\tilde \xi_j/ \tilde \xi_{-(i,j)})}
=\frac{\Var(\tilde \xi_{i}/ \tilde \xi_{-(i,j)})+ \Var (\tilde \xi_{j}/ \tilde \xi_{-(i,j)})- \Var(\tilde \xi_i+\tilde \xi_j/ \tilde \xi_{-(i,j)})}{2\det \Var(\tilde \xi_i,\tilde \xi_j/ \tilde \xi_{-(i,j)})}\\
&=\frac{\det \Var(\tilde \xi_{-j})+\det \Var (\tilde \xi_{-i})-\det \Var(\tilde \xi_i+\tilde \xi_j,\tilde \xi_{-(i,j)})}{2\det \Var(\tilde \xi)},
\end{align*}
and using the same techniques as those previously used for the calculation of the equivalent of $\det (\Sigma_\rho)$ and for the proof of Lemma \ref{lemme2},
we obtain the equivalents of $\left(\Sigma_\rho^{-1}\right)_{11}$, $\left(\Sigma_\rho^{-1}\right)_{12}$, $\left(\Sigma_\rho^{-1}\right)_{1N}$ and $\left(\Sigma_\rho^{-1}\right)_{1(N+1)}$.  \\
We get
$\displaystyle \rho^2 \alpha_{\rho} \simeq \alpha := \frac{15}{2\lambda_6}$,
$\displaystyle \rho^2 \beta_{\rho} \simeq \beta := \frac{15}{2\lambda_6} \frac{5\lambda_4^2-3\lambda_2\lambda_6}{3(N+1)\lambda_2\lambda_6-5(N-1)\lambda_4^2}$,
$\displaystyle \rho^2 \gamma_{\rho} \simeq -\alpha$ and $\displaystyle \rho^2 \delta_{\rho} \simeq -\beta$. So $\rho^2 \Sigma_{\rho}^{-1}$ converges to
$$ \tilde{\Sigma}:=\left(
\begin{array}{c|c}
 \alpha Id_{N-1}+\beta J_{N-1,N-1} & -\alpha Id_{N-1}-\beta J_{N-1,N-1} \\
 \hline
 -\alpha Id_{N-1}-\beta J_{N-1,N-1} & \alpha Id_{N-1}+\beta J_{N-1,N-1}
\end{array}
\right)
$$
where the matrix $\displaystyle \alpha Id_{N-1}+\beta J_{N-1,N-1}$ is positive definite under our conditions. Indeed by \eqref{eqdetxid+yj}
$$
\det(\alpha Id_{n}+\beta J_{n,n})=\alpha^{n-1}(\alpha+n\beta),
$$
and we have $\alpha>0$ and $$\alpha+n\beta=\frac{15}{2\lambda_6}\frac{3(N+1-n)\lambda_2\lambda_6-5(N-1-n)\lambda_4^2}{3(N+1)\lambda_2\lambda_6-5(N-1)\lambda_4^2}>0\mbox{ }\forall n=1,\ldots, N-1$$
under our conditions.
\\
\\
\noindent $\bullet$ {\underline {Formal finite limit}}\\
\\
We make the change of variables $\tilde \xi =\rho z$ in \eqref{Irho1} to obtain
\begin{multline}
I_{\rho}=\rho^{(2N-2)+(N-1)/p} \\
\times \left(\int_{{\cal S}
} \frac{|z_{1}\ldots z_{2N-2}|^p}{(2\pi)^{(2N-2)/2}\sqrt{\rho^{2(1-N)}\det \Sigma_{\rho}}}\exp\left(-\frac{1}{2}z \left(\rho^2 \Sigma_{\rho}^{-1}\right) z^\top  \right)dz\right)^{1/p}.\label{integrand}
\end{multline}
We set $\bar{z}_1:=(z_1,\ldots,z_{N-1})$, $\bar{z}_2:=(z_N,\ldots,z_{2N-2})$ and
$$
f_{\rho}(z):=\frac{|z_{1}\ldots z_{2N-2}|^p}{(2\pi)^{(2N-2)/2}\sqrt{\rho^{2(1-N)}\det \Sigma_{\rho}}}\exp\left(-\frac{1}{2}z \left(\rho^2 \Sigma_{\rho}^{-1}\right) z^\top  \right).
$$
As $\rho \rightarrow 0$, $f_{\rho}(z)$ converges to 
$
\displaystyle \frac{|z_{1}\ldots z_{2N-2}|^p}{(2\pi)^{(2N-2)/2}\sqrt{g}}\exp\left(-\frac{1}{2}z \tilde{\Sigma} z^\top  \right)
$
equals to
$$
\frac{|z_{1}\ldots z_{2N-2}|^p}{(2\pi)^{(2N-2)/2}\sqrt{g}}\exp \left(-\frac{1}{2}(\bar{z}_1-\bar{z}_2)( \alpha Id_{N-1}+\beta J_{N-1,N-1}) (\bar{z}_1-\bar{z}_2)^\top  \right)
$$
that is integrable on ${\cal S}$ since $g>0$ and $\displaystyle \alpha Id_{N-1}+\beta J_{N-1,N-1}$ is positive definite.\\
\\
\noindent $\bullet$ {\underline {Domination}}\\
\\
Let us choose $g_0$, $\alpha_0$ and $\beta_0$ such that $0<g_0<g$, $0<\alpha_0<\alpha$ and $\beta_0<\beta$ such that $0<\alpha_0+n\beta_0$ $\forall n=1,\ldots, N-1$
which is always possible. Then for $\rho$ sufficiently small
\begin{equation}\label{majoration}
f_{\rho}(z)\leq \frac{|z_{1}\ldots z_{2N-2}|^p}{(2\pi)^{(2N-2)/2}\sqrt{g_0}}\exp \left(-\frac{1}{2}(\bar{z}_1-\bar{z}_2)( \alpha_0 Id_{N-1}+\beta_0 J_{N-1,N-1}) (\bar{z}_1-\bar{z}_2)^\top  \right).
\end{equation}
Since $\alpha_0>0$ and $\alpha_0+n\beta_0>0$ $\forall n=1,\ldots, N-1$, $\displaystyle \alpha_0 Id_{N-1}+\beta_0 J_{N-1,N-1}$ is positive definite and the right hand side of \eqref{majoration}
is integrable on ${\cal S}$. Finally we conclude the proof taking $p=1-\epsilon$.
\end{proof}

\begin{proof}[Proof of Theorem \ref{theo4}]

\vspace{0.5cm}

\noindent We follow the same lines as  in the proof of Theorem \ref{theo3}.
By Kac-Rice formula \eqref{eqfctcorr}:
\begin{equation*}
A^{0,0}(\rho)=p_{\nabla X(0),\nabla X(t)}(0,0)\E\left(|\det \xi(0)|\1_{\{ i(\nabla^2 \xi(0)) = 0\}}|\det \xi(t)|\1_{\{i(\nabla^2 \xi(t)) = 0\}}\right).
\end{equation*}
Using \eqref{eqdetM}, Cauchy-Schwarz inequality and symmetry of the role of $0$ and $t$ we get
\begin{equation*}
A^{0,0}(\rho) \leq p_{\nabla X(0),\nabla X(t)}(0,0)\E\left(\xi_{11}^+(0)\xi_{11}^+(t)\xi_{22}^+(0)^2\ldots \xi_{NN}^+(0)^2\right).
\end{equation*}
By \eqref{e:jma:dens} we have $\displaystyle p_{\nabla X(0),\nabla X(t)}(0,0)=O(\rho^{-N})$. Now let us prove that
\begin{equation}
\breve{I}_{\rho}:=\E\left(\xi_{11}^+(0)\xi_{11}^+(t)\xi_{22}^+(0)^2\ldots \xi_{NN}^+(0)^2\right)=O(\rho^{4+1/p}).\label{e:i2}
\end{equation}
By H\"older's inequality for every  $p>1$
\begin{equation} \label{eq:Iteche}
\breve{I}_{\rho}\leq \left(\E\left(\xi_{11}^+(0)^p\xi_{11}^+(t)^p\right)\right)^{1/p}\left(\E\left(\xi_{22}^+(0)^{2p/(p-1)}\ldots \xi_{NN}^+(0)^{2p/(p-1)}\right)\right)^{(p-1)/p}.
\end{equation}
Since $\Cov (\xi_{11}(0),\xi_{11}(t))\simeq -\Var(\xi_{11}(t))$ (see Lemma \ref{lemme2}), by Lemma \ref{Mario}
\begin{equation}\label{eq:x_110x_11t}
\E\left(\xi_{11}^+(0)^p\xi_{11}^+(t)^p\right)\simeq \mbox{(const)}\frac{1}{(\Var(\xi_{11}(t)))^{1+p}}(\det(\Var(\xi_{11}(0),\xi_{11}(t))))^{p+1/2}.
\end{equation}
By Lemma \ref{lemme2} we have
\begin{align*}
\Var(\xi_{11}(t))&\simeq \frac{\rho^2}{4}\frac{(\lambda_2\lambda_6-\lambda_4^2)}{\lambda_2},\\
\det(\Var(\xi_{11}(0),\xi_{11}(t)))&\simeq \frac{\rho^6}{144} \frac{(\lambda_2\lambda_6-\lambda_4^2)(\lambda_4\lambda_8-\lambda_6^2)}{\lambda_2\lambda_4},
\end{align*}
and by \eqref{eq:x_110x_11t} we deduce
\begin{equation}\label{conclusion1}
\left(\E\left(\xi_{11}^+(0)^p\xi_{11}^+(t)^p\right)\right)^{1/p}\simeq \mbox{(const)}\rho^{4+1/p}.
\end{equation}
We now consider the term $\displaystyle \left(\E\left(\xi_{22}^+(0)^{2p/(p-1)}\ldots \xi_{NN}^+(0)^{2p/(p-1)}\right)\right)^{(p-1)/p}$ in \eqref{eq:Iteche}.\\
We set  $\displaystyle \breve{\xi}:=\left(\xi_{22}(0),\xi_{33}(0),\ldots,\xi_{NN}(0)\right)$.
$\breve{\xi}$ is a Gaussian vector centred with variance-covariance matrix 
$$
\Var(\breve{\xi})=(a_{\rho}Id_{N-1}+b_{\rho}J_{N-1,N-1})
$$
explicitly given in Lemma \ref{lemmeconditionaldistribution}. According to Lemma \ref{lemme2}, as $\rho \rightarrow 0$,
$$
a_{\rho} \rightarrow  \frac{2\lambda_4}{3} \mbox{ and }b_{\rho} \rightarrow \frac{2\lambda_4}{9}.
$$
Using Lebesgue's dominated convergence theorem we can deduce that there exists a constant $C(p)>0$ such that, as $\rho \rightarrow 0$
\begin{equation}\label{conclusion3}
\E\left(\xi_{22}^+(0)^{2p/(p-1)}\ldots \xi_{NN}^+(0)^{2p/(p-1)}\right) \simeq C(p).
\end{equation}
\eqref{conclusion3} and \eqref{conclusion1} give \eqref{e:i2}. Finally we conclude the proof taking $p=1-\epsilon$.
\end{proof}

\appendix
\section{Validity of Kac-Rice formulas for stationary Gaussian processes and fields} \label{a:1}
% Note that, when we are considering critical points, the relevant random process or field is the derivative  $X'(\cdot) $ which is a random field from $\R^N$ to $\R^N$. 
 %\subsection{Gaussian processes $N=1$} 

 %The  Kac-Rice formula  of order 1 for Stationary Gaussian  process with a continuous  covariance  is always true \cite {CL}: both members are equal, finite or infinite.   We have the same result, with the same hypothesis  and the same reference   for counting the number of  up- and down-crossings of the derivative that corresponds  to  maxima and minima. 
  
 % For the second moment of the number of zeroes, if the second spectral moment of $X'(\cdot)$  which is $\lambda_4$ satisfies , $0<\lambda_4<+\infty $ (which is the case under $A_2 $ ),  the formula  is always true for a set $S$ sufficiently small (which is sufficient for our applications ): to apply, for example  the result of \cite {azais} we need    the non degeneracy  of the distribution of $X'(0), X'(t)$ for $t$ sufficiently small. This is a particular case of the proof   of Proposition \ref{p:rice2}  for $N>1$ given in the next section.
 
 %
 % \subsection{Gaussian random fields  $N>1$}
Let ${\cX}=\{X(t):t\in U\subset \R^N\}$ be a zero-mean, stationary Gaussian random field defined on an open set $U\subset \R^N$. We follow chapter 6 of \cite {azais}. So we need $\nabla X(\cdot )$  to be $C^1$.  
  Recall that $\mathcal N^c(S)$, $\mathcal N_k^c(S)$ and $\mathcal N_k^c(u,S)$ are respectively the number of critical points, the number of critical points with index $k$
and the number of critical points with index $k$ above the level $u$ of the random field  $X(\cdot)$  in a Borel set $S\subset U$ .
 
 \begin{prop} \label{p:riceN}
 Let ${\cX}=\{X(t):t\in U\subset \R^N\}$ be a zero-mean, stationary Gaussian random field defined on an open set $U\subset \R^N$.
Suppose that $\cX$  is $C^2$ and that the distribution of $\nabla^2 X(t)$ does not degenerate,  then the Kac-Rice formula of order  one is always true  in the sense that for every Borel set $S \subset U$
 \begin{align}
 \E\left({\cN}^c(S)\right)&=|S| p_{\nabla X(t)}(0)  \E \left( | \det(\nabla^2 X(t))| \right)  \label{eq:rice1}   \\
 \E\left({\cN}_k^c(S)\right)&=|S|  p_{\nabla X(t)}(0)  \E \left( | \det(\nabla^2 X(t))|  \1_{i(\nabla^2 X(t)) =k} \right)  \label{eq:rice2} 
\end{align}
\begin{multline}
 \E\left({\cN}_k^c(u,S)\right)=|S|  p_{\nabla X(t)}(0) \\
\times \int_u^{+\infty}p_{X(t)}(x) \E \left( | \det(\nabla^2 X(t))|  \1_{i(\nabla^2 X(t)) =k} /X(t)=x \right) . \label{eq:rice3}
 \end{multline}
  In addition the sample paths are  a.s.  Morse functions. 
\end{prop}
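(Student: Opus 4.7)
The plan is to deduce formulas \eqref{eq:rice1}--\eqref{eq:rice3} from the classical Kac--Rice theorem of order one applied to the $\R^N$-valued $C^1$ random field $Y := \nabla X$, and then to establish the Morse property by a Bulinskaya-type argument that reduces to a trivial Kac--Rice computation. The stationarity of $\cX$ is what turns the integral over $S$ into a product $|S|\,p_{\nabla X(t)}(0)\,\E(\cdots)$.

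First I would verify the hypotheses of the Kac--Rice theorem of order one (e.g.\ Theorem 6.2 of \cite{azais}) for $Y:U\to\R^N$: the map $Y$ is $C^1$ since $X$ is $C^2$; the density $p_{Y(t)}$ of $Y(t)=\nabla X(t)$ at $0$ is finite and continuous because $\nabla X(t)$ is Gaussian and its variance matrix $\lambda_2 Id_N$ is non-degenerate (as a consequence of Lemma \ref{lemme1}, and in fact of Assumption (A$2$)); and the divergence $Y'(t)=\nabla^2 X(t)$ has a non-degenerate law, so $\P(\det\nabla^2 X(t)=0)=0$ for each $t$. Together with standard tightness/local finiteness arguments (using that $X$ is $C^2$ to bound the number of zeros of $\nabla X$ on compacts almost surely), this yields
\[
\E(\cN^c(S)) = \int_S \E\bigl(|\det\nabla^2 X(t)|\,\big/\,\nabla X(t)=0\bigr)\,p_{\nabla X(t)}(0)\,dt.
\]
By stationarity the integrand is constant in $t$, and by Lemma \ref{lemme1} (the entries of $\nabla X(t)$ and $\nabla^2 X(t)$ at the \emph{same} point have covariances that vanish because of the parity constraints on $i_1+j_1,\ldots,i_N+j_N$) the vectors $\nabla X(t)$ and $\nabla^2 X(t)$ are independent Gaussian. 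Hence the conditional expectation equals the unconditional one, giving \eqref{eq:rice1}.

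Next I would establish the Morse property. Consider the event $E := \{\exists\,t\in U: \nabla X(t)=0 \text{ and } \det\nabla^2 X(t)=0\}$. Cover $U$ by countably many bounded Borel sets $S_n$ and apply the same Kac--Rice formula with the extra indicator $\1_{\{\det\nabla^2 X(t)=0\}}$ on each $S_n$:
\[
\E\bigl(\#\{t\in S_n : \nabla X(t)=0,\ \det\nabla^2 X(t)=0\}\bigr)
=|S_n|\,p_{\nabla X(t)}(0)\,\E\bigl(|\det\nabla^2 X(t)|\,\1_{\{\det\nabla^2 X(t)=0\}}\bigr)=0,
\]
since the integrand vanishes pointwise. (To rigorously justify inserting the indicator one uses a continuous approximation of $\1_{\{\det=0\}}$ and monotone/dominated convergence; the hypothesis that the law of $\nabla^2 X(t)$ is non-degenerate ensures $\det\nabla^2 X(t)$ has a continuous distribution, so $\P(\det\nabla^2 X(t)=0)=0$.) Summing over $n$ gives $\P(E)=0$, so almost surely every critical point of $X$ is non-degenerate and the paths are Morse.

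Once the Morse property is known, the index $i(\nabla^2 X(t))$ is well defined at every critical point a.s., and the same Kac--Rice machinery applied with the additional indicator $\1_{\{i(\nabla^2 X(t))=k\}}$ (again justified by approximation since the event $\{\det\nabla^2 X(t)=0\}$ is negligible, so the index is a.s.\ a locally constant function of the Hessian on the support of the conditional law) yields \eqref{eq:rice2}. Formula \eqref{eq:rice3} is then obtained by conditioning further on $X(t)=x$: writing
\[
\E\bigl(|\det\nabla^2 X(t)|\,\1_{i(\nabla^2 X(t))=k}\,\1_{X(t)>u}\bigr)
=\int_u^{+\infty} p_{X(t)}(x)\,\E\bigl(|\det\nabla^2 X(t)|\,\1_{i(\nabla^2 X(t))=k}\,\big/\,X(t)=x\bigr)\,dx,
\]
and using that $X(t)$ is independent of $\nabla X(t)$ at the same point (again by Lemma \ref{lemme1}), so the conditioning with respect to $X(t)=x$ and $\nabla X(t)=0$ factorises.

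The main obstacle I expect is the rigorous justification of inserting the indicator $\1_{\{\det\nabla^2 X(t)=0\}}$ into the Kac--Rice expectation under only $C^2$ regularity; this is where one needs a careful approximation argument (or, alternatively, the extended version of Bulinskaya's lemma that applies to Lipschitz maps $U\to\R^{N+1}$ whose projection on the first $N$ coordinates has a bounded density). All the remaining steps -- stationarity, independence of $\nabla X(t)$ and $\nabla^2 X(t)$, level-set conditioning -- are routine manipulations.
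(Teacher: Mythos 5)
There is a genuine gap, and it sits exactly at the one point the paper identifies as non-trivial: the Morse property, i.e.\ condition (iv) of Theorem 6.2 in \cite{azais}, $\P\{\exists t : \nabla X(t)=0,\ \det \nabla^2 X(t)=0\}=0$. Your argument is circular. You first apply the Kac--Rice theorem to $\nabla X$ to get \eqref{eq:rice1}, glossing over (iv) with "standard tightness/local finiteness arguments", and you then prove the Morse property by applying "the same Kac--Rice formula" with the extra indicator $\1_{\{\det \nabla^2 X(t)=0\}}$. But both the unweighted formula (Theorem 6.2) and the weighted one (Theorem 6.4) have the a.s.\ absence of degenerate zeros of $\nabla X$ as a hypothesis, so you are using the conclusion to prove itself. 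Note also that the pointwise statement $\P(\det\nabla^2 X(t)=0)=0$ for each fixed $t$ (which does follow from the assumed non-degeneracy of $\nabla^2 X(t)$) does not by itself give the simultaneous statement over the uncountable family of critical points; some Bulinskaya-type argument is unavoidable, and you only mention one as an "alternative" without carrying it out. Even that alternative needs care: applying the extended Bulinskaya lemma to $t\mapsto(\nabla X(t),\det\nabla^2 X(t))$ requires a locally bounded joint density of this $(N+1)$-dimensional vector, which is not immediate since $\det\nabla^2 X(t)$ is a polynomial in Gaussian variables.

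The paper closes this gap differently and in the right order: it first verifies (iv) via Proposition 6.5 of \cite{azais} with condition b), using stationarity (plus an exhaustion argument) to dispense with compactness, and the independence of $\nabla X(t)$ and $\nabla^2 X(t)$ to reduce condition b) to $\P\{|\det\nabla^2 X(t)|<\delta\}\to 0$ as $\delta\to 0$, which is exactly the assumed non-degeneracy of $\nabla^2 X(t)$; only then does it invoke Theorems 6.2 and 6.4. Your treatment of \eqref{eq:rice2} and \eqref{eq:rice3} (inserting the index indicator via continuous approximation and monotone convergence, then factorising the conditioning using the independence of $X(t)$ and $\nabla X(t)$ and of $\nabla X(t)$ and $\nabla^2 X(t)$) matches the paper's use of Theorem 6.4 and is fine once the Morse step is repaired; a minor additional remark is that the proposition assumes only stationarity, so you should justify the independence and the non-degeneracy of $\nabla X(t)$ by parity of the covariance derivatives at a single point rather than by Lemma \ref{lemme1}, which is stated in the isotropic setting.
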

Note that Lemma \ref{lemme1} implies that  the conditions  of Proposition  \ref{p:riceN} are met for a random field  satisfying (A2).
\begin{remark}
In the particular case $N=1$, the Kac-Rice formulas \eqref{eq:rice1}, \eqref{eq:rice2} and \eqref{eq:rice3} are always true (both sides are equal, finite or infinite) when $\cX$ is a zero-mean stationary Gaussian process
with $\cC ^1$ covariance function. This can be proved as in \cite{CL}. These conditions are weaker than those in Proposition \ref{p:riceN}.
\end{remark}

 \begin{proof}[Proof of Proposition \ref{p:riceN}]
We first consider the mean number of critical points.
We need to apply Theorem 6.2 of \cite {azais}. The conditions are clearly  verified in our case except for the condition (iv) :
 $$
 \P\{\exists   t \in S : \nabla X(t) =0, \det(\nabla^2 X (t)) =0\} =0.
$$
This  last condition is equivalent to the fact that the sample paths are a.s. Morse functions.
In  the case of stationary random fields  the simplest  is to use  Proposition 6.5 of \cite{azais} with condition b). Note that  because of stationarity,  by an extension argument, we can get rid of the compactness  condition. Since $\nabla X(t)$ and $\nabla^2 X(t)$ are independent, this condition is equivalent to 
$$
 \P\{|\det(\nabla^2 X(t))| <\delta \} \to 0 \mbox{ as } \delta \to 0.
 $$ which is equivalent  to 
 $$
  \P\{|\det(\nabla^2 X(t))| = 0\} =0.
$$
 This is implied by assuming the non degeneracy of $\nabla^2 X(t)$. \medskip
 
We consider now the case  of  Kac-Rice formula  for the  mean number of critical points with a given index and above  the level $u$ .  We have to apply Theorem 6.4 of \cite{azais}.  More precisely we have to write their formula (6.6)  with 
 $$
 g(t, Y^t) = g(\nabla^2 X(t) ,X(t)) ,
 $$
here the process $Y^t$  is simply the Gaussian vector $\nabla^2 X(t) ,X(t)$ which is Gaussian as required in Theorem 6.4 of \cite{azais}.
On the other hand,  $g$ is  the indicator of the index multiplied by the indicator  function that $X(t) >u$. It  is not  continuous  and we must apply   an approximation with a continuous function, followed by a monotone convergence argument.
Then, since we have independence between $X(t)$ and $\nabla X(t)$ and between $\nabla X(t)$ and $\nabla^2X(t)$ (see Lemma \ref{lemme1}), we obtain \eqref{eq:rice2} and \eqref{eq:rice3}.
\end{proof}

 For simplification we limit   our second proposition  to  the isotropic case 
 \begin{prop}\label{p:riceN2}
  Under Assumption  (A2),   the Kac-Rice formula of order 2  is valid  for a sufficiently small set $S$  in the sense that  if $S_1$ and $S_2$ are sufficiently small and sufficiently close  to each other,  
  \begin{multline}\label{eq:rice4}
 \E\left({\cN}^c(S_1){\cN}^c(S_2)\right)= \int _{S_1\times S_2}dt_1dt_2  p_{\nabla X(t_1) \nabla X(t_2) } (0,0)  \\  
\E \left( | \det(\nabla^2 X(t_1)) \det(\nabla^2 X(t_2))| / \nabla X(t_1) =\nabla X(t_2) =0\right)  
\end{multline}
\begin{multline}\label{eq:rice5}
  \E\left({\cN}_k^c(S_1){\cN}_{k'}^c(S_2)\right) 
  = \int _{S_1\times S_2}dt_1 dt_2 p_{\nabla X(t_1) \nabla X(t_2)    } (0,0)  \\  
   \E \left( | \det(\nabla^2 X(t_1)) \1_{i(\nabla^2 X(t_1)) =k} \det(\nabla^2 X(t_2))\1_{i(\nabla^2 X(t_2)) =k'}| / \nabla X(t_1) =\nabla X(t_2) =0\right)  
  \end{multline}
\end{prop}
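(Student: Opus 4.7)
The plan is to apply a general second-order Kac-Rice theorem from \cite{azais} (the two-point analog of the Theorem 6.2 invoked in the proof of Proposition \ref{p:riceN}) to the Gaussian vector field $\nabla X$. Under Assumption (A$2$), $X$ is $C^2$ so $\nabla X$ is $C^1$, which handles the regularity hypothesis, and the a.s. Morse property established in Proposition \ref{p:riceN} extends trivially to pairs of distinct points. What remains is to verify the non-degeneracy of the joint Gaussian law of $(\nabla X(t_1), \nabla X(t_2))$ for $t_1\neq t_2$, together with an integrability condition near the diagonal.

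For the non-degeneracy, by stationarity and isotropy it suffices to study $\det \Var(\nabla X(0), \nabla X(\rho e_1))$ for small $\rho>0$. Equation \eqref{eqdetvar} of Lemma \ref{lemme2} gives
$$
\det \Var(\nabla X(0), \nabla X(\rho e_1)) \simeq \rho^{2N}\frac{\lambda_2^N\lambda_4^N}{3^{N-1}}>0,
$$
so the covariance is invertible, and uniformly so in $(t_1,t_2)$ with $t_1\neq t_2$, provided $S_1,S_2$ are small enough and sufficiently close to each other; this is precisely the role of the smallness/proximity assumption. Formula \eqref{eq:rice4} then follows directly. For the indexed formula \eqref{eq:rice5}, I would follow the same strategy as in the proof of Proposition \ref{p:riceN}: the map $(M_1,M_2)\mapsto |\det M_1 \det M_2|\1_{i(M_1)=k}\1_{i(M_2)=k'}$ is bounded but discontinuous on pairs of symmetric matrices, so I would approximate it monotonically by bounded continuous functions, apply the theorem at each step, and pass to the limit by monotone/dominated convergence. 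The independence and non-degeneracy obtained from Lemma \ref{lemme1} ensure that the joint conditional law of $(\nabla^2 X(t_1),\nabla^2 X(t_2))$ given $\nabla X(t_1)=\nabla X(t_2)=0$ has a density, so the index-change boundary has zero conditional measure.

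The main obstacle is the integrability of the integrand near the diagonal. The density $p_{\nabla X(t_1),\nabla X(t_2)}(0,0)$ blows up like $\rho^{-N}$ by \eqref{eqdetvar}, so one must show that the conditional expectation of $|\det \nabla^2 X(t_1)\det \nabla^2 X(t_2)|$ decays sufficiently. By the same first-row expansion used in the proof of Theorem \ref{t1}, conditional on the vanishing of the gradients the entries $\xi_{11}(t)$ and $\xi_{1j}(t)$ are of order $\rho$ (cf. \eqref{eqvarxi1j}), so each determinant is $O_p(\rho)$ and the conditional expectation is $O(\rho^2)$. The combined integrand is therefore $O(\rho^{2-N})$, and the polar change of variables $t_2=t_1+\rho\omega$ contributes a Jacobian $\rho^{N-1}$, yielding a radial integrand of order $\rho$ that is integrable at the origin. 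Hence the right-hand sides of \eqref{eq:rice4}--\eqref{eq:rice5} are finite for $S_1,S_2$ sufficiently small and close, and the formulas hold.
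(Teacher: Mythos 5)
Your core argument follows the paper's own route: the paper also proves this by invoking the second-order Kac--Rice theorem of \cite{azais} (Theorem 6.3, and Theorem 6.4 plus a continuous approximation and monotone convergence for the indexed version \eqref{eq:rice5}), the only new hypothesis beyond Proposition \ref{p:riceN} being the non-degeneracy of the joint law of $\bigl(\nabla X(t_1),\nabla X(t_2)\bigr)$ for $t_1\neq t_2$ close together. Your verification of that non-degeneracy via \eqref{eqdetvar} is legitimate and not circular (its proof uses only Trick \ref{trick1} and Lemma \ref{lemme1}, hence only $\lambda_2,\lambda_4$, available under (A2)); the paper does the same computation in a more elementary packaging, namely the coordinatewise independence of the pairs $\bigl(X_i(0),X_i(\rho e_1)\bigr)$ together with a Taylor expansion of $-2\mathbf r'(\rho^2)$ and $-2\mathbf r'(\rho^2)-4\rho^2\mathbf r''(\rho^2)$ showing the correlation cannot equal $1$ for small $\rho>0$.

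Where you deviate is the final paragraph on integrability near the diagonal, and this part is both unnecessary and not sound under (A2). It is unnecessary because the cited second-order Kac--Rice theorem asserts the equality with both sides possibly infinite, so no integrability of the integrand is needed for \eqref{eq:rice4}--\eqref{eq:rice5} to hold; the statement of the proposition does not claim finiteness (the paper only deduces finiteness of the second moment later, from Theorem \ref{t1}, under (A3)). It is not sound under (A2) because your bound on the conditional expectation rests on \eqref{eqvarxi1j}, whose asymptotics involve $\lambda_6$ and thus require (A3), and because passing from ``each determinant is $O_p(\rho)$'' to ``the conditional expectation of the product is $O(\rho^2)$'' needs a domination argument (the paper supplies one, via the Borell--Sudakov--Tsirelson theorem, only in the proof of Theorem \ref{t1} and again under (A3)). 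Deleting that paragraph leaves a proof that is essentially the paper's.
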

 \begin{proof}
 We need to apply Theorem 6.3 from \cite{azais}. With respect to  Proposition \ref{p:riceN},   we need in addition the  distribution of $ \nabla X(0), \nabla X(\rho e_1)$ to be non degenerated in $\R^{2N}$   
   (Condition  $(iii')$ of \cite{azais}). 
  From section \ref{s:gradient} we have
 \begin{align*}
\Var \left(\nabla X(0)\right)&=\Var \left(\nabla X(\rho e_1)\right)=\lambda_2 Id_N\\
\E \left(\nabla X(0) \nabla X(\rho e_1)^T\right) &= \mbox{diag}\Big( -2{\mathbf{r'}}(\rho^2) -4 \rho^2 {\mathbf{r''}}(\rho^2),-2{\mathbf{r'}}(\rho^2),\ldots, -2{\mathbf{r'}}(\rho^2) \Big),
\end{align*}
which implies  that the $N$ vectors $\big(X_i(0),X_i(\rho e_1) \big)$, $i=1,\ldots,N$ are independent. It remains to check  that the correlation between $X_i(0)$ and $X_i(\rho e_1)$ cannot be 1. This is done by the Taylor expansions
\begin{align*}
-2{\mathbf{r'}}(\rho^2) &= -2{\mathbf{r'}}(0)- 2\rho^2{\mathbf{r''}}(0) + o(\rho^2)\\
-2{\mathbf{r'}}(\rho^2)-4 \rho^2 {\mathbf{r''}}(\rho^2) &= -2{\mathbf{r'}}(0) -6 \rho^2{\mathbf{r''}}(0) + o(\rho^2).
\end{align*}
Since by hypothesis $ 0<{\mathbf{r''}}(0) = \lambda_4/12 <+\infty$  the result follows.

 We consider now the case of Kac-Rice formula  for the  mean number of critical points with a given index.  We proceed  using Theorem 6.4 of \cite{azais} as in the preceding case.
   \end{proof}

\section{Proofs of lemmas} \label{s:proofs}
%In this appendix  we consider a random field  $\cX$ satisfying  at least  assumption 
%$(A2)$ and satisfying $(A3)$ or $(A4)$ when it is necessary to define  the considered quantities 
%In this section we obtain some general results about unconditional and conditional joint distributions of derivatives of $\cX$. They will be used in the next sections to obtain expressions
%for the mean number of critical points and the correlation functions between critical points.

\subsection{Proof of Lemma \ref{l:moments}}
We start by proving \eqref{eq0:lemme1}.
The spectral moment  of order $2p$ has the following expressions
\begin{equation}\label{e:lamb}
\lambda_{2p}   =                                                                                                                                                                                                                                                                                                                                                                                                          \int _{\R^N} \langle e_1, \lambda\rangle^{2p}   dF(\lambda) = \int_0^{+\infty}   x^{2p} dG(x)\int_{S^{N-1}}  \big( \langle e_1, u\rangle
 \big)^{2p}d \sigma(u),
\end{equation}
where $F$ is the spectral measure; $\sigma$ is the uniform probability on the sphere $S^{N-1}$ and $G$  is a measure  deduced from $F$  by change in polar coordinates.
Using the fact that a standard normal variable  is the independent product  of a  uniform variable on the sphere 
 by a $\chi(N)$ distribution and the fact that those two  distributions have well known moments, we obtain
 $$
 \mu_{2p}:=
 \int_{S^{N-1}} \big(  \langle e_1, u\rangle \big)^{2p}
d \sigma(u) =  \frac{(2p-1) !! \Gamma(N/2)}{2^p\Gamma(N/2+p) }= \frac{\Gamma(N/2) \Gamma(1/2+p)}
{\sqrt{\pi}\Gamma(N/2+p) }.
$$
We have
\begin{equation*}
\mu_{2n}\mu_{2n-4}=\frac{(2n-4+N)(2n-1)}{(2n-2+N)(2n-3)}\mu_{2n-2}^2.
\end{equation*}
Using Cauchy-Schwarz inequality we conclude that
\begin{align*}
\lambda_{2n} \lambda_{2n-4} &= \mu_{2n}\mu_{2n-4} \int_0^{+\infty} x^{2n}  dG(x)\int_0^{+\infty} y^{2n-4}  dG(y) \\
&= K(n,N) \mu_{2n-2}^2 \int_0^{+\infty} x^{2n}  dG(x)\int_0^{+\infty} y^{2n-4}  dG(y) \\
 &\geq  K(n,N) \mu_{2n-2}^2
 \int_0^{+\infty} x^{2n-2}  dG(x)\int_0^{+\infty} y^{2n-2} dG(y)  = K(n,N) \lambda_{2n-2}^2
\end{align*}
giving \eqref{eq0:lemme1}. Now let us prove \eqref{eq2:lemme1}. By  \eqref{e:lamb},  see also \cite{schoenberg} and \cite{yaglom}, $\mathbf r(\|s-t\|^2)$ can be written as
\begin{align}\nonumber
\mathbf r(\|s-t\|^2)
&=\int_0^{+\infty}        dG(x)      \int_{S^{N-1}} \cos \Big(  \big( \langle  \|s-t\|e_1, xu\rangle
 \big) \Big) d \sigma(u) \\ 
&=\int_0^{+\infty}  
 \sum_{k=0}^{\infty}(-1)^k\frac{x^{2k}\left(\|s-t\|^2\right)^k}{2^{2k+(N-2)/2}k!\Gamma(k+1+(N-2)/2)}dG(x). \label{cov}
\end{align}
Thus setting   $\displaystyle |\beta|=\frac{|i|+|j|}{2}$ and $\displaystyle \beta_k=\frac{i_k+j_k}{2}$ for $k=1,\ldots,N$, we have
\begin{multline*}
\left. \frac {\partial^{|i|+|j|} \mathbf r(\|s-t\|^2)}{\partial s_1^{i_1}, \ldots,\partial s_N^{i_N}    \partial t_1^{j_1}, \ldots,\partial t_N^{j_N}   }\right|_{s=t}
=\int_0^{+\infty}\frac{(-1)^{|\beta|}x^{2|\beta|}}{2^{2|\beta|+(N-2)/2}|\beta|!\Gamma(|\beta|+1+(N-2)/2)}\\
\times \left. \frac {\partial^{|i|+|j|}\left(\|s-t\|^2\right)^{|\beta|}}
{\partial s_1^{i_1}, \ldots,\partial s_N^{i_N}    \partial t_1^{j_1}, \ldots,\partial t_N^{j_N}   }\right|_{s=t}dG(x).
\end{multline*}
By the multinomial formula
\begin{equation} \label{multinomial}
\left(\|s-t\|^2\right)^{|\beta|}=\sum_{k_1+\ldots+k_N=|\beta|}\frac{|\beta|!}{k_1!\times \ldots \times k_N!}\left((s_1-t_1)^2\right)^{k_1}\times \ldots \times \left((s_N-t_N)^2\right)^{k_N}.
\end{equation}
Thus
$$
\left. \frac {\partial^{|i|+|j|}\left(\|s-t\|^2\right)^{|\beta|}}
{\partial s_1^{i_1}, \ldots,\partial s_N^{i_N}    \partial t_1^{j_1}, \ldots,\partial t_N^{j_N}   }\right|_{s=t}=(-1)^{|j|}|\beta|!\prod_{k=1}^N\frac{(2\beta_k)!}{\beta_k!},
$$
since the only term in the sum in the right side of \eqref{multinomial} whose
derivative is not null is the one for which $\displaystyle k_1=\frac{i_1+j_1}{2},\ldots,k_N=\frac{i_N+j_N}{2}$. 
We obtain
\begin{multline*}
\left. \frac {\partial^{|i|+|j|} \mathbf r(\|s-t\|^2)}{\partial s_1^{i_1}, \ldots,\partial s_N^{i_N}    \partial t_1^{j_1}, \ldots,\partial t_N^{j_N}   }\right|_{s=t}
=\frac{(-1)^{|\beta|}(-1)^{|j|}|\beta|!}{2^{2|\beta|+(N-2)/2}|\beta|!\Gamma(|\beta|+1+(N-2)/2)}\\
\times\prod_{k=1}^N\frac{(2\beta_k)!}{\beta_k!}\int_0^{+\infty} x^{2|\beta|}dG(x).
\end{multline*}
Furthermore, using \eqref{cov}, we have
$$
\mathbf r^{(|\beta|)}(0)=\frac{(-1)^{|\beta|}|\beta|!}{2^{2|\beta|+(N-2)/2}|\beta|!\Gamma(|\beta|+1+(N-2)/2)}\int_0^{+\infty}x^{2|\beta|}dG(x).
$$
That gives the first equality in \eqref{eq2:lemme1}. The second equality is obtained using \eqref{spectralmoments}.

\subsection{Proof of Lemma \ref{lemmeconditionaldistribution}}
%\subsection{Conditional distribution of $\nabla^2X(0), \nabla^2 X(t)$ given $\nabla X(0)=\nabla X(t)=0$}
  
 \noindent We give the steps for the computation of the conditional variance of $ \nabla^2X(0)$,  $\nabla^2X(t)$ given $\nabla X(0)=\nabla X(t)=0$ with $t=\rho e_1$. Some tedious but easy calculations are not detailed.
$\mathbf r(\rho^2)$, $\mathbf r'(\rho^2)$, $\mathbf r''(\rho^2)$, $\mathbf r'''(\rho^2)$ and $\mathbf r^{(4)}(\rho^2)$ are denoted  by $\mathbf r$, $\mathbf r'$, $\mathbf r''$, $\mathbf r'''$ and $\mathbf r^{(4)}$ for short. 
% $J_{n,m}$  is the "all-one" matrix of size $n, m$ : $(J_{n,m})_{i,j} \equiv 1$.
%The elements of the gradient $\nabla X(t)$ are denoted $ X_i(t)$, $i=1,\ldots,N$. Those of the Hessian $\nabla^2 X(t)$ are denoted $X_{ij}(t)$, $i,j=1,\ldots,N$.
The results below extend Lemma \ref{l:moments} to two times. 
\subsubsection{Unconditional distribution}

\paragraph{Gradient} \label{s:gradient}
$$
 \Var(\nabla X(t))=\Var(\nabla X(0)) = -2\mathbf r '(0) Id_N,
$$
$$
 \Cov(X_1(0),X_1(t)) = -2\mathbf r '-4\rho^2 \mathbf r '' \mbox{ and }\Cov(X_i(0),X_i(t)) = -2\mathbf r ' \mbox{ for }  i\neq 1 .
$$
Any other covariance is zero.
\paragraph{Hessian} \label{s:hessien}
\noindent We define $X''_d(t)$ as $(X_{11}(t),\ldots,X_{NN}(t))$ and $X''_u(t)$ as 
$\{ X_{ij}(t), 1\leq i<j\leq N\}$. These two vectors are independent and 
\begin{align*}
 \Var(X''_d(0))  &= 4 \mathbf r ''(0)  ( 2 Id_N + J_{N,N}),\\
 \Cov(X''_d(0),X''_d(t)) &= 
   \left(\begin{array}{cc} 12  \mathbf r ''  + 48 \rho^2 \mathbf r ''' + 16 \rho^4 r^{(4)}&  (4\mathbf r '' + 8 \rho^2 \mathbf r ''') J_{1,(N-1)} \\
  (4\mathbf r '' + 8 \rho^2 \mathbf r ''') J_{(N-1),1}
  &    4 \mathbf r ''( 2 Id_{N-1}+ J_{(N-1),(N-1)})   \end{array}\right),
  \\
\Var(X''_u(0))  &  =  4\mathbf r ''(0) Id_{N(N-1)/2},\\
 \Cov(X''_u(0),X''_u(t)) &= \mbox{diag} \left((4 \mathbf r '' + 8 \rho^2 \mathbf r ''')Id_{N-1}, 4 \mathbf r '' Id_{(N-1)(N-2)/2}\right).
 \end{align*}
 
\paragraph{Relation between gradient and Hessian} \label{s:gradient_hessian}

\begin{align*}
&\E\big(X_{11}(0) \big/ X_{1}(0), X_1(t)\big)\\
 &=\frac{1}{4\mathbf r '(0)^2-(2\mathbf r '+4\rho^2\mathbf r '')^2}  (0,12 \rho \mathbf r '' +8\rho^3 \mathbf r ''') \left(\begin{array}{cc}-2\mathbf r '(0) & 2\mathbf r '+ 4\rho^2\mathbf r '' \\2\mathbf r '+4\rho^2\mathbf r '' & -2\mathbf r '(0)\end{array}\right)  \left(\begin{array}{c}X_1(0) \\X_1(t)\end{array}\right) \\
 &=(12 \rho \mathbf r '' +8\rho^3 \mathbf r ''') \frac{(2\mathbf r '+4\rho^2\mathbf r '')X_1(0) -2\mathbf r '(0)X_1(t)}{4\mathbf r '(0)^2-(2\mathbf r '+4\rho^2\mathbf r '')^2}\\
 &=: (12 \rho \mathbf r '' +8\rho^3 \mathbf r ''') K_1(t).
\end{align*}
\noindent For $i\neq 1$
\begin{align*}
\E\big(X_{1i}(0) \big/ X_i(0), X_i(t)\big)
 & =\frac{1}{4\mathbf r '(0)^2-4(\mathbf r ')^2} (0,4\rho \mathbf r '' ) \left(\begin{array}{cc}-2\mathbf r '(0) & 2\mathbf r ' \\2\mathbf r ' & -2\mathbf r '(0)\end{array}\right)  \left(\begin{array}{c}X_i(0) \\X_i(t)\end{array}\right) \\
 & = 4 \rho \mathbf r ''\frac{2\mathbf r 'X_i(0)-2\mathbf r '(0)X_i(t)}{4\mathbf r '(0)^2-4(\mathbf r ')^2}=: 4 \rho \mathbf r '' K_i(t).\\
\E\big(X_{ii}(0) \big/ X_1(0), X_1(t)\big) &= 4\rho \mathbf r '' K_1(t). 
\end{align*}
We have equivalent formulas, reversing time, for example: 
\begin{align*}
\E(X_{11}(t)/X_1(t),X_1(0)) &= (12 \rho \mathbf r '' +8\rho^3 \mathbf r ''') \frac{(-(2\mathbf r '+4\rho^2\mathbf r '')X_1(t) +2\mathbf r '(0)X_1(0))}{4(\mathbf r '(0))^2-(2\mathbf r '+4\rho^2\mathbf r '')^2}\\
& =:(12 \rho \mathbf r '' +8\rho^3 \mathbf r ''')  \bar K_1(t). 
\end{align*}
Any other case corresponds to independence between gradient and Hessian.  
   
\subsubsection{Conditional distribution}

\noindent Since the conditional variance-covariance matrix   is equal to the unconditional 
 variance-covariance matrix diminished by the variance of the conditional expectation,
  we compute  this last term only.\\
\noindent Let us consider  for example
 the two terms $X_{11} (0)$ and $X_{11} (t)$ given $\nabla X(0)=\nabla X(t)=0$. The $2\times 2$ matrix to subtract is $\displaystyle (12 \rho \mathbf r '' +8\rho^3 \mathbf r ''')^2\Var(K_1(t),\bar K_1(t))$ with
 \begin{equation*}
 \Var(K_1(t),\bar K_1(t))=\frac{1}{4(\mathbf r '(0))^2-(2\mathbf r '+4\rho^2\mathbf r '')^2}\left(\begin{array}{cc}
   -2\mathbf r '(0) &   -(2\mathbf r '+4\rho^2\mathbf r '')   \\  
-(2\mathbf r '+4\rho^2\mathbf r '')  & -2\mathbf r '(0)
   \end{array}\right).
\end{equation*}
In the same way  for $i \neq 1$ :
 \begin{equation*}
\Var(K_i(t);\bar K_i(t)) =\frac{1}{4\mathbf r '(0)^2-4(\mathbf r ')^2}\left(\begin{array}{cc}
   -2\mathbf r '(0) &  -2\mathbf r ' \\  
 -2\mathbf r ' & -2\mathbf r '(0)
   \end{array}\right).
   \end{equation*}
Giving the extra term to substract to get Lemma \ref{lemmeconditionaldistribution}.

 \subsection{Proof of Lemma \ref{lemme2}}

First note that \eqref{zero1} and \eqref{zero} are deduced from Lemma \ref{lemmeconditionaldistribution}.
%\subsubsection{Proof of \eqref{eqdetvar}} \label{proofdetvar}~ \newline

\subsubsection{Proof of \eqref{eqvarxi1j}, \eqref{eqvar2} and \eqref{determinant1}}
\noindent We now consider  the case of $\xi_{11}(t)$. Using Trick \ref{trick1} we have
\begin{align*}
\Var\left(\xi_{11}(t)\right)=&\frac{\det (\Var(X_1(0),\ldots,X_N(0),X_{11}(0),X_1(t),\ldots,X_N(t)))}{\det\left(\Var\left(\nabla X(0),\nabla X(t)\right)\right)}\\
\simeq &\det (\Var(X_1(0),\ldots,X_N(0),X_{11}(0),X_1(t)-X_1(0)-\rho X_{11}(0),\\
&X_2(t)-X_2(0),\ldots,X_N(t)-X_N(0)))/\det\left(\Var\left(\nabla X(0),\nabla X(t)\right)\right)\\
\simeq &\frac{\rho^2 3^{N-1}}{4\lambda_2^N\lambda_4^N} \det (\Var (X_1(0),\ldots,X_N(0),X_{11}(0),\ldots,X_{1N}(0),X_{111}(0) )).
\end{align*}
By  Lemma \ref{lemme1} we obtain $\displaystyle \Var\left(\xi_{11}(t)\right) \simeq \frac{\rho^2}{4}\frac{(\lambda_2\lambda_6-\lambda_4^2)}{\lambda_2}$.  The other variances in \eqref{eqvarxi1j} and \eqref{eqvar2}
and the determinant \eqref{determinant1}
are obtained in the same way. 
\subsubsection{Proof of \eqref{eqvarxijk}, \eqref{eqcov2} and \eqref{eqvarxi11} }   ~ \newline
$\bullet$ We consider $\Cov (\xi_{jj}(t),\xi_{kk}(t))$ for $j\neq k$ and $j,k\neq 1$. Since
$$
\Cov (\xi_{jj}(t),\xi_{kk}(t))=\frac{1}{2}\left(\Var \left(\xi_{jj}(t)+\xi_{kk}(t)\right)-\Var (\xi_{jj}(t))-\Var (\xi_{kk}(t))\right)
$$
and since, by \eqref{eqvarxi1j}, $\displaystyle \Var(\xi_{jj}(t))=\Var (\xi_{kk}(t))=\frac{8\lambda_4}{9}$, we just need to compute 
$$
\Var\left(\xi_{jj}(t)+\xi_{kk}(t)\right)=\frac{\det (\Var(X_{jj}(t)+X_{kk}(t),X_1(0),\ldots,X_N(0),X_1(t),\ldots,X_N(t)))}{\det\left(\Var\left(\nabla X(0),\nabla X(t)\right)\right)}.
$$
Using Trick \ref{trick1} we obtain
\begin{align*}
&\Var\left(\xi_{jj}(t)+\xi_{kk}(t)\right)\\
\simeq& \frac{\det (\Var(X_{jj}(0)+X_{kk}(0),X_1(0),\ldots,X_N(0),X_{11}(0),X_{12}(0),\ldots,X_{1N}(0)))}
{\det\left(\Var\left(X_1(0),\ldots,X_N(0),X_{11}(0),X_{12}(0),\ldots,X_{1N}(0)\right)\right)}.
\end{align*}
By  Lemma \ref{lemme1} we have
\begin{align*}
&\det (\Var(X_{jj}(0)+X_{kk}(0),X_1(0),\ldots,X_N(0),X_{11}(0),X_{12}(0),\ldots,X_{1N}(0)))\\
=&\left(\frac{\lambda_4}{3}\right)^{N-1}\lambda_2^N\det (\Var (X_{11}(0),X_{jj}(0)+X_{kk}(0))), 
\end{align*}
$$
\det (\Var (X_{11}(0),X_{jj}(0)+X_{kk}(0)))=\frac{20\lambda_4^2}{9}
$$
and
$$
\det\left(\Var\left(X_1(0),\ldots,X_N(0),X_{11}(0),X_{12}(0),\ldots,X_{1N}(0)\right)\right)=\frac{\lambda_2^N\lambda_4^N}{3^{N-1}}.
$$
Finally we obtain $\displaystyle \Var(\xi_{jj}(t)+\xi_{kk}(t)) \simeq \frac{20\lambda_4}{9}$, implying $ \displaystyle \Cov (\xi_{jj}(t),\xi_{kk}(t))\simeq \frac{2\lambda_4}{9}$.
In the same way we obtain$ \displaystyle \Cov (\xi_{jj}(0),\xi_{kk}(t))\simeq \frac{2\lambda_4}{9}$, $ \displaystyle \Cov (\xi_{jk}(0),\xi_{jk}(t))\simeq \Var(\xi_{jk}(t))$ for $j\neq k$
and $ \displaystyle \Cov (\xi_{1i}(0),\xi_{1i}(t))\simeq -\Var(\xi_{1i}(t))$ $\forall i\in \{1,\ldots,N\}$.\\
\\
\noindent $\bullet$ Now let us prove that $\displaystyle \Cov(\xi_{11}(t),\xi_{jj}(t))  =  \rho^2\frac{11\lambda_2\lambda_6-15\lambda_4^2}{180\lambda_2} + o(\rho^2)$ for $j\neq 1$. We have,
(see Lemma \ref{lemme1} and Sections \ref{s:hessien} and \ref{s:gradient_hessian})
\begin{align*}
\Cov(X_{jj}(0),X_i(0))&=0,\mbox{ }\forall i,j\in \{1,\ldots,N\}\\
\Cov(X_{jj}(0),X_i(t))=-\Cov(X_{jj}(t),X_i(0))&=4\rho \mathbf r ''(\rho^2)\delta_{1i}\mbox{ for }j\neq i,\\
\Cov(X_{ii}(0),X_i(t))=-\Cov(X_{ii}(t),X_i(0))&=(12\rho \mathbf r ''(\rho^2)+8\rho^3 \mathbf r '''(\rho^2))\delta_{1i} \mbox{ and}\\
\Cov(X_{11}(0),X_{jj}(t))&=4\mathbf r^{''}(\rho^2)+8\rho^2\mathbf r^{'''}(\rho^2).
\end{align*}
With Section \ref{s:gradient} we deduce
\begin{align*}
&\Var(\xi_{jj}(0))=\frac{\det (\Var(X_1(0),\ldots,X_N(0),X_1(t),\ldots,X_N(t),X_{jj}(0)))}{\det\left(\Var\left(\nabla X(0),\nabla X(t)\right)\right)}\\
&=\frac{\det (\Var(\nabla_{-(1)}X(0),\nabla_{-(1)}X(t)))\times \det (\Var(X_1(0),X_1(t),X_{jj}(0)))}{\det\left(\Var\left(\nabla X(0),\nabla X(t)\right)\right)},
\end{align*}
 where $\nabla_{-(1)}X$ denotes the gradient without its first coordinate.
Similarly
$$
\Var(\xi_{11}(0))=\frac{\det (\Var(\nabla_{-(1)}X(0),\nabla_{-(1)}X(t)))\times \det (\Var(X_1(0),X_1(t),X_{11}(0)))}{\det\left(\Var\left(\nabla X(0),\nabla X(t)\right)\right)}
$$
and
\begin{align*}
&\Var(\xi_{11}(0)+\xi_{jj}(0))=\\
&\frac{\det (\Var(\nabla_{-(1)}X(0),\nabla_{-(1)}X(t)))\times \det (\Var(X_1(0),X_1(t),X_{11}(0)+X_{jj}(0)))}{\det\left(\Var\left(\nabla X(0),\nabla X(t)\right)\right)}.
\end{align*}
Using Lemma \ref{lemme1} we can verify that
\begin{align*}
\det (\Var(X_1(0),X_1(t),X_{11}(0)+X_{jj}(0)))&=\frac{8}{3}\lambda_4\lambda_2^2-\lambda_2(\alpha_1+\alpha_2)^2-\frac{8}{3}\lambda_4\beta_1^2\\
\det (\Var(X_1(0),X_1(t),X_{11}(0)))&=\lambda_4\lambda_2^2-\lambda_2 \alpha_1^2-\lambda_4\beta_1^2\\
\det (\Var(X_1(0),X_1(t),X_{jj}(0)))&=\lambda_4\lambda_2^2-\lambda_2 \alpha_2^2-\lambda_4\beta_1^2,
\end{align*}
where $\beta_1=-2\mathbf r '(\rho^2)-4\rho^2\mathbf r ''(\rho^2)$, $\alpha_1=12\rho \mathbf r ''(\rho^2)+8\rho^3 \mathbf r '''(\rho^2)$ and $\alpha_2=4\rho \mathbf r ''(\rho^2)$.
We deduce
$$
\Cov(\xi_{11}(0),\xi_{jj}(0))=\frac{\det (\Var(\nabla_{-(1)}X(0),\nabla_{-(1)}X(t)))\times \left(\displaystyle \frac{1}{3}\lambda_4\lambda_2^2-\lambda_2\alpha_1\alpha_2-\frac{1}{3}\lambda_4\beta_1^2\right)}{\det\left(\Var\left(\nabla X(0),\nabla X(t)\right)\right)}.
$$
We can check that
$$
\displaystyle \frac{1}{3}\lambda_4\lambda_2^2-\lambda_2\alpha_1\alpha_2-\frac{1}{3}\lambda_4\beta_1^2 \simeq \rho^4 \left(\frac{11}{180}\lambda_2\lambda_4\lambda_6-\frac{\lambda_4^3}{12}\right). 
$$
We deduce, using Trick \ref{trick1} and Lemma \ref{lemme1}, that
$$
\Cov(\xi_{11}(0),\xi_{jj}(0))= \rho^2\frac{11\lambda_2\lambda_6-15\lambda_4^2}{180\lambda_2} +o(\rho^2).
$$
\noindent In the same way we prove $\displaystyle \Cov(\xi_{11}(0),\xi_{jj}(t)) = \rho^2\frac{15\lambda_4^2-7\lambda_2\lambda_6}{180\lambda_2} +o(\rho^2)$.


\begin{thebibliography}{4}

\bibitem{adler}
\textsc{Adler, R. J.} (1981).
\textit{The Geometry of Random Fields}.
Wiley, New York.


\bibitem{adlertaylor}
\textsc{Adler, R. J.} and \textsc{Taylor, J.} (2007).
\textit{Random Fields and Geometry}.
Springer, New York.



\bibitem{benarous}
\textsc{Auffinger, A.} and \textsc{Ben Arous, G.} (2013).
Complexity of random smooth functions on the high-dimensional sphere.
\textit{Ann. Probab.} 
\textbf{41(6)} 4214--4247.

\bibitem{benarous3}
\textsc{Auffinger, A.}, \textsc{Ben Arous, G.} and \textsc{\v{C}ern\'y, J.} (2013).
Random matrices and complexity of spin glasses.
\textit{Comm. Pure Appl. Math.} 
\textbf{66(2)} 165--201.

\bibitem{azais1}
\textsc{Aza\"is, J. M.}, \textsc{Cierco-Ayrolles, C.} and \textsc{Croquette, A.} (1999).
Bounds and asymptotic expansions for the distribution of the maximum of a smooth stationary Gaussian process.
\textit{ESAIM: Probab. Statist.}
\textbf{3} 107--129.

%\bibitem{delmas}
%\textsc{Aza\"is, J. M.} and \textsc{Delmas, C.} (2002).
%Asymptotic expansions for the distribution of the maximum of Gaussian random fields.
%\textit{Extremes.}
%\textbf{5(2)} 181--212.

\bibitem{Al}
\textsc{Aza\"is, J. M.} and \textsc{Le\'on, J. R.} (2019).
Necessary and sufficient conditions for the finiteness of the second moment of the measure of level sets.
\textit{arXiv:1905.12342}

\bibitem{Alo}
\textsc{Aza\"is, J. M.}, \textsc{Le\'on, J. R.} and \textsc{Ortega, J.} (2005).
Geometrical characteristic of Gaussian sea waves.
\textit{J. Appl. Probab.}
\textbf{42} 407--425.

\bibitem{AW3}
\textsc{Aza\"is, J. M.} and \textsc{Wschebor, M.} (2008)
A general expression for the distribution of the maximum of a Gaussian field and the approximation of the tail.
\textit{Stoch. Process. Appl.}
\textbf{118} 1190--1218.

\bibitem{azais}
\textsc{Aza\"is, J. M.} and \textsc{Wschebor, M.} (2009).
\textit{Level Sets and Extrema of Random Processes and Fields}.
John Wiley \& Sons, Inc., Hoboken, New Jersey.


\bibitem{AW2}
\textsc{Aza\"is, J. M.} and \textsc{Wschebor, M.} (2010)
Erratum to: A general expression for the distribution of the maximum of a Gaussian field and the approximation of the tail [Stochastic Process. Appl. 118 (7)(2008) 1190–1218].
\textit{Stoch. Process. Appl.}
\textbf{120} 2100--2101.



\bibitem{BCW}
\textsc{Beliaev, D.}, \textsc{Cammarota, V.} and \textsc{Wigman, I.} (2019a).
Two Point Function for Critical Points of a Random Plane Wave.
\textit{International Mathematics Research Notices}
\textbf{9} 2661--2689.

\bibitem{BCW2}
\textsc{Beliaev, D.}, \textsc{Cammarota, V.} and \textsc{Wigman, I.} (2019b).
No repulsion between critical points for planar Gaussian random fields.
\textit{arXiv:1704.04943}


%\bibitem{belyaev}
%\textsc{Belyaev, Y. K.} (1966).
%On the number of intersections of a level by a gaussian stochastic process.
%\textit{Theory Probab. Appl.}
%\textbf{11(1)} 106--113.


\bibitem{benarous2}
\textsc{Ben Arous, G.}, \textsc{Mei, S.}, \textsc{Montanari, A.} and \textsc{Nica, M.} (2019).
The Landscape of the Spiked Tensor Model.
\textit{Communications on Pure and Applied Mathematics} 
\textit{https://doi.org/10.1002/cpa.21861 }



\bibitem{berry}
\textsc{Berry, M. V.} and \textsc{Dennis, M. R.} (2000).
Phase singularities in isotropic random waves.
\textit{Proc. R. Soc. Lond. A}
\textbf{456} 2059--2079.

\bibitem{bruijn}
\textsc{de Bruijn, N. G.} (1955).
On some multiple integrals involving determinants.
\textit{J. Indian Math. Soc.}
\textbf{19}, 133--151.




%\bibitem{cammarota}
%\textsc{Cammarota, V.}, \textsc{Marinucci, D.} and \textsc{Wigman, I.} (2016).
%On the distribution of the critical values of random spherical harmonics.
%\textit{J. Geom. Anal.},
%\textbf{26(4)} 3252--3324.


\bibitem{cheng3}
\textsc{Cheng, D.} and \textsc{Schwartzman, A.} (2017).
Multiple testing of local maxima for detection of peaks in random fields.
\textit{Ann. Stat.}
\textbf{45} 529--556.



\bibitem{cheng2}
\textsc{Cheng, D.} and \textsc{Schwartzman, A.} (2018).
Expected number and height distribution of critical points of smooth isotropic Gaussian random fields.
\textit{Bernoulli}
\textbf{24(4B)} 3422--3446.



\bibitem{cheng}
\textsc{Cheng, D.} and \textsc{Xiao, Y.} (2016).
The mean Euler characteristic and excursion probability of Gaussian random fields with stationary increments.
\textit{Ann. Appl. Probab.}
\textbf{26(2)} 722--759.



\bibitem{ginsbourger}
\textsc{Chevalier, C.} and \textsc{Ginsbourger, D.} (2013). 
Fast computation of the multi-points expected improvement with applications in batch selection. 
\textit{International Conference on Learning and Intelligent Optimization}.
Springer, Berlin, Heidelberg.

\bibitem{chiani}
\textsc{Chiani, M.} (2014). 
Distribution of the largest eigenvalue for real Wishart and Gaussian random matrices and a simple approximation for the Tracy-Widom distribution. 
\textit{J. Multivariate Anal.}
\textbf{129} 69--81.


\bibitem{CL}
\textsc{Cramér, H.} and \textsc{Leadbetter, M. R.} (1967).
\textit{Stationary and Related Stochastic Processes}.
Wiley, New York.

\bibitem{estrade}  
\textsc{Estrade, A.} and \textsc{Fournier, J.} (2016).
Number of critical points of a Gaussian random field: Condition for a finite variance.
\textit{Statist. Probab. Lett.}
\textbf{118} 94--99.




%\bibitem{fyodorov}
%\textsc{Fyodorov, Y.} (2004).
%Complexity of random energy landscapes, glass transition and absolute value of spectral determinant of random matrices.
%\textit{Phys. Rev. Lett.}
%\textbf{92} 240601.
%Erratum, ibid.
%\textbf{93} 149901.
%

\bibitem{hough}
\textsc{Hough, J. B.}, \textsc{Krishnapur, M.}, \textsc{Peres, Y.} and \textsc{Virag, B.} (2006).
Determinantal processes and independence. 
\textit{Probab. Surveys}
\textbf{3} 206-229.

\bibitem{longuet}
\textsc{Longuet-Higgins, M. S.} (1957).
The statistical analysis of a random moving surface. 
\textit{Phil. Trans. R. Soc. London A}
\textbf{249} 321-387.


\bibitem{mehta}
\textsc{Mehta, M. L.} (2004).
\textit{Random Matrices}.
3rd ed. Academic Press, San Diego, CA.


\bibitem{mehta1}
\textsc{Mehta, M. L.} and \textsc{Normand, J. M.} (2001). 
Moments of the characteristic polynomial in the three ensembles of random matrices.
\textit{J. Phys. A: Math. Gen.}
\textbf{34} 4627--4639.


\bibitem{nicolaescu}
\textsc{Nicolaescu, L. I.} (2017).
A CLT concerning critical points of random functions on a Euclidean space.
\textit{Stoch. Process. Appl.},
\textbf{127(10)} 3412--3446.



%\bibitem{orouke}
%O'ROUKE, S. (2010). 
%\newblock{Gaussian Fluctuations of Eigenvalues in Wigner Random Matrices}.
%\newblock{\em J Stat Phys},
%\newblock{\bf 138}, 1045--1066.
\bibitem{rychlik}
\textsc{Podg\'orski, K.}, \textsc{Rychlik, I.} and \textsc{Sj\"o, E.} (2000).
Statistics for velocities of Gaussian waves.
\textit{Internat. J. Offshore Polar Eng.}
\textbf{10} 91--98.


\bibitem{ros}
\textsc{Ros, V.}, \textsc{Ben Arous, G.}, \textsc{Biroli, G.} and \textsc{Cammarota, C.} (2019).
Complex energy landscapes in spiked-tensor and simple glassy models: Ruggedness, arrangements of local minima, and phase transitions.
\textit{Phys. Rev. X}
\textbf{9}, 011003.


\bibitem{searle}
\textsc{Searle, S. R.} (1971). 
\textit{Linear Models}.
John Wiley \& Sons.


\bibitem{schoenberg}
\textsc{Schoenberg, I. J.} (1938).
Metric spaces and completely monotone functions.
\textit{Ann. of Math.}
\textbf{39} 811--841.


\bibitem{taylor}
\textsc{Taylor, J. E.} and \textsc{Worsley, K. J.} (2007).
Detecting sparse signals in random fields, with an application to brain mapping.
\textit{J. Am. Statist. Assoc.}
\textbf{102} 913--928.

\bibitem{worsley1}
\textsc{Worsley, K. J.}, \textsc{Marrett, S.}, \textsc{Neelin, P.} and \textsc{Evans, A. C.} (1996).
Searching scale space for activation in PET images.
\textit{Human Brain Mapping}
\textbf{4} 74--90.

\bibitem{worsley2}
\textsc{Worsley, K. J.}, \textsc{Marrett, S.}, \textsc{Neelin, P.}, \textsc{Vandal, A. C.}, \textsc{Friston, K.J.} and \textsc{Evans, A. C.} (1996).
A unified statistical approach for determining significant signals in images of cerebral activation.
\textit{Human Brain Mapping}
\textbf{4} 58--73.

\bibitem{yaglom}
\textsc{Yaglom, A. M.} (1957).
Some classes of random fields in $n$-dimensional space, related to stationary random processes.
\textit{Theor. Probability Appl.}
\textbf{2} 273--320.


\end{thebibliography}
\end{document}